\title{A pasting theorem for iterated Segal spaces}
\author{Jaco Ruit}
\address{Mathematisch Instituut, Universiteit Utrecht, The Netherlands}
\email{j.c.ruit@uu.nl}
\newtheorem{theorem}{Theorem}
\newcommand{\jnewtheorem}[2]{
	\newaliascnt{#1}{theorem}
	\newtheorem{#1}[#1]{#2}
	\aliascntresetthe{#1}
}
\numberwithin{theorem}{section}
\newtheorem{thmintro}{Theorem}
\theoremstyle{definition}
\newcommand{\N}{\mathbb{N}}
\newcommand{\cat}[1]{\textup{#1}}
\newcommand{\op}{\mathrm{op}}
\newcommand{\im}{\mathrm{im}}
\newcommand{\map}{\mathrm{Map}}
\DeclareMathOperator{\colim}{\mathrm{colim}}
\newcommand{\fun}{\mathrm{Fun}}
\newcommand{\Cat}{\cat{Cat}}
\renewcommand{\S}{\mathscr{S}}
\newcommand{\C}{\mathscr{C}}
\renewcommand{\D}{\mathscr{D}}
\newcommand{\Seg}{\mathrm{Seg}}
\newcommand{\Hom}{\mathrm{Hom}}
\newcommand{\Sp}{\mathrm{Sp}}
\newcommand{\Shape}{\cat{Shape}}
\newcommand{\dtr}{\mathrm{tr}}
\definecolor{pamblue}{rgb}{.78, .90, .98}
\begin{document}

\begin{abstract}
We introduce a novel notion of pasting shapes for iterated Segal spaces which classify particular arrangements of composing cells in $d$-uple Segal spaces. Using this formalism,  we then continue 
to prove a pasting theorem for these iterated Segal spaces. 
\end{abstract} 

\maketitle

\setcounter{tocdepth}{1}
\tableofcontents

\section{Introduction} 

A $d$-uple category is a generalization of a $d$-category, introduced by Ehresmann \cite{Ehresmann},  which has $k$-cells in $d\choose k$ different directions for $0\leq k \leq d$.
In such a structure, the compatible $k$-cells that point in the same direction 
may be composed, while cells with different directions may be related by $(k+1)$-cells. For $d=2$ and $d=3$, these are also known as \textit{double categories} \cite[Definition 10]{Ehresmann} and \textit{intercategories} \cite[Section 1]{GrandisPare2}, respectively.
These $d$-uple categories are the natural place to study mathematical structures that allow for multiple sorts of morphisms (1-cells) between them. 

For instance, in the realm of algebra, one can consider not only the usual maps between rings but also bimodules between rings. These are the two 
directions of 1-cells in the 
\textit{Morita double category}  (\cite[Example 2.3]{Shulman}). Since its introduction, the theory of double categories has found a wide range of applications throughout category theory. 
For instance, the theory of 2-categorical limits admits a neat description using double categories \cite{GrandisPare}.
Pseudo 2-functors that form a \textit{proarrow equipment} are better understood as being double categories with additional properties \cite{Verity}, \cite{Shulman2}. 
These proarrow equipments give rise to formal category theories. 
In \cite{Shulman}, we see an application of the theory of double categories to homotopy theory.

Whereas in a usual category,  
one may consider strings of compatible morphisms and take their composites, there are now many configurations of compatible cells in a $d$-uple category. 
For instance, one may start with compatible 2-cells
$v_1, v_2, v_3, v_4, v_5, v_6$ in a double category $D$:
\[
	\begin{tikzcd}
		x_{00}\arrow[d] \arrow[r, ""'name=f1] & x_{10} \arrow[d] \arrow[r, ""'name=v2] & x_{20} \arrow[rr, ""'name=f2] &  & x_{40}\arrow[d] \\
		x_{01} \arrow[r, ""name=t1]\arrow[dd] & |[""name=f3]|x_{11}\arrow[r] &x_{21}\arrow[dd] \arrow[r, ""'name=f4] & x_{31} \arrow[d] \arrow[r, ""'name=f5] & x_{41}\arrow[d] \\
		& & & x_{32}\arrow[d]\arrow[r, ""name=f6] & x_{42} \arrow[d]\\
		x_{03}\arrow[rr, ""name=t3] & & x_{23} \arrow[r,""name=t4] &x_{33}\arrow[r,""name=t6] & x_{43}, 
		\arrow[from=f1,to=t1, Rightarrow, "v_1"] 
		\arrow[from=f4,to=t4, Rightarrow, "v_4"]
		\arrow[from=f3,to=t3, Rightarrow, "v_3", shorten <= 14pt]
		\arrow[from=f2,to=f4, Rightarrow, "v_2", start anchor={[xshift=-23pt]}, shorten >= 5pt]
		\arrow[from=f5,to=f6, Rightarrow, "v_5"]
		\arrow[from=f6,to=t6, Rightarrow, "v_6", shorten <= 5pt]
	\end{tikzcd}
\]
and wonder: does there exist a unique composite 2-cell $v$ in $D$? It has been shown by Dawson and Par\'e \cite{DawsonPare} that (in particular) this arrangement of 2-cells admits 
such a composite $v$. However, not every compatible arrangement 
of 2-cells in a general double category admits a composite. The authors of \cite{DawsonPare} established that there is an arrangement that does not have a composite in a general double category, which is called the \textit{pinwheel} (we will see this arrangement 
again in \ref{ssection.cpshapes}), and which is in a particular sense, the canonical example of such an inadmissible arrangement (see \cite{Dawson}). 
The procedure of obtaining new cells by composing compatible arrangements is also called \textit{pasting}, which was first introduced by B\'enabou \cite{Benabou}
in the context of 2-categories. 

The practice of pasting cells occurs in the context of many different categorical structures. Hence, one would like to have access to a \textit{pasting theorem}  
that asserts the existence and uniqueness of composites for certain configurations of cells in the categorical structures one considers. 
A famous pasting theorem for 2-categories was formulated and proven by Power \cite{Power}.  Nowadays, a wide range of pasting theorems for (strict) $\omega$-categories are available. Forest \cite{Forest} has recently 
unified the main pasting theorems in this context to a more general pasting theorem for  $\omega$-categories. There 
is a pasting theorem for double categories due to Dawson and Par\'e \cite{DawsonPare}.

The emergence of (weak) $\infty$-category theory has created a
need for variants of pasting theorems in the weaker setting. In this context, it is no longer natural to ask for unique composites, 
but instead require that the space\footnote{By space, we will always mean an $\infty$-groupoid in this paper.} of composites is contractible.  Lately, Hackney, Ozornova, Riehl and Rovelli \cite{Infty2Pasting} have proven such a pasting 
theorem for $(\infty,2)$-categories that generalizes Power's pasting theorem for 2-categories. 
There is an $\infty$-analog for double categories as well, so called \textit{double $\infty$-categories}, introduced by Haugseng \cite{HaugsengPhD} and further  
studied by Moser \cite{Moser}.
Thus, one may now ask: does there exist a pasting theorem for these double $\infty$-categories?

The goal of this paper is to answer this question affirmatively. We will treat a more general pasting problem for $d$-uple categories in the weak $\infty$-categorical setting. 
Note that this would already be interesting from the strict perspective: 
the pasting theorem particularly yields a novel pasting theorem for $d$-uple categories. The author does not know of an existing similar result for $d > 2$. 

We will consider an $\infty$-categorical variant of $d$-uple categories that are called \textit{$d$-uple Segal spaces} \cite{Haugseng2}.
Ordinary (1-uple) Segal spaces were first introduced by Rezk  \cite{RezkSeg} as a model for $\infty$-categories.
The $d$-uple Segal spaces are iterated variants of these that have additional directions for morphisms. For instance, a $2$-uple Segal space $X$ contains a space of objects, and between any two objects, 
a space of \textit{vertical} and \textit{horizontal} arrows. Thus a 2-uple Segal space has two categorical directions instead of merely one as is the case for an ordinary 
Segal space. Compatible arrows of $X$ that have the same direction, can be composed in a coherently associative fashion. Moreover, $X$ contains a space of 2-cells. A 2-cell may be pictured as a square
\[
	\begin{tikzcd}
		a \arrow[r, ""'name=f]\arrow[d] & b \arrow[d] \\
		c \arrow[r, ""name=t] & d \arrow[from=f, to=t, Rightarrow]
	\end{tikzcd}
\]
in $X$. Here the arrows that point horizontally are horizontal arrows of $X$ and similarly for the vertical ones. That is, 2-cells have a source and target vertical arrow and a source and target horizontal arrow.
Again, $X$ has a coherently associative composition for these 2-cells, which is compatible with the composition of 1-cells.
In general, $d$-uple Segal spaces contain $d$ categorical directions, which may interact coherently using higher cells.  

These $d$-uple Segal spaces are rich structures that 
play a useful role in $\infty$-category theory. For instance, by `truncating' all but one of the categorical directions, they can be used to model $(\infty,d)$-categories. This 
is the model for $(\infty,d)$-categories due to Barwick \cite{BarwickPhD}, which we will briefly discuss in \ref{section.outlook}. It has been (directly) compared to other 
models for $(\infty,d)$-categories by Bergner and Rezk in \cite{BergnerRezk1} and \cite{BergnerRezk2}, and Loubaton, Ozornova and  Rovelli in \cite{Loubaton} and \cite{OzornovaRovelli}. 
Consequently, $d$-uple Segal spaces may act as a useful intermediate step towards constructing $(\infty,d)$-categories. For instance, in order to construct \textit{$(\infty,d)$-categories of iterated 
spans} or to construct the \textit{Morita $(\infty,d)$-categories} it is more convenient to define their encompassing $d$-uple Segal spaces first (see \cite{Haugseng} and \cite{Haugseng2}). 
We also believe that $d$-uple Segal spaces can be used to study phenomena of $(\infty,d)$-categories. For instance, the universal property of the $(\infty,2)$-category 
of spans \cite{ElmantoHaugseng} should be the shadow of a universal property of $2$-uple Segal space of spans that is 
analogous to one in the strict case \cite{DawsonParePronk}. We hope to study this in future work.

This work grew out of the author's study of double $\infty$-categories, which can be viewed 
as 2-uple Segal spaces that satisfy a \textit{completeness} assumption (see \ref{rem.d-uple-infty-cats}). More specifically, out 
of the interest in a particular class of double $\infty$-categories: so-called \textit{$\infty$-equipments}, which are generalizations of proarrow equipments to the $\infty$-categorical context. 
These $\infty$-equipments  offer a context for \textit{synthetic} or \textit{formal category theory}. 
There exist suitable $\infty$-equipments for equivariant, indexed, internal, fibered, enriched and ordinary $\infty$-category theory. We commenced the study of these $\infty$-equipments in \cite{Equip1}.

\subsection*{Content of the paper} We will commence the paper by setting up the necessary preliminaries, including revisiting the definition of iterated Segal spaces, in \ref{section.prelims}.

Subsequently, in \ref{section.pshapes}, we introduce the novel notion of \textit{$d$-dimensional pasting shapes} whose \textit{nerves} classify arrangements of cells in a $d$-uple Segal space. 
 For $d=3$, these include arrangements of rectangular cuboids, whose faces may be subdivided into smaller rectangles.
The faces of these rectangles may in turn be subdivided into smaller edges. We will also discuss  
some fundamental properties that pasting shapes may have. In particular, we single out an important class 
of pasting shapes: the so-called \textit{(locally) composable} ones. The composable 
pasting shapes are certain well-behaved pasting shapes. For
$d=2$, the aforementioned pinwheel of Dawson and Par\'e is an example of a pasting shape that is not composable.  

After setting up the theory of pasting shapes, we can formulate the main result of this paper, the \textit{pasting theorem} for $d$-uple Segal spaces:
\begin{thmintro}[The pasting theorem]\label{thmintro.pasting-theorem}
	Suppose that $I_1, \dotsc, I_n$ is a covering of a $d$-dimensional locally composable pasting shape $I$. Then $I$ can be written as a union
	$$
	I = \bigcup_{i=1}^n I_i,
	$$
	and this colimit description is preserved when passing to the $\infty$-category of $d$-uple Segal spaces via the nerve functor for pasting shapes.
\end{thmintro}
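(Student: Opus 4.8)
The plan is to reduce the statement to a colimit computation inside presheaves and then to transport it along the nerve functor. First I would set up the combinatorial side: a $d$-dimensional pasting shape $I$ is (by its definition in Section~\ref{section.pshapes}) a certain presheaf-like gadget built out of a poset/category of cells, and a covering $I_1,\dotsc,I_n$ should, by definition, be a family of subshapes whose underlying cells jointly exhaust those of $I$, closed under the incidence relations. The first step is therefore to verify that the set-level (or rather, shape-level) union $I = \bigcup_{i=1}^n I_i$ holds, i.e.\ that $I$ is the colimit in the category $\Shape$ of $d$-dimensional pasting shapes of the diagram formed by the $I_i$ together with all their pairwise (and higher) intersections $I_{i_0} \cap \dotsb \cap I_{i_k}$. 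Because shapes are determined by their cells and incidences, this is a routine check that the relevant \v{C}ech-type diagram has the expected colimit — the coequalizer/coproduct presentation $\coprod_{i<j} I_i \cap I_j \rightrightarrows \coprod_i I_i \to I$ — and it is purely combinatorial.

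The second and main step is to show that the nerve functor $\map(-)$ (or $N$, whatever notation Section~\ref{section.pshapes} fixes) sends this colimit diagram in $\Shape$ to a colimit diagram in the $\infty$-category $\mathrm{Seg}_d(\S)$ of $d$-uple Segal spaces. I would argue this in two stages. First, forget the Segal conditions: the nerve of a pasting shape is an object of the presheaf $\infty$-category $\fun((\Delta^{\op})^{\times d},\S)$ (or the appropriate $\Theta$-type indexing category), and here colimits are computed levelwise. So it suffices to check that for each multi-index $[\mathbf{n}]$, the space of $[\mathbf{n}]$-cells of $N(I)$ is the colimit of the spaces of $[\mathbf{n}]$-cells of the $N(I_i)$ along the \v{C}ech diagram. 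Since each such cell-space is (by the combinatorics of how nerves are defined — presumably a nerve is a coproduct of representables indexed by the cells of the shape, or a colimit of representables along the cell poset) itself a coproduct/colimit of contractible pieces indexed by ways of mapping $[\mathbf{n}]$ into the shape, and since a map from a (finite) $[\mathbf{n}]$-pasting-diagram into $I$ must land in the union of the $I_i$, this levelwise statement again reduces to the set-level union of step one. The key point is that the indexing diagram is finite and the value spaces are coproducts of points, so the colimit is computed as a disjoint union and compatibility of the covering is exactly what is needed.

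The third step is the delicate one and I expect it to be the main obstacle: the colimit of the $N(I_i)$ computed in presheaves need not a priori be a $d$-uple Segal space, and even if it is, it need not agree with the colimit computed in the localized $\infty$-category $\mathrm{Seg}_d(\S)$ — localization does not preserve colimits in general. This is precisely where the hypothesis that $I$ is \emph{locally composable} must do its work. The strategy is: (a) show that the presheaf-level colimit $\colim N(I_i)$ already satisfies the Segal conditions, using local composability to guarantee that the relevant Segal maps (which at each multi-level are maps of coproducts-of-points, hence checkable combinatorially) are equivalences — this is where non-composable shapes like the pinwheel fail, because there the would-be composite cell is simply absent from the union; and (b) conclude that since the presheaf-level colimit is already local, it coincides with the colimit in $\mathrm{Seg}_d(\S)$ (the inclusion $\mathrm{Seg}_d(\S)\hookrightarrow \fun(\dots,\S)$ creates colimits of diagrams whose presheaf-colimit happens to be local). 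For (a) I would proceed by induction on $d$ and on the dimension/complexity of the pasting shape, peeling off one composable subdivision at a time and using that composability is exactly the condition making a Segal-type pushout square of shapes go to a pushout of Segal spaces; the intersections in the \v{C}ech diagram will themselves be (locally) composable or at least Segal-inert, which is what keeps the induction running. Modulo the bookkeeping, everything then assembles: the combinatorial union of step one, promoted levelwise in step two, is a colimit in the presheaf category, and local composability in step three certifies that this is also the colimit of $d$-uple Segal spaces.
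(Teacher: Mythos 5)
There are two genuine gaps, both at the places you flag as easy.

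First, the shape-level identity $I = \bigcup_{i} I_i$ is not a routine combinatorial check, and your description of a covering as a family of subshapes that ``jointly exhaust'' the cells of $I$ is not the definition: a covering is only required to contain every closed $k$-vertebra of $I$ in some $I_i$, and the union of pasting shapes is the \emph{smallest shape containing all of them} (it is closed under joins), so neither inclusion between $I$ and $\bigcup_i I_i$ is automatic. That the vertebrae do recover $I$ is exactly the content of \ref{thm.baby-pasting-theorem} and \ref{cor.baby-pasting-theorem-lc}, which the paper proves by induction on the height of admittable shapes via division pairs, decompositions and the factorization lemmas; and it genuinely fails without local composability --- for the pinwheel $PW$ the union of the vertebrae is $PW^\circ \neq PW$ (\ref{ex.no-baby-pasting-pinwheel}). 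So your step one already needs most of the combinatorial machinery of the paper.

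Second, your steps two and three misidentify where the homotopical difficulty sits. The presheaf-level colimit of the $[I_i]$ over the intersection poset is the union $\bigcup_i [I_i]$ of $d$-uple simplicial sets, and this is in general a \emph{proper} subobject of $[I]$: a simplex $\square[n_1,\dotsc,n_d] \to I$ need not factor through any single $I_i$ (already for $\square[2]$ covered by its two edges the union of nerves is the spine, not $\Delta[2]$). Consequently your strategy (a) --- show the presheaf colimit already satisfies the Segal conditions and hence is the colimit in $\Cat^d(\S)$ --- cannot work: the spine of $\Delta[2]$ is not a Segal set. What must be proven, and what the paper proves, is that the inclusion $\bigcup_i [I_i] \rightarrow [I]$ is a \emph{Segal equivalence}, i.e.\ becomes invertible after the reflective localization; since $[I]$ itself is local (\ref{prop.nerve-cosegal}), this identifies the localized colimit with $[I]$. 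The paper's proof of this is the real work: a double induction on the truncation level via hyperplane slices (\ref{lemma.pasting-theorem}) with the key input that every admittable shape is good, i.e.\ $\Sp[I] \cup \dtr_{\leq d-1}[I] \rightarrow [I]$ is a Segal equivalence (\ref{thm.spine-admittable}), itself established by an induction over division pairs and fillable subshapes. Your instinct that local composability is the crucial hypothesis, that the pinwheel is the obstruction, and that one should induct on dimension while peeling off composable pieces is correct in spirit, but the argument as proposed would prove a false intermediate statement.
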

\noindent The precise statement appears as \ref{thm.pasting-theorem} in the paper.
This theorem can be used to show \ref{cor.spine-inclusion}, which states that the \textit{spine inclusion} associated with a composable pasting shape is an equivalence.
The composable pasting shapes thus classify arrangements of cells that admit a composite that is unique up to contractible choice (see \ref{cor.composite-contractible-choice}). 

The technical heart of the paper is in \ref{section.proof}. Here, 
we proceed by induction on the dimension of pasting shapes, 
to give a proof of \ref{thmintro.pasting-theorem}.
After this demonstration, we conclude the paper in \ref{section.outlook} by giving an idea of how our pasting theorem for $d$-uple Segal spaces may yield 
a pasting theorem for $(\infty,d)$-categories.

\subsection*{Conventions}
We will use the language of $\infty$-categories throughout this article. 
For definiteness, we work with the model of quasi-categories for $\infty$-categories as developed by Joyal and Lurie \cite{HTT}. The readers that prefer to use the language of model categories, may interpret the main results of this paper within the appropriate model categories associated with the $\infty$-categories in question. We will use 
the following customary notation: 
\begin{itemize}
	\item The (large) $\infty$-categories of spaces \cite[Definition 1.2.16.1]{HTT} and $\infty$-categories \cite[Definition 3.0.0.1]{HTT} are denoted by $\S$ and $\Cat_\infty$, respectively. 
	\item If $\C$ is an $\infty$-category, we write 
	$$
	\map_\C(-,-) : \C^\op \times \C \rightarrow \S
	$$
	for its associated mapping space functor \cite[Subsection 5.1.3]{HTT}.
	\item Every $1$-category $\C$ will be viewed as an $\infty$-category, suppressing the notation of the nerve \cite[Subsection 1.1.2]{HTT}.
\end{itemize}
\subsection*{Acknowledgements}
I want to thank my PhD-supervisor, Lennart Meier, for the helpful conversations during the writing of this paper and his useful comments on the draft versions.
Furthermore, I would like to express my gratitude to the anonymous referee for the useful suggestions that greatly improved the presentation of this article.

During the writing of this paper, the author was funded by the Dutch Research Council (NWO) through the grant ``The interplay of orientations and symmetry'', grant no. OCENW.KLEIN.364.

\section{Preliminaries}\label{section.prelims}

A clear definition of the (strict) $d$-uple categories introduced by Ehresmann \cite{Ehresmann} can be given in terms of 
Grothendieck's notion of \textit{categorical objects} \cite{Grothendieck}. 
Following \cite{Haugseng2} and \cite{Haugseng}, we will introduce an $\infty$-categorical variant on $d$-uple categories via the same principle. 

\subsection{Categorical objects}\label{ssection.cat-objs}

To this end, we first need the notion of categorical objects in an $\infty$-category. 
Throughout this section, we will fix an $\infty$-category $\C$ and assume that it has all pullbacks.
The following definition is due to \cite{LurieInfty2}:

\begin{definition}\label{def.cat-objects}
A \textit{categorical object $X$ in $\C$} is a simplicial object $X : \Delta^\op \rightarrow \C$
such that the so-called \textit{Segal map}
$$
X([n]) \rightarrow X(\{0 \leq 1\}) \times_{X(\{1\})} \dotsb \times_{X(\{n-1\})} X(\{n-1\leq n\})
$$ 
is an equivalence for all $n$. The full subcategory of $\fun(\Delta^\op, \C)$ spanned by the categorical objects in $\C$ is denoted by $\Cat(\C)$. 
\end{definition}

\begin{example} We have the following examples:
	\begin{itemize}
	\item A categorical object in the $(2,1)$-category of categories is a pseudo double category. These were defined by Grandis and Par\'e \cite{GrandisPare}.
	\item If $\C$ is given by the $\infty$-category of spaces $\S$, then $\Cat(\S)$ is the $\infty$-category that underlies 
	\textit{the model category of Segal spaces}, constructed by Rezk in \cite[Theorem 7.1]{RezkSeg}. Henceforth, we will refer to categorical 
	objects in $\S$ as \textit{Segal spaces}.
	\item A categorical object in $\Cat_\infty$ is called a \textit{double $\infty$-category}. These were first studied by Haugseng in \cite{HaugsengPhD}.
	\end{itemize}
\end{example}

Note that the subcategory 
$\Cat(\C) \subset \fun(\Delta^\op, \C)$ is closed under limits, so that limits of categorical objects may be computed pointwise.
In particular, we deduce that $\Cat(\C)$ again admits all pullbacks. Thus, we may iterate \ref{def.cat-objects}:

\begin{definition}
We define the $\infty$-category of \textit{$d$-uple categorical objects in $\C$} by 
$$
\Cat^d(\C) := \Cat(\dotsb\Cat(\Cat(\C))\dotsb).
$$
\end{definition} 

By adjunction, the $\infty$-category of $d$-uple categorical objects in $\C$ can be described 
as a full subcategory of the $\infty$-category $$ \fun(\Delta^{\op, \times d}, \C)$$
of \textit{$d$-uple simplicial objects in $\C$}. In the case that $\C$ is presentable, then
 $\Cat^d(\C)$ is a (left) reflective subcategory of $\fun(\Delta^{\op, \times d}, \C)$. I.e.\ the  
 inclusion admits a left adjoint (see also \cite[Remark 5.2.7.9]{HTT}) in this case. We will demonstrate how this can be established.

\begin{construction} Let us write $$\Delta[n_1, \dotsc, n_d] : \Delta^{\op, \times d} \rightarrow \cat{Set} \subset \S$$ for the discrete presheaf represented by $([n_1] , \dotsc, [n_d]) \in \Delta^{\times d}$.
We consider the category $\mathbb{G}^{\times d}/([n_1], \dotsc, [n_d])$ defined by the pullback square 
\[
	\begin{tikzcd}
		\mathbb{G}^{\times d}/([n_1], \dotsc, [n_d]) \arrow[r]\arrow[d] & \Delta^{ \times d}_\mathrm{in}/([n_1], \dotsc, [n_d]) \arrow[d] \\
		\mathbb{G}^{\times d} \arrow[r] & \Delta_{\mathrm{in}}^{ \times d}.
	\end{tikzcd}
\]
Here $\Delta_{\mathrm{in}}$ is the subcategory of $\Delta$ spanned by the \textit{inert} maps: i.e.\ maps that are injective and have convex image. 
The category $\mathbb{G}$ is in turn the full 
subcategory of $\Delta_{\mathrm{in}}$ spanned by $[0]$ and $[1]$, the free 0- and 1-cells.
The \textit{spine inclusion} for $([n_1], \dotsc, [n_d])$ is now defined to be the map of presheaves
$$i_{n_1, \dotsc, n_d} : \colim_{([k_1], \dotsc, [k_d]) \in \mathbb{G}^{\times d}/([n_1], \dotsc, [n_d])} \Delta[k_1, \dotsc, k_d] \rightarrow \Delta[n_1, \dotsc, n_d].$$
\end{construction}

We will see a more geometrically flavored definition of the spine inclusions in \ref{section.pshapes}.

\begin{proposition}
	If $\C$ is presentable, then there exists a left adjoint 
	$$
	L : \fun(\Delta^{\op, \times d}, \C) \rightarrow \Cat^d(\C),
	$$
	to the inclusion 
	$\Cat^d(\C) \rightarrow \fun(\Delta^{\op, \times d}, \C)$.
\end{proposition}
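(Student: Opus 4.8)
The plan is to realize $\Cat^d(\C)$ as the full subcategory of local objects for a \emph{small} set of morphisms in the presentable $\infty$-category $\fun(\Delta^{\op, \times d}, \C)$ — a set assembled from the spine inclusions $i_{n_1, \dotsc, n_d}$ constructed above — and then to invoke the standard fact that such localizations of presentable $\infty$-categories are reflective, e.g.\ \cite[Proposition 5.5.4.15]{HTT}.

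First I would record that $\fun(\Delta^{\op, \times d}, \C)$ is presentable: since $\C$ is presentable and $\Delta^{\times d}$ is small, this is \cite[Proposition 5.5.3.6]{HTT}. Fix a small set $\C_0 \subseteq \C$ generating $\C$ under colimits. Using that $\C$ is tensored over $\S$, any presheaf $K : \Delta^{\op, \times d} \to \S$ and object $c \in \C$ yield a $d$-uple simplicial object $c \otimes K$ in $\C$ with $(c \otimes K)(\vec n) = K(\vec n) \otimes c$, and post-composing the adjunction $((-) \otimes c) \dashv \map_\C(c, -)$ levelwise gives a natural equivalence
$$
\map_{\fun(\Delta^{\op, \times d}, \C)}(c \otimes K, X) \simeq \map_{\fun(\Delta^{\op, \times d}, \S)}\bigl(K, \map_\C(c, X(-))\bigr).
$$
Set $S = \{\, c \otimes i_{n_1, \dotsc, n_d} : c \in \C_0, \ (n_1, \dotsc, n_d) \in \N^{d} \,\}$, a small set of maps in $\fun(\Delta^{\op, \times d}, \C)$. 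Combining the displayed equivalence with the colimit presentation of the source of $i_{n_1, \dotsc, n_d}$, and using that the objects of $\C_0$ are jointly conservative, one sees that $X$ is $S$-local precisely when, for all $(n_1, \dotsc, n_d)$, the canonical map
$$
X([n_1], \dotsc, [n_d]) \longrightarrow \lim_{([k_1], \dotsc, [k_d]) \in (\mathbb{G}^{\times d}/([n_1], \dotsc, [n_d]))^{\op}} X([k_1], \dotsc, [k_d])
$$
is an equivalence; call this the \emph{grid-Segal condition}.

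The main step is then to identify the grid-Segal condition (for all tuples) with $X$ being a $d$-uple categorical object. Unravelling \ref{def.cat-objects} iteratively, $X \in \Cat^d(\C)$ amounts to asking that for each index $j$ and each tuple $(n_i)_{i \neq j}$ the simplicial object obtained by fixing all coordinates but the $j$th is a categorical object in $\C$. For the ``only if'' direction one iterates these single-variable Segal equivalences and applies the Fubini theorem for limits together with the identification $\mathbb{G}^{\times d}/([n_1], \dotsc, [n_d]) \cong \prod_{j} \mathbb{G}/[n_j]$. For the ``if'' direction one runs this backwards: specializing the grid-Segal condition to tuples all but one of whose coordinates lie in $\{0, 1\}$ collapses those factors — since $\mathbb{G}/[0]$ is terminal and $\mathbb{G}/[1]$ has a terminal object, the corresponding limits reduce to evaluation at the top cell — which extracts the single-variable Segal conditions, and feeding these back into the general grid-Segal condition recovers the rest. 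With $\Cat^d(\C) = \{\,S\text{-local objects}\,\}$ established, \cite[Proposition 5.5.4.15]{HTT} produces the left adjoint $L$.

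The delicate point I expect to occupy most of the work is this last combinatorial identification, in particular the ``if'' direction: one must track exactly which single-variable Segal conditions are directly visible from the grid-Segal conditions, how to bootstrap the remaining ones, and how the limits over $\mathbb{G}/[0]$ and $\mathbb{G}/[1]$ behave. An alternative that avoids the spine combinatorics altogether is induction on $d$: the case $d = 1$ is the known statement that $\Cat(\C) \subseteq \fun(\Delta^{\op}, \C)$ is reflective for presentable $\C$ (cf.\ \cite[Remark 5.2.7.9]{HTT}); for the step, $\Cat(\Cat^{d-1}(\C)) \subseteq \fun(\Delta^{\op}, \Cat^{d-1}(\C))$ is reflective by the base case applied to the presentable $\infty$-category $\Cat^{d-1}(\C)$, while $\fun(\Delta^{\op}, \Cat^{d-1}(\C)) \subseteq \fun(\Delta^{\op}, \fun(\Delta^{\op, \times (d-1)}, \C)) = \fun(\Delta^{\op, \times d}, \C)$ is reflective because a levelwise left adjoint is again a left adjoint and the inclusion is fully faithful, and a composite of reflective localizations is reflective.
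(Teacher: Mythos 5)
Your proposal follows essentially the same route as the paper: both form the small set $S$ of maps obtained by tensoring the spine inclusions $i_{n_1,\dotsc,n_d}$ with a small generating set of $\C$, identify the $S$-local objects with the $d$-uple categorical objects, and conclude via \cite[Proposition 5.5.4.15]{HTT}. The only difference is one of detail: the paper dismisses the identification of $S$-local objects with the phrase ``by construction,'' whereas you (correctly) spell out the bootstrapping needed to pass between the grid-Segal condition and the iterated single-variable Segal conditions, and you also sketch a valid alternative induction on $d$.
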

\begin{proof}
	Let $G$ be a (small) set of compact generators for $\C$, so that any object of $\C$ can be written as the colimit of a small diagram of objects in $G$ (see characterization (6) of \cite[Theorem 5.5.1.1]{HTT}). Then we obtain a set $S$ of maps of $d$-uple simplicial $\C$-objects whose 
elements are given by
$$
i_{n_1, \dotsc, n_m} \otimes x : \Delta^{\op, \times d} \xrightarrow{(i_{n_1, \dotsc, n_m}, x)}\S \times \C \xrightarrow{-\otimes -} \C, \quad n_1, \dotsc, n_m \in \N, x \in G.
$$
 Here $- \otimes -$ denotes the canonical tensor product $\S \times \C \rightarrow \C$ that is determined 
by fitting in adjunctions
$$
- \otimes x : \S \rightleftarrows  \C : \map_\C(x,-),
$$ 
for $x \in \C$. This tensor product is discussed in \cite[Subsection 4.4.4]{HTT}. By construction, 
the $S$-local objects (see \cite[Definition 5.5.4.1]{HTT}) are precisely the $d$-uple categorical objects in $\C$. Hence, the result follows from \cite[Proposition 5.5.4.15]{HTT}.
\end{proof}

It directly follows from the definition that $d$-uple categorical objects are functorial in the following way:
\begin{proposition}\label{prop.cat-objs-funct}
	Suppose that $f : \C \rightarrow \D$ is a finite limit preserving functor between $\infty$-categories that admit all finite limits.
	Then the induced functor $\fun(\Delta^{\op, \times d}, \C) \rightarrow \fun(\Delta^{\op, \times d}, \D)$ restricts 
	to a functor $\Cat^d(\C) \rightarrow \Cat^d(\D)$.
\end{proposition}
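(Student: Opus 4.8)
The plan is to reduce the statement to an inductive application of the characterization of categorical objects via Segal maps in \ref{def.cat-objects}. The key observation is that the condition defining $\Cat^d(\C)$ inside $\fun(\Delta^{\op,\times d},\C)$ is expressed entirely in terms of finite limits: it asks that an iterated collection of Segal maps be equivalences, and each Segal map is a map into a finite limit (a pullback tower) of evaluations of the $d$-uple simplicial object. Since $f$ preserves finite limits, it will carry equivalences of this form to equivalences of the same form. So the real content is bookkeeping: unwinding the iterated definition $\Cat(\dotsb\Cat(\C)\dotsb)$ and checking that the Segal conditions at each level are preserved.

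First I would set up notation: write $F \colon \fun(\Delta^{\op,\times d},\C) \to \fun(\Delta^{\op,\times d},\D)$ for the functor induced by postcomposition with $f$. I would then proceed by induction on $d$. For $d = 0$ there is nothing to prove. For $d = 1$: given a categorical object $X \colon \Delta^\op \to \C$, the object $FX = f\circ X$ has Segal map at $[n]$ given by applying $f$ to the Segal map of $X$, \emph{provided} $f$ commutes with the formation of the iterated pullback $X(\{0\le 1\})\times_{X(\{1\})}\dotsb\times_{X(\{n-1\})}X(\{n-1\le n\})$. This is exactly where finite-limit preservation is used: $f$ takes this iterated pullback in $\C$ to the corresponding iterated pullback in $\D$, compatibly with the canonical maps, so the Segal map of $FX$ is $f$ applied to an equivalence, hence an equivalence. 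Thus $FX \in \Cat(\D)$.

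For the inductive step, I would use the adjunction identification $\fun(\Delta^{\op,\times(d+1)},\C) \simeq \fun(\Delta^\op, \fun(\Delta^{\op,\times d},\C))$, under which $\Cat^{d+1}(\C) = \Cat(\Cat^d(\C))$. Postcomposition with $f$ induces $\fun(\Delta^{\op,\times d},\C) \to \fun(\Delta^{\op,\times d},\D)$, which by the inductive hypothesis restricts to $\Cat^d(\C) \to \Cat^d(\D)$; moreover this restricted functor preserves finite limits, since finite limits in $\Cat^d(\C)$ are computed pointwise (as noted in the excerpt, $\Cat(\C) \subset \fun(\Delta^\op,\C)$ is closed under limits, and this iterates) and $f$ preserves finite limits pointwise. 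Now applying the $d = 1$ case of the argument, but with $f$ replaced by this finite-limit-preserving functor $\Cat^d(\C) \to \Cat^d(\D)$, shows that the induced functor on simplicial objects sends $\Cat(\Cat^d(\C))$ into $\Cat(\Cat^d(\D))$, which is the desired conclusion.

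The only genuinely delicate point is justifying that the Segal map of $FX$ really is $f$ applied to the Segal map of $X$ — i.e., that the canonical comparison maps are compatible under $f$. This is a coherence statement about how a finite-limit-preserving functor interacts with a diagram of finite limits; it is not hard, but it is the step that deserves care rather than a one-line dismissal. Everything else is formal: the induction, the pointwise computation of limits, and the reduction of the $(d+1)$-level Segal condition to the $1$-level condition over the base $\Cat^d(\C)$.
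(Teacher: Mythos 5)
Your proof is correct and is exactly the argument the paper has in mind: the paper states \ref{prop.cat-objs-funct} without proof (``it directly follows from the definition''), and your induction on $d$ --- using that the Segal maps are finite-limit conditions preserved by $f$, and that limits in $\Cat^d(\C)$ are computed pointwise so the restricted functor again preserves finite limits --- is the standard unwinding of that claim. No gaps; the one coherence point you flag (identifying the Segal map of $f\circ X$ with $f$ applied to the Segal map of $X$ via the canonical equivalence $f(\lim)\simeq \lim f$) is handled correctly.
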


Moreover, there is functoriality in the dimension $d$. Fix $0 \leq k \leq d$ and a choice of 
a $k$-tuple of indices $1 \leq h_1 < \dotsc < h_k \leq d$. The category $\Delta^\op$ has an initial object 
given by $[0]$. Hence, the projection functor 
$$
p_h : \Delta^{\op, \times d} \rightarrow \Delta^{\op, \times k} : ([n_1], \dotsc, [n_d]) \mapsto ([n_{h_1}],\dotsc,[n_{h_k}])
$$
admits a (fully faithful) left adjoint $j_h : \Delta^{\op, \times k} \rightarrow \Delta^{\op, \times d}$ 
given on objects by 
$$
j_h([m_1], \dotsc, [m_k])_a = \begin{cases}
	[m_b] & \text{if $a = h_b$},  \\
	[0] & \text{otherwise}.
\end{cases}
$$
We now obtain an induced adjunction
$$p_h^* : \fun(\Delta^{\op, \times k}, \C) \rightleftarrows \fun(\Delta^{\op, \times d}, \C) : j_h^*$$ 
on functor $\infty$-categories.
The following is readily verified:

\begin{proposition}\label{prop.cat-k-d-incl}
	The adjunction $(p_h^*, j_h^*)$  restricts to an adjunction 
	$$p_h^* : \Cat^k(\C) \rightleftarrows \Cat^d(\C) : j_h^*.$$
\end{proposition}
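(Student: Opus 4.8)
The plan is to argue directly with the concrete description of $\Cat^d(\C)$ obtained by unwinding \ref{def.cat-objects}: iterating the definition (and using that at each stage limits, hence the Segal condition, are computed pointwise) exhibits $\Cat^d(\C)$ as the full subcategory of $\fun(\Delta^{\op, \times d}, \C)$ spanned by those $d$-uple simplicial objects $X$ whose restriction to each of the $d$ simplicial coordinates, with the remaining $d - 1$ coordinates frozen (at arbitrary values), is a Segal object. Since $j_h \dashv p_h$ induces $p_h^* \dashv j_h^*$ by functoriality of precomposition, and since $\Cat^k(\C)$ and $\Cat^d(\C)$ are full subcategories, it suffices to check that $p_h^*$ carries $\Cat^k(\C)$ into $\Cat^d(\C)$ and that $j_h^*$ carries $\Cat^d(\C)$ into $\Cat^k(\C)$: the adjunction then restricts, because the unit and counit of $(p_h^*, j_h^*)$ have components in these full subcategories and the triangle identities are inherited.

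For the first containment, fix $\Psi \in \Cat^k(\C)$. The object $p_h^*\Psi = \Psi \circ p_h$ depends only on the coordinates $h_1, \dotsc, h_k$ and is constant in the others, so I check the Segal condition in each coordinate with the remaining ones frozen. If the free coordinate is not among $h_1, \dotsc, h_k$, the resulting simplicial object is constant, and a constant simplicial object is a Segal object, since the iterated fibre product of an object over itself along identity maps recovers that object. If the free coordinate is $h_b$, the resulting simplicial object coincides with $\Psi$ in its $b$-th coordinate with the others frozen, which is Segal as $\Psi \in \Cat^k(\C)$. For the second containment, fix $\Phi \in \Cat^d(\C)$; the object $j_h^*\Phi = \Phi \circ j_h$ is obtained by setting every coordinate of $\Phi$ outside $\{h_1, \dotsc, h_k\}$ equal to $[0]$. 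Freezing all but the $b$-th coordinate of $j_h^*\Phi$ yields $\Phi$ with its $h_b$-th coordinate free and all others frozen — the ones outside $\{h_1, \dotsc, h_k\}$ at $[0]$, the rest at the chosen values — hence a Segal object since $\Phi \in \Cat^d(\C)$.

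I do not anticipate a real obstacle: the argument is essentially formal, and the only points requiring a little care are the bookkeeping of active versus frozen coordinates, the (standard) fact that constant simplicial objects satisfy the Segal condition, and the observation that precomposition turns the left adjoint $j_h$ into the right adjoint $j_h^*$ while an adjunction restricts along any pair of full subcategories that both functors preserve.
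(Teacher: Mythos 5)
Your argument is correct: the paper gives no proof of this proposition (it is introduced with ``The following is readily verified''), and your verification --- unwinding $\Cat^d(\C)$ to the coordinate-wise Segal condition, checking that $p_h^*$ and $j_h^*$ preserve the relevant full subcategories, and restricting the adjunction --- is exactly the routine check the author leaves to the reader. The only points needing care (constant simplicial objects are Segal, and precomposition with the left adjoint $j_h$ yields the right adjoint $j_h^*$) are both handled correctly.
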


\subsection{Iterated Segal spaces} We now specialize the notions of iterated categorical objects to obtain a variant on $d$-uple categories (cf.\ \cite[Section 3]{Haugseng2}):

\begin{definition}
	The $d$-uple categorical objects in the $\infty$-category $\S$ of spaces  and in the 
	category $\cat{Set}$ of sets are called \textit{$d$-uple Segal spaces} and \textit{$d$-uple Segal sets}, respectively.
\end{definition}

The (limit-preserving) inclusion $\cat{Set} \rightarrow \S$ gives rise to an inclusion 
$$
\fun(\Delta^{\op, \times d}, \cat{Set}) \rightarrow \fun(\Delta^{\op, \times d}, \S)
$$
that further restricts to a fully faithful functor
$$
\Cat^d(\cat{Set}) \rightarrow \Cat^d(\S)
$$ 
on account of \ref{prop.cat-objs-funct}. We leave these inclusions implicit and view $d$-uple Segal sets 
and $d$-uple simplicial sets as (discrete) $d$-uple Segal spaces and $d$-uple simplicial spaces, respectively.

\begin{remark}\label{rem.d-uple-infty-cats}
	Following the definition of ordinary multifold categories, a \textit{$d$-uple $\infty$-category} should be a $(d-1)$-uple categorical object in $\Cat_\infty$.
	Joyal and Tierney \cite{JoyalTierney} have shown that $\Cat_\infty$ is a reflective subcategory of $\Cat(\S)$, 
	identifying $\infty$-categories with the Segal spaces that are \textit{complete}   (see \ref{section.outlook} for a definition and a precise statement of this fact). 
	This implies that 
	$\Cat^{d-1}(\Cat_\infty)$ can be viewed as a reflective subcategory of $\Cat^d(\S)$ spanned by 
	those $d$-uple Segal spaces that satisfy a certain additional completeness condition. Throughout this article, we prefer to work with the latter, more general, notion.
\end{remark}

\begin{definition}
	We say that a map $f : X \rightarrow Y$ between $d$-uple simplicial spaces is a \textit{Segal equivalence} if it is carried to an equivalence by the localization functor $L : \fun(\Delta^{\op, \times d}, \S) \rightarrow \Cat^d(\S)$, or, equivalently if the induced map 
	$$
	f^* : \map_{\fun(\Delta^{\op, \times d}, \S)}(Y, Z) \rightarrow \map_{\fun(\Delta^{\op, \times d}, \S)}(X, Z)
	$$
	of spaces is an equivalence for every $d$-uple Segal space $Z$.
\end{definition}

\begin{remark}
	On account of \cite[Proposition 5.4.15]{HTT}, the class of Segal equivalences is the \textit{strongly generated class of morphisms} (see \cite[Definition 5.5.4.5]{HTT}) generated 
	by all spine inclusions $i_{n_1, \dotsc, n_d}$, $n_1, \dotsc, n_d \geq 0$.
\end{remark}

We end this subsection with two important propositions that will be used repeatedly throughout this article to identify Segal equivalences between $d$-uple simplicial sets.

\begin{proposition}\label{prop.pushout-seq}
	Suppose that we have a pushout diagram 
	\[
		\begin{tikzcd}
			A \arrow[r] \arrow[d] & X \arrow[d] \\
			B \arrow[r] & Y
		\end{tikzcd}
	\]
	in $\fun(\Delta^{\op, \times d}, \cat{Set})$ so that the map $A \rightarrow B$ is injective and a Segal equivalence. Then the map $X \rightarrow Y$ is injective and a Segal equivalence as well.
\end{proposition}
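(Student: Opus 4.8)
The plan is to treat the two assertions --- that $X \to Y$ is injective, and that it is a Segal equivalence --- separately; the first is formal, and the second rests on a single genuine observation, namely that forming a pushout along a monomorphism is insensitive to whether one works in $\cat{Set}$ or in $\S$. For injectivity I would argue in each multidegree: colimits in $\fun(\Delta^{\op, \times d}, \cat{Set})$ are computed pointwise, and in $\cat{Set}$ monomorphisms are stable under pushout. Concretely, if $A \hookrightarrow B$ is injective and $A \to X$ is arbitrary, a short inspection of the equivalence relation presenting $X \sqcup_A B$ shows that $X \to X \sqcup_A B$ is again injective: an identification $x \sim x'$ between elements of $X$ must be witnessed by a zig-zag through $B$, and injectivity of $A \to B$ collapses any such zig-zag, forcing $x = x'$. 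Hence $X \to Y$ is injective in every multidegree, i.e.\ injective.

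For the Segal equivalence property, the crucial point is that the given square, although formed in $\fun(\Delta^{\op, \times d}, \cat{Set})$, remains a pushout square after applying the inclusion into $\fun(\Delta^{\op, \times d}, \S)$, precisely because $A \to B$ is a monomorphism. Since $1$-categorical and $\infty$-categorical colimits in these functor categories are both computed pointwise, this reduces to the claim that $\cat{Set} \hookrightarrow \S$ preserves pushouts of spans one of whose legs is a monomorphism. For a monomorphism of sets $A \hookrightarrow B$ one sees this by choosing a complement $C$ of the image: this identifies $A \to B$ with the coproduct inclusion $A \to A \sqcup C$, hence $X \sqcup_A B$ with $X \sqcup C$ (by the standard pasting of pushout squares, valid in both $\cat{Set}$ and $\S$), and $\cat{Set} \hookrightarrow \S$ preserves coproducts.

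Granting this, the conclusion is quick. Writing $\map(-,-)$ for $\map_{\fun(\Delta^{\op, \times d}, \S)}(-,-)$, I would apply the limit-preserving contravariant functor $\map(-,Z)$, for an arbitrary $d$-uple Segal space $Z$, to the square now viewed as an $\infty$-categorical pushout; this produces a pullback square of spaces
\[
\begin{tikzcd}
\map(Y, Z) \arrow[r] \arrow[d] & \map(X, Z) \arrow[d] \\
\map(B, Z) \arrow[r] & \map(A, Z).
\end{tikzcd}
\]
The bottom map is an equivalence because $A \to B$ is a Segal equivalence, and equivalences are stable under base change, so the top map is an equivalence; as $Z$ was arbitrary, $X \to Y$ is a Segal equivalence. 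Alternatively, once the square is known to be an $\infty$-categorical pushout, one may invoke the remark preceding the proposition directly, since the strongly saturated class of Segal equivalences is closed under pushouts.

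The step that needs the most care is the reduction in the second paragraph: $\cat{Set} \hookrightarrow \S$ does \emph{not} preserve arbitrary pushouts --- for instance $* \sqcup_{\{0,1\}} *$ is a point in $\cat{Set}$ but the circle in $\S$ --- so the hypothesis that $A \to B$ is injective is used in an essential way at exactly this point, and the statement would be false without it; everything else is bookkeeping.
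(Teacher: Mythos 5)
Your proof is correct, and the key technical step is handled by a genuinely different (and more elementary) argument than the paper's. Both proofs agree on the overall shape: reduce to showing that the square remains a pushout after passing to $\fun(\Delta^{\op,\times d},\S)$, check this level-wise, and then conclude either by applying $\map(-,Z)$ for an arbitrary $d$-uple Segal space $Z$ or by citing closure of Segal equivalences under pushouts (the paper uses the latter, via the left adjoint $L$; your two finishing moves are both fine and equivalent). Where you diverge is in the level-wise claim that $\cat{Set}\hookrightarrow\S$ preserves pushouts along a monomorphism. The paper factors the inclusion as $\cat{Set}\to\cat{sSet}\to\S$ and invokes the model-categorical fact that a pushout along a level-wise cofibration of cofibrant objects in the Kan--Quillen model structure is a homotopy pushout, which the localization $\cat{sSet}\to\S$ then carries to a pushout in $\S$. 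You instead exploit that the level-wise objects are genuinely sets, so the monomorphism splits off a complement $C$, identifying $A\to B$ with a coproduct inclusion and the pushout with $X\sqcup C$ via the pasting law; this needs only that $\cat{Set}\hookrightarrow\S$ preserves coproducts and the initial object. Your route avoids model categories entirely and is self-contained, at the cost of being special to $\cat{Set}$ (it would not adapt to level-wise cofibrations of simplicial sets, where the paper's argument still works). You also verify injectivity of $X\to Y$ explicitly, which the paper leaves implicit, and your closing remark correctly identifies where the injectivity hypothesis is essential.
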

\begin{proof}
	Since the localization functor $L$ is a left adjoint, we deduce that Segal equivalences are closed under pushouts in $\fun(\Delta^{\op, \times d}, \S)$. 
	Hence, it suffices to show that the pushout square 
	is preserved under the functor $$\fun(\Delta^{\op, \times d}, \cat{Set}) \rightarrow \fun(\Delta^{\op, \times d}, \S).$$ 
	This can be checked level-wise. The inclusion $\cat{Set} \rightarrow \S$ factors as $$\cat{Set} \rightarrow \cat{sSet} \rightarrow \S,$$ where the functor $\cat{sSet} \rightarrow \S$ 
	witnesses the $\infty$-category of spaces $\S$ as the localization of the category of simplicial sets $\cat{sSet}$ at the weak homotopy equivalences, i.e.\ it witnesses $\S$ as the underlying $\infty$-category
	of $\cat{sSet}$ equipped with the Kan-Quillen model structure. This localization functor carries homotopy colimits to colimits (see \cite[Theorem 4.2.4.1]{HTT}). 
	Hence, we have to show that the square 
	\[
		\begin{tikzcd}
			A \arrow[r] \arrow[d] & X \arrow[d] \\
			B \arrow[r] & Y
		\end{tikzcd}
	\]
	is level-wise a homotopy pushout square in the Kan-Quillen model structure on $\cat{sSet}$. But this follows 
	from the fact that $A \rightarrow B$ is a level-wise cofibration, and all objects in $\cat{sSet}$ are cofibrant (see \cite[Proposition A.2.4.4]{HTT} for instance).
\end{proof}

\begin{proposition}\label{lemma.unions-seq}
	Let $X$ be a $d$-uple simplicial set. Suppose that $S$ is 
	a collection
	of pairs $(A,B)$ where $A \subset B \subset X$ are $d$-uple simplicial subsets, which 
	satisfies the following two conditions:
	\begin{enumerate}
		\item for every $(A,B)$, $(A',B') \in S$, we have  $A \cap B' \subset A'$ and $(A\cap B', B \cap B') \in S$,
		\item for every $(A,B) \in S$, the inclusion $A\rightarrow B$ is a Segal equivalence.
	\end{enumerate}
	Then if $(A_1, B_1),\dotsc, (A_n, B_n)$ are pairs in $S$, the inclusion
	$$
	\textstyle \bigcup_{i=1}^n A_i \rightarrow \bigcup_{i=1}^n B_i
	$$
	is a Segal equivalence as well.
\end{proposition}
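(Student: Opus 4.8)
The plan is to induct on $n$, the base case $n=1$ being exactly hypothesis (2). For the inductive step I write $A := \bigcup_{i=1}^{n-1} A_i$ and $B := \bigcup_{i=1}^{n-1} B_i$; by the inductive hypothesis the inclusion $A \to B$ is a Segal equivalence (it is automatically injective, being an inclusion of $d$-uple simplicial subsets). Since $A_n \subseteq B_n$, the inclusion $\bigcup_{i=1}^n A_i \to \bigcup_{i=1}^n B_i$ factors as
\[
	A \cup A_n \longrightarrow B \cup A_n \longrightarrow B \cup B_n ,
\]
so it suffices to prove that both of these maps are Segal equivalences, as these are closed under composition. Each will follow from \ref{prop.pushout-seq} once the relevant pushout square is exhibited; here I use that for two $d$-uple simplicial subsets $U, V$ of a common ambient one, the union $U \cup V$ is the pushout of $U \leftarrow U \cap V \to V$, computed level-wise.

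For the first map, distributivity and $A \subseteq B$ give $(A \cup A_n) \cap B = A \cup (A_n \cap B)$, while condition (1) applied to $(A_n, B_n)$ and each $(A_i, B_i)$ with $i < n$ yields $A_n \cap B_i \subseteq A_i$, hence $A_n \cap B \subseteq A$ and so $(A \cup A_n) \cap B = A$. Thus $B \cup A_n$ is the pushout of $A \cup A_n \leftarrow A \to B$, and applying \ref{prop.pushout-seq} to the injective Segal equivalence $A \to B$ shows $A \cup A_n \to B \cup A_n$ is one as well.

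For the second map the idea is to intersect everything with $B_n$. Set $\tilde A_i := A_i \cap B_n$ and $\tilde B_i := B_i \cap B_n$ for $i < n$; condition (1) applied to $(A_i, B_i)$ and $(A_n, B_n)$ guarantees both $(\tilde A_i, \tilde B_i) \in S$ and $\tilde A_i \subseteq A_n$. Put $U' := \bigcup_{i < n} \tilde A_i$ and $U := \bigcup_{i < n} \tilde B_i$. The inductive hypothesis applied to the $n-1$ pairs $(\tilde A_i, \tilde B_i)$ shows $U' \to U$ is a Segal equivalence. A short computation with condition (1) identifies $A_n \cap B_i = A_i \cap B_n = \tilde A_i$ for $i < n$, hence $A_n \cap U = U'$; so $A_n \cup U$ is the pushout of $A_n \leftarrow U' \to U$ and \ref{prop.pushout-seq} shows $A_n \to A_n \cup U$ is a Segal equivalence. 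Since $A_n \to B_n$ is a Segal equivalence by (2) and $A_n \cup U \subseteq B_n$, two-out-of-three forces $A_n \cup U \to B_n$ to be an injective Segal equivalence. Finally $(B \cup A_n) \cap B_n = (B \cap B_n) \cup A_n = U \cup A_n$, so $B \cup B_n$ is the pushout of $B \cup A_n \leftarrow A_n \cup U \to B_n$, and a last application of \ref{prop.pushout-seq} shows $B \cup A_n \to B \cup B_n$ is a Segal equivalence, completing the induction.

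I expect the only real work to be the bookkeeping with condition (1) --- identifying the intersections $(A \cup A_n) \cap B$, $A_n \cap U$, and $(B \cup A_n) \cap B_n$ so that the two invocations of the inductive hypothesis (for the families $\{(A_i, B_i)\}_{i<n}$ and $\{(\tilde A_i, \tilde B_i)\}_{i<n}$, each of size $n-1$) match the corners of the pushout squares. There is no analytic content beyond \ref{prop.pushout-seq} and two-out-of-three for Segal equivalences, and the recursion is well-founded because both recursive calls involve strictly fewer pairs.
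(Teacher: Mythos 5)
Your proof is correct and follows essentially the same route as the paper: induction on $n$, factoring the inclusion through an intermediate union, and using condition (1) to identify the intersections needed to apply \ref{prop.pushout-seq}. The only differences are organizational --- you peel off the last pair rather than the first, and your treatment of the second leg (via the auxiliary pairs $(\tilde A_i,\tilde B_i)$, an extra pushout, and two-out-of-three) is spelled out in somewhat more detail than the paper's corresponding step.
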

\begin{proof}
	We proceed by induction on the length of the sequence. If $n = 0$, then there is nothing to show. Suppose that the statement 
	holds for sequences of length $n-1$. Suppose that $(A_1, B_1),\dotsc, (A_n, B_n)$ are pairs in $S$. 
	Then we can consider the factorization
	$$
	\textstyle \bigcup_{i=1}^n A_i \rightarrow B_1 \cup \bigcup_{i=2}^n A_i \rightarrow \bigcup_{i=1}^n B_i.
	$$
	Note that the right map fits in a pushout square 
	\[
		\begin{tikzcd}
		\bigcup_{i=2}^n A_i \cap B_1 \arrow[r]\arrow[d] &  B_1 \cup \bigcup_{i=2}^n A_i  \arrow[d] \\
		\bigcup_{i=2}^n B_i \cap B_1 \arrow[r] & \bigcup_{i=1}^n B_i.
		\end{tikzcd}	
	\]
	In light of the property (1) of $S$ and the induction hypothesis, the left vertical map is a Segal equivalence. 
	Hence, the map on the right is a Segal equivalence as well on account of \ref{prop.pushout-seq}.
	It thus suffices to show that the map 
	$\bigcup_{i=1}^n A_i \rightarrow B_1\cup \bigcup_{i=2}^n A_i$ 
	is a Segal equivalence. This fits in a pushout square 
	\[
		\begin{tikzcd}
		\bigcup_{i=1}^n A_i \cap B_1 \arrow[r]\arrow[d] &  \bigcup_{i=1}^n A_i  \arrow[d] \\
		B_1 \arrow[r] &  B_1\cup \bigcup_{i=2}^n A_i.
		\end{tikzcd}	
	\]
	On account of property (1), the left map is the inclusion $A_1 \rightarrow B_1$, which is a Segal equivalence, so that another application of \ref{prop.pushout-seq} yields 
	the desired result.
\end{proof}

\section{Pasting shapes and their nerves}\label{section.pshapes}

We commence this section by introducing \textit{pasting shapes} and their \textit{nerves}. The remaining of this section will be devoted to studying basic properties 
of these pasting shapes and introducing the pasting theorem.

\begin{definition}
A  \textit{$(d,k)$-box} (resp.\ \textit{non-degenerate $(d,k)$-box}) is a pair $$(x,y) \in \N^{\times d} \times \N^{\times d}$$ 
such that $x_a = y_a$ for $d-k$ indices $a$, and $x_a \leq y_a$ (resp.\ $x_a < y_a$) for the remaining $k$ indices $a$. We denote the set of $(d,k)$-boxes by $B^{d,k}$.

A $(d,k-1)$-box $(x',y')$ is called a \textit{face} of a $(d,k)$-box $(x,y)$ if $x'_a, y'_a \in \{x_a,y_a\}$ for all indices $a$.

We say that two (non-degenerate) $(d,k)$-boxes $(x,y)$, $(x', y')$ are \textit{adjacent} 
if $x_a = y_a = x'_a = y'_a$ for $d-k$ indices $a$, and out of the remaining $k$ indices $a_1, \dotsc, a_k$, there is at most one index $i$ such that $x_{a_i}<y_{a_i}=x'_{a_i} < y'_{a_i}$ and for the other indices $j$, we have $x_{a_j} = x'_{a_j} < y'_{a_j} = y_{a_j}$. 
In this case $(x,y')$ is a non-degenerate $(d,k)$-box, which we will call the \textit{join} of $(x,y)$ and $(x',y')$.

\end{definition}

\begin{definition}
A \textit{$d$-dimensional pasting shape $I$} is a subset $$B^d(I) \subset B^{d,d}$$ of $(d,d)$-boxes
giving rise to a chain 
$$
B^0(I) \subset B^1(I) \subset \dotsb \subset B^d(I), 
$$
where $B^k(I) := B^{d,k} \cap B^d(I)$ are called the \textit{$k$-boxes in $I$},
satisfying the following two properties:
\begin{enumerate}
	\item \textit{closure under faces:} for every box $(x,y)$ in $I$, the boxes $(x',y')$ such that $x'_a,y'_a \in \{x_a, y_a\}$ for each index $a$, are in $I$,
	\item \textit{closure under joins:} if $(x,y)$ and $(x',y')$ are adjacent $k$-boxes in $I$, then its join $(x,y')$ must be contained in $I$.
\end{enumerate}
Note that $B^0(I)$ is the diagonal of a subset $V(I) \subset \N^{\times d}$, which we will call the \textit{vertices} of $I$. Any box $(x,y)$ in $I$ has $x,y \in V(I)$.

A \textit{map of pasting shapes} $f : I \rightarrow J$ between $d$-dimensional pasting shapes $I$ and $J$ is a map $f : V(I) \rightarrow V(J)$ between their underlying sets of vertices, 
such that for any box $(x,y)$ in $I$, $(f(x), f(y))$ is a box in $J$ with the property that 
$f(x)_a = f(y)_a$ whenever $x_a = y_a$. With these maps, the $d$-dimensional pasting shapes form a category which we will denote by
$\Shape^d$.
\end{definition}

\begin{example}
The $1$-dimensional pasting shapes may be identified with subposets of $\N$.
\end{example}

\begin{example}\label{ex.pshapes-graph}
We may view $d$-dimensional pasting shapes as being particular graphs equipped with extra markings. For instance, for $d=2$, the following graph 
\[
	\begin{tikzcd}
		(0,0)\arrow[d] \arrow[r, ""'name=f1] & (1,0) \arrow[d] \arrow[r, ""'name=v2] & (2,0) \arrow[rr, ""'name=f2] &  & (4,0)\arrow[d] \\
		(0,1) \arrow[r, ""name=t1]\arrow[dd] & |[""name=f3]|(1,1)\arrow[r] & (2,1)\arrow[dd] \arrow[r, ""'name=f4] & (3,1) \arrow[d] \arrow[r, ""'name=f5] & (4,1)\arrow[d] \\
		& & & (3,2)\arrow[d]\arrow[r, ""name=f6] & (4,2) \arrow[d]\\
		(0,3)\arrow[rr, ""name=t3] & & (2,3) \arrow[r,""name=t4] &(3,3)\arrow[r,""name=t6] & (4,3)
	\end{tikzcd}
\]
depicts the smallest $2$-dimensional pasting shape $I'$ whose vertices are given by the vertices in the graph, and contains every 1-box $(x,y)$ for which
there exists a (directed) path of edges between $x$ and $y$. Note that the resulting pasting shape has no non-degenerate 2-boxes.
To remedy this, our picture needs to reflect which 2-boxes need to be included in the associated pasting shape. 

In this article, we will use the convention to color the backgrounds of the 2-boxes we would like to include. Thus the following picture 
\[
	\begin{tikzcd}[execute at end picture={
		\scoped[on background layer]
		\fill[pamblue, opacity = 0.5] (a.center) -- (b.center) -- (d.center) -- (c.center);
	}]
		|[alias=a]|(0,0)\arrow[d] \arrow[r, ""'name=f1] & (1,0) \arrow[d] \arrow[r, ""'name=v2] & (2,0) \arrow[rr, ""'name=f2] &  &|[alias=b]|(4,0)\arrow[d] \\
		(0,1) \arrow[r, ""name=t1]\arrow[dd] & |[""name=f3]|(1,1)\arrow[r] & (2,1)\arrow[dd] \arrow[r, ""'name=f4] & (3,1) \arrow[d] \arrow[r, ""'name=f5] & (4,1)\arrow[d] \\
		& & & (3,2)\arrow[d]\arrow[r, ""name=f6] & (4,2) \arrow[d]\\
		|[alias=c]|(0,3)\arrow[rr, ""name=t3] & & (2,3) \arrow[r,""name=t4] &(3,3)\arrow[r,""name=t6] & |[alias=d]|(4,3)
	\end{tikzcd}
\]
depicts the $2$-dimensional pasting shape $I$ which contains $I'$ and all the non-degenerate 2-boxes whose faces are in $I$ and enclose a colored region in the picture. In this case, 
these are precisely all non-degenerate 2-boxes whose faces are in $I'$. In \ref{ex.boxdot}, we see an example 
of a colored graph where this is not the case.

For $d=3$, we can draw similar pictures. Accompanied with the data of $2$-boxes and $3$-boxes, the following graph 

\adjustbox{scale=0.8,center}{
	\begin{tikzcd}[column sep = tiny, row sep = tiny]
		|[alias=000]|(0,0,0)  &&& |[alias=100]|(1,0,0)   &&& |[alias=200]|(2,0,0)\\
		&|[alias=001]| (0, 0, 1)  &&&|[alias=101]| (1,0,1) &&& |[alias=201]|(2,0,1)\\
		& & |[alias=002]|(0,0,2) &&& |[alias=102]|(1,0,2) &&& |[alias=202]|(2,0,2) \\ 
		|[alias=010]|   &&& |[alias=110]|(1,1,0)  &&& |[alias=210]|(2,1,0)\\
		& |[alias=011]|(0, 1, 1)  &&& |[alias=111]|(1,1,1) &&& |[alias=211]|(2,1,1)\\
		& & |[alias=012]|(0,1,2) &&& |[alias=112]|(1,1,2) &&& |[alias=212]|(2,1,2) \\
		|[alias=020]|(0,2,0) &&& |[alias=120]|(1,2,0)  &&& |[alias=220]|(2,2,0)\\
		& |[alias=021]|(0, 2, 1)  &&&|[alias=121]| (1,2,1) &&& |[alias=221]|\\
		& & |[alias=022]|(0,2,2) &&& |[alias=122]|(1,2,2) &&& |[alias=222]|(2,2,2) \\
		\arrow[from=021, to=121, dotted]
		\arrow[from=121, to=122, dotted]
		\arrow[from=111, to =121, dotted]
		\arrow[from=120, to=121,dotted]
		\arrow[from=120, to=220, dotted]
		\arrow[from=110, to=120, dotted]
		\arrow[from=020, to=120, dotted]
		\arrow[from=220, to=222, dotted]
		\arrow[from=210, to=220, dotted]
		\arrow[from=210, to=211, dotted]
		\arrow[from=200, to=210, dotted]
		\arrow[from=110, to=210, dotted]
		\arrow[from=111, to=211, dotted]
		\arrow[from=100,to=110, dotted]
		\arrow[from=110, to=111, dotted]
		\arrow[from=111, to=112, dotted]
		\arrow[from=101, to=111, dotted]
		\arrow[from=201, to=211, dotted]
		\arrow[from=211, to=212, dotted]
		\arrow[from=100, to=200]
		\arrow[from=200, to=201]
		\arrow[from=201, to=202]
		\arrow[from=000, to=100]
		\arrow[from=100, to=101]
		\arrow[from=101, to=102]
		\arrow[from=102, to=112]
		\arrow[from=101, to=201]
		\arrow[from=102, to=202]
		\arrow[from=112, to=212]
		\arrow[from=202, to=212]
		\arrow[from=212, to=222]
		\arrow[from=002, to=102]
		\arrow[from=000, to=001]
		\arrow[from=001, to=101]
		\arrow[from=000, to=020]
		\arrow[from=001, to=011]
		\arrow[from=011, to=012]
		\arrow[from=001, to=002]
		\arrow[from=002,to=012]
		\arrow[from=020,to=021]
		\arrow[from=021, to=022]
		\arrow[from=011, to=021]
		\arrow[from=012, to=022]
		\arrow[from=112, to=122]
		\arrow[from=122, to=222]
		\arrow[from=022, to=122]
	\end{tikzcd}
}
would define a pasting shape $J$. The graph above determines $B^1(J)$, but it now becomes visually intractable to color 2- and 3-boxes in the graph. Instead, we will describe the sets of 2- and 3-boxes in this case. 
For instance, we may take
\begin{align*}
B^3(J) \setminus B^2(J) = N^3 := \{&((0,0,0),(1,2,2)), ((1,0,0),(2,1,1)),((1,0,1), (2,1,2)), \\  
&((1,0,0),(2,1,2)), ((1,1,0), (2,2,2)), ((1,0,0), (2,2,2)), \\ &((0,0,0), (2,2,2))\},
\end{align*} 
for the non-degenerate 3-boxes of $J$.  For the set $$B^2(J) \setminus B^1(J)$$ of non-degenerate 2-boxes of $J$, we can pick all those non-degenerate 2-boxes $(x,y)$ 
with the property that the faces of $(x,y)$ are in the graph above, and there exists a face $(\alpha,\omega) \in B^{3,2}$ of a box in $N^3$ such that $\alpha_a \leq x_a, y_a \leq \omega_a$ for all $a$. 
\end{example}

Note that pasting shapes have a notion of \textit{subshapes}, \textit{unions} and \textit{intersections}:

\begin{definition}
	Let $I$ be a $d$-dimensional pasting shape. A subshape $I' \subset I$ of $I$ is a $d$-dimensional pasting shape $I'$ such that $B^d(I') \subset B^d(I)$.
	
	A subshape $E \subset I$ is called a \textit{$k$-entire subshape of $I$} if there there exists a non-degenerate $k$-box 
	$(\alpha, \omega)$ whose $(k-1)$-faces are in $I$, with the property that any box $(x,y)$ of $I$ is contained in $E$ if and only if $\alpha_a \leq x_a,y_a\leq \omega_a$ for all $a$. 
	In this case, the pair $(\alpha, \omega)$ is called the pair of \textit{bounding box of $E$}. We say that $E$ is \textit{closed} 
	if $(\alpha, \omega) \in I$, in which case $(\alpha, \omega)$ is also contained in $E$. If $E$ is $k$-entire and every $k$-box of $E$ is degenerate, then $E$ is called \textit{open}.

	A $k$-entire subshape $V \subset I$ is called a $k$-\textit{vertebra} of $I$ if for any $k$-entire subshape $E \subset V$, we have $E = V$. 

	If $k=d$, we will drop $d$ from the notation, and consequently call $d$-entire subshapes and $d$-vertebrae of $I$, respectively, entire subshapes and vertebrae.
\end{definition}

\begin{remark}
For a $d$-dimensional pasting shape that belongs to a certain good class of pasting shapes, its vertebrae should be viewed as certain `indecomposable' $d$-dimensional subshapes so that taking their union will always recover the original pasting shape. We will make this precise in \ref{ssection.cpshapes} and \ref{ssection.ptheorem}.
\end{remark}

\begin{example}
	Consider the 2-dimensional pasting shape $I$ of \ref{ex.pshapes-graph} and its subshapes
	\[
		\begin{tikzcd}[execute at end picture={
			\scoped[on background layer]
			\fill[pamblue, opacity = 0.5] (a.center) -- (b.center) -- (d.center) -- (c.center);
		}]
			|[alias=a]| (2,1)\arrow[dd] \arrow[r, ""'name=f4] & (3,1) \arrow[dd] \arrow[r, ""'name=f5] & |[alias=b]|(4,1)\arrow[d] \\
			 &  & (4,2) \arrow[d]\\
			 |[alias=c]|(2,3) \arrow[r,""name=t4] &(3,3)\arrow[r,""name=t6] & |[alias=d]|(4,3)
		\end{tikzcd}
		\;\;\;\;\subset\;\;\;\;
	\begin{tikzcd}[execute at end picture={
		\scoped[on background layer]
		\fill[pamblue, opacity = 0.5] (a.center) -- (b.center) -- (d.center) -- (c.center);
	}]
		|[alias=a]| (2,1)\arrow[dd] \arrow[r, ""'name=f4] & (3,1) \arrow[d] \arrow[r, ""'name=f5] & |[alias=b]|(4,1)\arrow[d] \\
		 & (3,2)\arrow[d]\arrow[r, ""name=f6] & (4,2) \arrow[d]\\
		 |[alias=c]|(2,3) \arrow[r,""name=t4] &(3,3)\arrow[r,""name=t6] & |[alias=d]|(4,3)
	\end{tikzcd}
	\;\;\;\;\subset\;\;\;\; I.
	\]
	Then the subshape on the left is not entire in $I$, but the second is with bounding box given by $((2,1), (4,3))$.  Note that in this case, $I$ is entire in $I$. 
	The subshape 
	\[
	\begin{tikzcd}[execute at end picture={
		\scoped[on background layer]
		\fill[pamblue, opacity = 0.5] (a.center) -- (b.center) -- (d.center) -- (c.center);
	}]
		|[alias=a]| (2,1)\arrow[dd] \arrow[r, ""'name=f4] & |[alias=b]|(3,1) \arrow[d]  \\
		 & (3,2)\arrow[d] \\
		 |[alias=c]|(2,3) \arrow[r,""name=t4] &|[alias=d]| (3,3)
	\end{tikzcd}
	\;\;\;\;\subset\;\;\;\;\; I
	\]
	is an example of a vertebra. However, the subshape $$(0,1) \rightarrow (2,1)\;\;\;\;\; \subset\;\;\;\;\; I$$  is not an 1-vertebra of $I$ since it is not 
	1-entire: 
	the 1-box $(0,1) \rightarrow (2,1)$ can be obtained by joining the boxes $(0,1) \rightarrow (1,1)$ and $(1,1) \rightarrow (2,1)$ in $I$.
\end{example}

\begin{definition} 
	Let $I$ and $J$ be pasting shapes. Then we define the \textit{union of $I$ and $J$}, $I \cup J$,  to be the smallest pasting shape that contains both $I$ and $J$.
	Moreover, we define the \textit{intersection of $I$ and $J$}, $I \cap J$, to be the largest pasting shape that is contained in $I$ and $J$. Concretely, $I \cap J$ has boxes given by $B^d(I \cap J) = B^d(I) \cap B^d(J)$. 
\end{definition}

We have a few important pasting shapes. For natural numbers $n_1, \dotsc, n_d$, we 
define the $d$-dimensional pasting shape $$\square[n_1, \dotsc, n_d]$$
to be the largest pasting shape whose vertices 
are the $d$-tuples contained in $\{0, \dotsc, n_1\} \times \dotsb \times \{0, \dotsc, n_d\}$, i.e.\ its boxes are given by
$$B^d(\square[n_1, \dotsc, n_d]) = \{(x,y) \in B^{d,d} \mid 0 \leq x_a, y_a \leq n_a \text{ for all $a$}\}.$$ 
With the obvious structure maps, these pasting shapes assemble to a $d$-uple cosimplicial object in $\Shape^d$
$$
\Delta^{\times d} \rightarrow \Shape^d : (n_1, \dotsc, n_d) \mapsto \square[n_1, \dotsc, n_d].
$$
We will show that this cosimplicial object satisfies the following \textit{co-Segal property}: 
\begin{proposition}\label{prop.nerve-cosegal}
	The structure maps of $\square[\bullet, \dotsc, \bullet]$ witness the colimit
	$$
	\colim_{[k_a] \in \mathbb{G}/[n_a]} \square[n_1, \dotsc, n_{a-1}, k_a, n_{a+1} ,\dotsc, n_d] = \square[n_1,\dotsc, n_d].
	$$
	in $\Shape^d$.
\end{proposition}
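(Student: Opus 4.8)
The plan is to verify the universal property of the colimit by hand. Fix the index $a$ being varied and write $\square_{(i)} := \square[n_1, \dotsc, n_{a-1}, \{i\}, n_{a+1}, \dotsc, n_d]$ and $\square_{[i]} := \square[n_1, \dotsc, n_{a-1}, \{i \leq i+1\}, n_{a+1}, \dotsc, n_d]$ for the images in $\square[n_1, \dotsc, n_d]$ of the slices indexed by the objects of $\mathbb{G}/[n_a]$; so $\square_{(i)}$ ($0 \leq i \leq n_a$) consists of the boxes whose $a$-th coordinates are all equal to $i$, and $\square_{[i]}$ ($0 \leq i \leq n_a - 1$) of those whose $a$-th coordinates all lie in $\{i, i+1\}$. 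The category $\mathbb{G}/[n_a]$ is the \emph{fence} poset, with the $\square_{(i)}$ and $\square_{[i]}$ as values and with each of $\square_{(i)}, \square_{(i+1)}$ included into $\square_{[i]}$; all of these are subshape inclusions into $\square[n_1, \dotsc, n_d]$, so we already have a candidate cocone and must show it is universal.

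First I would construct the factorization on vertices. Let $T$ be a pasting shape equipped with a cocone, that is, a compatible family of maps $f_{(i)} \colon \square_{(i)} \to T$ and $f_{[i]} \colon \square_{[i]} \to T$. Because $V(\square[n_1, \dotsc, n_d])$ is the disjoint union over $i$ of the vertex sets of the $\square_{(i)}$, the $f_{(i)}$ assemble into a single function $f \colon V(\square[n_1, \dotsc, n_d]) \to V(T)$; and since the two vertex-slices of $\square_{[i]}$ together exhaust $V(\square_{[i]})$, cocone compatibility forces $f$ to restrict to $f_{[i]}$ on each $\square_{[i]}$ as well. Thus $f$ is the only possible underlying function of a map of cocones, and it does restrict correctly to every leg; it remains only to check that $f$ is a morphism of pasting shapes, i.e.\ that it carries boxes of $\square[n_1,\dotsc,n_d]$ to boxes of $T$ and preserves degenerate coordinates.

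This last point is the technical core, and the one genuine subtlety is that the legs $f_{[i]}$ need not be injective and so may \emph{collapse} the $a$-th direction. If a box $(x,y)$ satisfies $x_a = y_a$, it lies in a single $\square_{(i)}$ and there is nothing to prove; if $x_a < y_a$, write $m = y_a - x_a$ and let $r^{(j)}$ (resp.\ $s^{(j)}$), for $0 \leq j \leq m$, denote $x$ (resp.\ $y$) with its $a$-th coordinate reset to $x_a + j$. Then $(x,y)$ is the iterated join of the ``thin'' boxes $(r^{(j)}, s^{(j+1)})$, each of which lies in $\square_{[x_a + j]}$, so its $f$-image is a box of $T$; likewise each ``degenerate side'' $(r^{(j)}, s^{(j)})$ lies in $\square_{(x_a+j)}$. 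Comparing these images inside the slices, and using that the $f_{[i]}$ and $f_{(i)}$ are pasting-shape maps, one finds: the $a$-th coordinates of $f(r^{(j)})$ and $f(s^{(j)})$ agree — call the common value $t_j$ — and form a weakly increasing sequence $t_0 \leq t_1 \leq \dotsb \leq t_m$; while for each $b \neq a$ the $b$-th coordinate of $f(r^{(j)})$ is independent of $j$, and so is that of $f(s^{(j)})$. Hence $f(x)$ and $f(y)$ are determined by these constants together with $t_0$ and $t_m$, and $(f(x), f(y))$ is recovered as the iterated join, taken inside $T$, of exactly those thin image-boxes $(f(r^{(j)}), f(s^{(j+1)}))$ across which $t_j < t_{j+1}$: these are non-degenerate in the $a$-th direction and, since the intermediate $t$-values are constant between consecutive such $j$, pairwise adjacent, so their join lies in $T$ by closure under joins and equals $(f(x), f(y))$. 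The degeneracy condition on $f$ drops out of the same comparison (if $x_b = y_b$ with $b \neq a$ then the two relevant constants coincide, and the case $b = a$ is immediate). Together these show $f$ is the required unique factorization.

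I expect essentially all of the work to sit in the last paragraph: the constancy statements about the coordinates of $f(r^{(j)})$ and $f(s^{(j)})$ and the ensuing adjacency bookkeeping are precisely what is needed to accommodate cocones whose legs collapse the varying direction. Identifying $\mathbb{G}/[n_a]$ as a fence, writing down the forced vertex map, and checking cocone compatibility are all formal.
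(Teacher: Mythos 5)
Your argument is correct, and its engine coincides with the paper's: slice a box that is non-degenerate in the $a$-th direction into unit-thick boxes, each living in a single leg of the fence, and reassemble the image using closure under joins. The packaging differs, though. The paper first proves \ref{prop.simplices-nerve}, identifying $\Shape^d(\square[n_1,\dotsc,n_d],I)$ with tuples of poset maps $(f_1,\dotsc,f_d)$ subject to a box condition; in particular every map out of a standard grid is of product form $f(x)=(f_1(x_1),\dotsc,f_d(x_d))$. The colimit statement then reduces to comparing two subsets of $\prod_a \cat{Poset}([n_a],\N)$ --- tuples for which every box with coordinates in the images lies in $I$, versus tuples for which only the boxes unit-thick in direction $a$ need to --- and the join argument closes that gap in one line. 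You instead verify the universal property directly against an arbitrary cocone, so you must re-derive by hand the structural facts that \ref{prop.simplices-nerve} would supply: that $f(r^{(j)})_a = f(s^{(j)})_a =: t_j$ increases weakly in $j$, and that the remaining coordinates of $f(r^{(j)})$ and of $f(s^{(j)})$ are constant in $j$. Your derivation of these from degeneracy-preservation of the cocone legs is sound, so the proof goes through; it is more self-contained but longer, whereas the paper's route amortizes the cost of \ref{prop.simplices-nerve}, which it needs elsewhere anyway. One degenerate case to patch in your last step: if all the $t_j$ coincide, the ``iterated join of the thin image-boxes across which $t_j < t_{j+1}$'' is an empty join; in that case $(f(x),f(y))$ simply equals any one of the (pairwise equal) thin image-boxes $(f(r^{(j)}),f(s^{(j+1)}))$, which is already a box of $T$, so nothing is lost.
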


To prove the above proposition, we will make use of the following description 
of maps out of $\square[n_1,\dotsc, n_d]$:

\begin{proposition}\label{prop.simplices-nerve}
	There is a bijection
	$$
	\textstyle \Shape^d(\square[n_1, \dotsc, n_d], I) \rightarrow \Hom_I(\coprod_{1\leq a\leq d} [n_a], \N),
	$$
	natural in $([n_1], \dotsc, [n_d]) \in \Delta^{\times d}$, where the right-hand side 
	is the subset $$\textstyle\Hom_I(\coprod_{1\leq a\leq d} [n_a], \N)\subset \cat{Poset}(\coprod_{1\leq a\leq d}[n_a],\N) = \prod_{1\leq a\leq d}\cat{Poset}([n_a], \N)$$ of those tuples $(f_1, \dotsc, f_d)$ which have the property that 
	each box $(x,y)$ with $x_a, y_a \in \im(f_a)$ for all $a$, is contained in $I$.
	\end{proposition}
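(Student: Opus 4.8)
The plan is to write down the claimed bijection explicitly and verify it works. I first treat a map of pasting shapes $f : \square[n_1, \dotsc, n_d] \rightarrow I$ as a function on vertices $f : \{0,\dotsc,n_1\}\times\dotsb\times\{0,\dotsc,n_d\} \rightarrow V(I) \subset \N^{\times d}$, and write $f^{(a)}$ for its $a$-th coordinate. The crucial first step — and the one I expect to carry the weight of the argument — is to show that $f^{(a)}$ depends only on the $a$-th input: if two vertices $x, x'$ differ in exactly one coordinate $b \neq a$, then one of $(x,x')$, $(x',x)$ is a box of $\square[n_1,\dotsc,n_d]$ whose two endpoints have the same $a$-th coordinate, so the defining property of a map of pasting shapes gives $f^{(a)}(x) = f^{(a)}(x')$; since any two vertices that agree in coordinate $a$ are joined by a sequence of such single-coordinate moves, each avoiding coordinate $a$, it follows that $f^{(a)}(x) = f_a(x_a)$ for a well-defined function $f_a : [n_a] \rightarrow \N$. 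Applying the defining property instead to the boxes that vary only in coordinate $a$ shows that each $f_a$ is monotone, so that $(f_1, \dotsc, f_d) \in \prod_{a} \cat{Poset}([n_a], \N) = \cat{Poset}(\coprod_a [n_a], \N)$. This simultaneously defines the forward map $f \mapsto (f_1, \dotsc, f_d)$.

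Next I would check that this tuple lies in the subset $\Hom_I(\coprod_a [n_a], \N)$. Given a box $(u,v) \in B^{d,d}$ with $u_a, v_a \in \im(f_a)$ for all $a$: since $f_a$ is monotone its fibers are convex, and because $u_a \leq v_a$ I may choose $x_a \in f_a^{-1}(u_a)$ and $y_a \in f_a^{-1}(v_a)$ with $x_a \leq y_a$ (taking $x_a = y_a$ when $u_a = v_a$). Then $(x,y)$ is a box of $\square[n_1,\dotsc,n_d]$, so $(f(x),f(y)) = (u,v)$ is a box of $I$, as required. Conversely, from a tuple $(f_1,\dotsc,f_d) \in \Hom_I(\coprod_a [n_a], \N)$ I build $f(i_1,\dotsc,i_d) := (f_1(i_1),\dotsc,f_d(i_d))$: monotonicity of the $f_a$ carries each box of $\square[n_1,\dotsc,n_d]$ to a $(d,d)$-box respecting the degeneracy condition $f(x)_a = f(y)_a$ whenever $x_a = y_a$, the membership in $\Hom_I$ forces these image boxes to lie in $I$, and applying that condition to the diagonal boxes shows that $f$ takes values in $V(I)$; hence $f$ is a map of pasting shapes.

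It then remains to observe that these two constructions are mutually inverse — extracting the tuple from a product-form map returns the same components, and assembling the product-form map from the tuple extracted from a general $f$ recovers $f$ by the first step — and that both are natural in $([n_1],\dotsc,[n_d]) \in \Delta^{\times d}$, since precomposing $f$ with the structure map induced by a morphism $[m_a]\rightarrow[n_a]$ corresponds on the other side to precomposing each $f_a$ with $[m_a]\rightarrow[n_a]$. Apart from the "separation of variables" in the first step, the remaining work is routine bookkeeping; the two places that call for a little care are the connectivity of the product poset $\{0,\dotsc,n_1\}\times\dotsb\times\{0,\dotsc,n_d\}$ under single-coordinate moves, and the choice of comparable fiber representatives in the $\Hom_I$-step.
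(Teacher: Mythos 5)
Your proposal is correct and follows essentially the same route as the paper: both proofs hinge on the ``separation of variables'' identity $f(x) = (f_1(x_1), \dotsc, f_d(x_d))$ and then set up the explicit mutually inverse correspondence. The only cosmetic difference is that the paper derives this identity in one step from the box $(p^a, x)$ with $p^a = (0,\dotsc,0,x_a,0,\dotsc,0)$, whereas you use a chain of single-coordinate moves; your write-up is otherwise a slightly more detailed version of the same argument (e.g.\ in explicitly checking that the tuple lands in $\Hom_I$).
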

	\begin{proof}
	The key observation is the following. Let  $f : \square[n_1, \dotsc, n_d] \rightarrow I$ be a map of pasting shapes, and define 
	$$f_a(i) := f(0, \dotsc, 0, i, 0, \dotsc, 0)_a.$$
	Then we claim that the image of a vertex $x \in \square[n_1, \dotsc, n_d]$ is given by 
	$
	f(x) = (f_1(x_1), \dotsc, f_d(x_d)).
	$
	Indeed, for any index $1\leq a \leq d$, we may consider the projection $p^a := (0, \dotsc, 0, x_a, 0, \dotsc, 0)$ of $x$.
	Then $(p^a, x)$ is a $(d-1)$-box of $\square[n_1,\dotsc, n_d]$. By the definition of pasting maps, we must then have that 
	$f(p^a)_a = f(x)_a$. By definition, we have $f_a(x_a) = f(p^a)_a$ so that $f(x)_a = f_a(x_a)$.

	The correspondence is now given as follows: we carry a map of pasting shapes $f : \square[n_1, \dotsc, n_d] \rightarrow I$ to the tuple $(f_1, \dotsc, f_d)$ 
	defined above. Note that each $f_a$ is indeed a map of posets $[n_a] \rightarrow \N$.
	There is an inverse to this correspondence: we may carry a tuple $(f_1, \dotsc, f_d) \in \Hom_I(\coprod [n_a], \N)$, 
	to the unique map of pasting shapes $f : \square[n_1, \dotsc, n_d] \rightarrow I$ which is given on vertices by $f(x) := (f_1(x_1), \dotsc, f_d(x_d)).$
\end{proof}

\begin{proof}[Proof of \ref{prop.nerve-cosegal}]
	Let $I$ be a $d$-dimensional pasting shape, and let us write $H_k$ for the set of tuples  
	$
	 \Hom_I([n_1] \sqcup \dotsc \sqcup [n_{a -1 }] \sqcup [k] \sqcup [n_{a+1}] \sqcup \dotsc \sqcup [n_d], \N)$. In light of \ref{prop.simplices-nerve}, we have to verify that the canonical map 
	$$
	\textstyle H_{n_a} \rightarrow H_{1} \times_{H_0} \dotsc \times_{H_0} H_1
	$$ is a bijection. Note that $[n_a]$ can be written as the iterated pushout
	$
	[1] \cup_{[0]} \dotsc \cup_{[0]} [1]
	$
	of posets. Hence, the canonical map can be identified with the inclusion 
	$$
	\textstyle H_{n_a} \rightarrow H'_{n_a},
	$$
	where the subset $H'_{n_a} \subset \Hom(\coprod_{1\leq a\leq d} [n_a], \N)$ contains those tuples $(f_1, \dotsc, f_d)$ which have the following property: if $(x,y)$ is 
	a box so that for each index $b$, there exist indices $i_b \leq j_b$ so that 
	$$x_b = f_b(i_b),\quad  y_b = f_b(j_b),$$ and 
	$j_b - i_b \leq 1$ if $b=a$, then $(x,y)$ is contained in $I$. One now readily verifies 
	that $H'_{n_a} = H_{n_a}$ by making use of the fact that pasting shapes are closed under joins.
\end{proof}

This observation now leads to the following construction:

\begin{construction}
	Given a $d$-dimensional pasting shape $I$, we define a $d$-uple simplicial set $[I]$, the \textit{nerve of $I$}, by setting 
	$$[I]_{n_1, \dotsc, n_d} := \Shape^d(\square[n_1, \dotsc, n_d], I).$$
	On account of \ref{prop.nerve-cosegal}, $[I]$ is a $d$-uple Segal set. This is a canonically functorial construction, so that 
	we obtain a functor
	$$[-] : \Shape^d \rightarrow \Cat^d(\cat{Set})$$
	that carries pasting shapes to their nerves. 
	Note that the simplices of the nerve may be described using \ref{prop.simplices-nerve}. In particular, 
	one can use this description to readily verify that $[-]$ is fully faithful.
\end{construction}

\begin{corollary}\label{cor.cosimp-objs}
	The $d$-uple cosimplicial object $[\square[\bullet, \dotsc, \bullet]]$ concides with $\Delta[\bullet, \dotsc, \bullet]$.
\end{corollary}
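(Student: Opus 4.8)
The plan is to unwind both sides of the claimed identity of $d$-uple cosimplicial objects and observe that they agree on objects and on structure maps essentially by construction. Recall that, by definition, $[\square[n_1,\dotsc,n_d]]$ is the $d$-uple simplicial set whose $(m_1,\dotsc,m_d)$-simplices are the maps of pasting shapes $\Shape^d(\square[m_1,\dotsc,m_d], \square[n_1,\dotsc,n_d])$, while $\Delta[n_1,\dotsc,n_d]$ is the discrete presheaf represented by $([n_1],\dotsc,[n_d])$, whose $(m_1,\dotsc,m_d)$-simplices are $\prod_{a} \Delta([m_a],[n_a]) = \prod_a \cat{Poset}([m_a],[n_a])$.

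First I would invoke \ref{prop.simplices-nerve} with $I = \square[n_1,\dotsc,n_d]$. This identifies $\Shape^d(\square[m_1,\dotsc,m_d], \square[n_1,\dotsc,n_d])$ with the subset of $\prod_a \cat{Poset}([m_a],\N)$ consisting of those tuples $(f_1,\dotsc,f_d)$ such that every box $(x,y)$ with $x_a,y_a \in \im(f_a)$ is a box of $\square[n_1,\dotsc,n_d]$. But by the explicit description of $B^d(\square[n_1,\dotsc,n_d])$, a box $(x,y) \in B^{d,d}$ lies in $\square[n_1,\dotsc,n_d]$ if and only if $0 \leq x_a,y_a \leq n_a$ for all $a$; hence the constraint on the tuple $(f_1,\dotsc,f_d)$ is simply that $\im(f_a) \subset \{0,\dotsc,n_a\}$ for each $a$, i.e. that each $f_a$ factors (uniquely) through the poset inclusion $[n_a] \hookrightarrow \N$. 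Thus the subset in question is exactly $\prod_a \cat{Poset}([m_a],[n_a])$, which is the set of $(m_1,\dotsc,m_d)$-simplices of $\Delta[n_1,\dotsc,n_d]$. This gives the level-wise bijection.

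Next I would check that this bijection is compatible with the cosimplicial structure maps in $([n_1],\dotsc,[n_d]) \in \Delta^{\times d}$ and with the $d$-uple simplicial structure maps in $([m_1],\dotsc,[m_d])$. Both are straightforward: the naturality in $([m_1],\dotsc,[m_d])$ is part of the statement of \ref{prop.simplices-nerve} (the bijection there is asserted to be natural), and it matches precomposition of tuples $(f_1,\dotsc,f_d)$ by maps $[m'_a] \to [m_a]$, which is precisely the simplicial structure of the represented presheaf $\Delta[n_1,\dotsc,n_d]$. For the cosimplicial structure in $([n_1],\dotsc,[n_d])$, a map $[n_a] \to [n'_a]$ in $\Delta$ induces the map $\square[n_1,\dotsc,n_d] \to \square[n_1,\dotsc,n'_a,\dotsc,n_d]$ of pasting shapes, and postcomposition by it corresponds, under the identification above, to postcomposition of $f_a$ with $[n_a] \hookrightarrow [n'_a] \hookrightarrow \N$; on the represented presheaves this is exactly the functoriality of $([n_1],\dotsc,[n_d]) \mapsto \Delta[n_1,\dotsc,n_d]$. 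Assembling these, the level-wise bijections promote to an isomorphism of $d$-uple cosimplicial objects $[\square[\bullet,\dotsc,\bullet]] \cong \Delta[\bullet,\dotsc,\bullet]$.

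The only mild subtlety — and the closest thing to an obstacle — is bookkeeping: one must make sure that the bijection of \ref{prop.simplices-nerve} is being applied with the roles of source and target as intended (the representing object $\square[n_1,\dotsc,n_d]$ sits in the target slot, which is where the constraint "boxes must lie in $I$" bites), and that the two evident poset inclusions $[n_a] \hookrightarrow \N$ being composed on either side really do cancel against each other so that the identification is strictly natural rather than natural up to a discrepancy. Since everything in sight is discrete, there are no coherence issues, and the verification is purely combinatorial. I would present this as a short paragraph rather than a displayed computation.
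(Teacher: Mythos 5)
Your proposal is correct and follows essentially the same route as the paper: the paper's proof is the one-line observation that \ref{prop.simplices-nerve} identifies $[\square[m_1,\dotsc,m_d]]_{n_1,\dotsc,n_d}$ with $\prod_a \Delta([n_a],[m_a])$ naturally in all variables, which is exactly your level-wise identification (with the roles of the letters $m$ and $n$ swapped) plus the naturality bookkeeping you spell out. Your extra care in checking that the box condition reduces to $\im(f_a)\subset\{0,\dotsc,n_a\}$ and that the two naturalities match is exactly what the paper leaves implicit.
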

\begin{proof}
	In light of \ref{prop.simplices-nerve}, we see that $$[\square[m_1, \dotsc, m_d]]_{n_1, \dotsc, n_d} \cong \prod_{1 \leq a \leq d}\Delta([n_a], [m_a]),$$ 
	natural in all variables.
\end{proof}

\subsection{Truncations of pasting shapes and grids}
In order to define a good class of pasting shapes, we need to introduce the building blocks of 
these shapes: so-called \textit{grids}. These are pasting shapes that are in some sense shaped by the standard grids $\square[n_1,\dotsc,n_d]$.
Let us start with an illustrative example. Consider the obvious injective map of pasting shapes
\[
	\square[2,1] \;\;\;\;\;\;\;\;\;\;\; \xrightarrow{\makebox[45pt]{}} \;\;\;\;\;\;\;\;\;\;\;
	\begin{tikzcd}[execute at end picture={
		\scoped[on background layer]
		\fill[pamblue, opacity = 0.5] (a.center) -- (b.center) -- (d.center) -- (c.center);
	}]
		|[alias=a]|(0,0) \arrow[r]\arrow[d] & (1,0)\arrow[dd] \arrow[r] & |[alias=b]|(2,0)\arrow[dd] \\
		(0,1) \arrow[d]\\
		|[alias=c]|(0,2) \arrow[r] & (1,2) \arrow[r] &|[alias=d]|(2,2).
	\end{tikzcd}
\]
Note that this map does not create any new 2-cells, but factors one of the boundary 1-cells. The codomain is in this sense shaped by the domain (modulo boxes of dimension $\leq 1$, they agree), and would be an example of a grid. We want to make this precise and capture this notion.
First, we observe that $d$-uple simplicial sets support a notion of dimension:

\begin{definition}
	Suppose that $S$ is a $d$-uple simplicial set. Then a $d$-uple simplex $\sigma : \Delta[n_1, \dotsc, n_d] \rightarrow S$ 
	is called \textit{of dimension $\leq k$} if there exists a factorization 
	$$
	\Delta[n_1, \dotsc, n_d] \rightarrow \Delta[m_1, \dotsc, m_d] \rightarrow S
	$$
	of $\sigma$
	such that there are at most $k$ indices $i$ such that $m_i \neq 0$. 
	Moreover, we say that $\sigma$ is \textit{of dimension $k$} if it is of dimension $\leq k$ but not of dimension $\leq (k-1)$.
	The $d$-uple simplicial subset of $S$ containing those $d$-uple simplices of dimension $\leq k$ is called the \textit{$k$-truncation of $S$} and denoted by $$\dtr_{\leq k} S \subset S.$$ 
\end{definition}

Whenever $S$ is the nerve of a pasting shape, the truncation $\dtr_{\leq k}S$ can be described as follows.

\begin{definition}
Let $I$ be a $d$-dimensional pasting shape. Then for $0 \leq k \leq d$, we define its \textit{$k$-truncation} $\dtr_{\leq k}I \subset I$ to be the pasting shape with boxes 
$$
B^i(\dtr_{\leq k}I) = \begin{cases}
	B^i(I) & \text{if $i \leq k$}, \\
	B^k(I) & \text{if $i \geq k$}.
\end{cases}
$$
We say that $I$ is \textit{k-truncated} if $\dtr_{\leq k}I = I$, i.e., whenever every $l$-box of $I$ is degenerate for $l > k$.
\end{definition}

\begin{example}
	The following is an important class of examples of $k$-truncated $d$-dimensional pasting shapes. For $d-k$ different indices $1 \leq a_1, \dotsc, a_{d-k} \leq d$
	and integers $c_1, \dotsc, c_{d-k}$, we can consider the $k$-dimensional hyperplane $H$ whose boxes are given by
	$$
	B^d(H) = \{(x,y) \in B^{d,d} \mid x_{a_i} = y_{a_i} = c_i \text{ for all $i = 1,\dotsc, d-k$}\}.
	$$
	We will denote the set of $k$-dimensional hyperplanes by $\mathscr{H}_k$.
\end{example}

The following can readily be deduced from the two definitions of truncations that we have seen above:

\begin{proposition}\label{prop.desc-trunc}
Suppose that $I$ is a $d$-dimensional pasting shape, and let $0 \leq k \leq d$. Then 
$$
\dtr_{\leq k}I = \bigcup_{H \in \mathscr{H}_k} I \cap H,
$$
and 
$$
\dtr_{\leq k}[I] = [\dtr_{\leq k} I]= \bigcup_{H \in \mathscr{H}_k} [I\cap H].
$$
\end{proposition}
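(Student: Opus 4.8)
The plan is to reduce all three identities to the combinatorics of box sets together with the description of the simplices of a nerve in \ref{prop.simplices-nerve}, treating the equalities in turn. Throughout I abbreviate $\vec n = (n_1, \dotsc, n_d)$, and I recall from \ref{prop.simplices-nerve} (via \ref{cor.cosimp-objs}) that an $\vec n$-simplex of $[I]$ is a tuple $(f_1, \dotsc, f_d)$ of poset maps $f_a \colon [n_a] \to \N$ all of whose \emph{spanned boxes} --- the boxes $(x,y)$ with $x_a, y_a \in \im(f_a)$ for all $a$ --- lie in $I$, with the $d$-uple simplicial structure acting by precomposition in each coordinate; among these, the \emph{maximal} spanned box is $(x^\ast, y^\ast)$ with $x^\ast_a = \min\im(f_a)$ and $y^\ast_a = \max\im(f_a)$, and it has the fewest equal coordinates.

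First I would dispatch $\dtr_{\leq k}I = \bigcup_{H \in \mathscr{H}_k} I \cap H$ purely combinatorially. Using that intersections of pasting shapes are computed on box sets, $B^d(I \cap H) = B^d(I) \cap B^d(H)$, and that a $(d,d)$-box lies in $B^d(H)$ for some $k$-dimensional hyperplane $H$ exactly when it has at least $d-k$ coordinates on which its endpoints agree --- that is, exactly when it lies in $B^{d,k}$ --- one gets $\bigcup_H B^d(I \cap H) = B^{d,k} \cap B^d(I) = B^k(I) = B^d(\dtr_{\leq k}I)$. Since $\dtr_{\leq k}I$ is a pasting shape and contains every $I \cap H$, it contains their union; the displayed equality of box sets then yields the reverse inclusion, so the two shapes agree.

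Next I would prove $\dtr_{\leq k}[I] = [\dtr_{\leq k}I]$ by spelling out both conditions on a tuple $(f_1, \dotsc, f_d) \in [I]_{\vec n}$. On the one hand, such a simplex is of dimension $\leq k$ --- i.e.\ factors through some $\Delta[m_1, \dotsc, m_d] \to [I]$ with at most $k$ of the $m_i$ nonzero --- if and only if at most $k$ of the $f_a$ are non-constant: for the nontrivial implication one factors each $f_a$ as a surjection onto its image followed by an injection, takes $m_a = 0$ precisely in the coordinates where $f_a$ is constant, and notes that the intermediate tuple spans the same boxes and hence is again a simplex of $[I]$. On the other hand, $(f_1, \dotsc, f_d)$ is a simplex of $[\dtr_{\leq k}I]$ iff all its spanned boxes lie in $B^k(I) = B^{d,k} \cap B^d(I)$; as the maximal spanned box has the fewest equal coordinates, this holds iff the maximal box lies in $B^{d,k}$, i.e.\ iff $x^\ast_a = y^\ast_a$ --- equivalently $f_a$ is constant --- for at least $d-k$ indices $a$. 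The two simplicial subsets of $[I]$ are thus cut out by the same condition.

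Finally, $[\dtr_{\leq k}I] = \bigcup_{H \in \mathscr{H}_k}[I \cap H]$ is checked levelwise with the same bookkeeping: a tuple lies in $[I \cap H]_{\vec n}$ for the hyperplane $H$ cut out by $x_{a_i} = y_{a_i} = c_i$ exactly when all its spanned boxes lie in $B^d(I)$ and each $f_{a_i}$ is the constant map at $c_i$, so membership in $\bigcup_H [I \cap H]_{\vec n}$ amounts to $(f_1, \dotsc, f_d) \in [I]_{\vec n}$ together with the existence of $d-k$ coordinates on which the tuple is constant --- which by the previous paragraph is precisely membership in $[\dtr_{\leq k}I]_{\vec n}$, the required constant coordinates being read off from the maximal spanned box. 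The one step requiring care is exactly this last identity: it does not follow formally from the first, since $[-]$ does not preserve arbitrary unions of subshapes, and must be established by hand through \ref{prop.simplices-nerve}; what makes it go through is that a simplex constrained to land in $\dtr_{\leq k}I$ automatically has enough constant coordinates to land in a single $I \cap H$.
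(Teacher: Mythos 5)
Your proof is correct, and it is exactly the direct verification the paper has in mind: the paper offers no written proof, stating only that the proposition "can readily be deduced from the two definitions of truncations," and your argument supplies those details by unwinding the box-set definitions and the description of simplices from \ref{prop.simplices-nerve}. In particular, you correctly handle the two points that genuinely require a moment's care --- that the union of the $I \cap H$ as pasting shapes does not exceed $\dtr_{\leq k}I$ because the latter is itself a pasting shape containing each $I\cap H$, and that a simplex of $[\dtr_{\leq k}I]$ lands in a single $[I\cap H]$ because its maximal spanned box already forces $d-k$ constant coordinates.
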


\begin{remark}\label{rem.hyperplane-embedding}
	Suppose that we have a tuple $h \in \N^{\times k}$ of indices $1 \leq h_1 <  \dotsb < h_{k} \leq d$. Then we obtain a 
	map of posets $m_{h} : \N^{\times k} \rightarrow \N^{\times d}$ so that $m_h(x)_{j} = x_i$ if $j= h_i$ and $m_h(x)_{j} = 0$ otherwise. 
	This map gives rise to a fully faithful functor
	$$
	i_h : \Shape^k \rightarrow \Shape^d 
	$$
	that carries a $k$-dimensional pasting shape $I$ to the $k$-truncated $d$-dimensional pasting shape whose boxes are given by the image 
	of the boxes of $I$ under $m_h \times m_h$. The essential image of $i_h$ is given by those $d$-dimensional pasting shapes that are contained in some hyperplane $H \in \mathcal{H}_k$ 
	whose associated indices are different from the $h_i$'s. 
	Hence, will implicitly view any $d$-dimensional pasting shape that is contained in some hyperplane $H \in \mathcal{H}_k$ as a $k$-dimensional pasting shape. 
	Note that the embedding $i_h$ is compatible with the nerve functors, in the sense that we have a commutative square 
	\[
		\begin{tikzcd}
			\Shape^k \arrow[r, "i_h"]\arrow[d,"{[-]}"'] & \Shape^d\arrow[d,"{[-]}"] \\
			\Cat^k(\cat{Set}) \arrow[r, "p_h^*"] & \Cat^d(\cat{Set}),
		\end{tikzcd}
	\]
	where $p_h^*$ is as constructed in \ref{ssection.cat-objs}. 
\end{remark}

Using the notion of truncations, we can give a description of what it means for a $d$-dimensional pasting shape $I$ to shape 
a $d$-dimensional pasting shape $J$ modulo $(d-1)$-boxes.

\begin{proposition}
	Let $f : I \rightarrow J$ be an injective map of $d$-dimensional pasting shapes. Then the following are equivalent: 
\begin{enumerate}[noitemsep]
		\item every non-degenerate $d$-box of $J$ is the image of a non-degenerate $d$-box in $I$,
		\item $J$ can be written as $$J = f(I) \cup \dtr_{\leq d-1}J,$$
		\item every $d$-dimensional simplex of $[J]$ lies in $f([I])$,
		\item the nerve of $J$ can be written as $$[J] = f([I]) \cup \dtr_{\leq d-1}[J].$$
\end{enumerate}
\end{proposition}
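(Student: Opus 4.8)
The plan is to reduce to the case where $f$ is the inclusion of a sub-pasting-shape $I\subseteq J$, and then establish the equivalences $(1)\Leftrightarrow(2)$, $(1)\Leftrightarrow(3)$ and $(3)\Leftrightarrow(4)$. Since $f$ is injective it identifies $I$ with a sub-pasting-shape of $J$, so I may assume $f$ is an inclusion $I\subseteq J$; note that then $f([I])=[I]$ is a sub-$d$-uple simplicial set of $[J]$, as a $d$-uple simplex of $[I]$ is just a map $\square[n_1,\dotsc,n_d]\to J$ that factors through $I$. Two elementary observations will be used repeatedly. First, $B^d(\dtr_{\leq d-1}J)=B^{d-1}(J)$ is exactly the set of \emph{degenerate} $d$-boxes of $J$. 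Second, by \ref{prop.simplices-nerve} a $d$-uple simplex $\sigma$ of $[J]$ is the same datum as a tuple $(g_1,\dotsc,g_d)$ of poset maps $g_a\colon[n_a]\to\N$ all of whose \emph{spanned boxes} — the boxes $(x,y)$ with $x_a,y_a\in\im(g_a)$ for every $a$ — lie in $J$; unwinding the definition of dimension, $\sigma$ has dimension exactly $d$ precisely when every $g_a$ is non-constant, so $\dtr_{\leq d-1}[J]$ consists of those $\sigma$ for which some $g_a$ is constant.

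For $(1)\Rightarrow(2)$: every box of $J$ is either a degenerate $d$-box, hence in $\dtr_{\leq d-1}J$, or a non-degenerate $d$-box, hence in $I$ by (1); so $B^d(J)\subseteq B^d(I\cup\dtr_{\leq d-1}J)\subseteq B^d(J)$, whence $I\cup\dtr_{\leq d-1}J=J$. For $(2)\Rightarrow(1)$ I would first record the small lemma that if $A,B$ are sub-pasting-shapes of some pasting shape and $B$ has no non-degenerate $d$-box, then $A$ and $A\cup B$ have the same non-degenerate $d$-boxes: this follows by a short induction along the construction of $A\cup B$ out of $B^d(A)\cup B^d(B)$ by closing under faces and joins, using that taking faces never produces a $d$-box and that a join yielding a non-degenerate $d$-box must have non-degenerate $d$-boxes as both summands. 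Applying this with $A=I$ and $B=\dtr_{\leq d-1}J$ shows that every non-degenerate $d$-box of $J=I\cup\dtr_{\leq d-1}J$ lies in $I$; since degeneracy of a box is intrinsic, this is (1).

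The equivalence $(3)\Leftrightarrow(4)$ is a direct unwinding: for the sub-$d$-uple simplicial sets $f([I])$ and $\dtr_{\leq d-1}[J]$ of $[J]$, the equality $[J]=f([I])\cup\dtr_{\leq d-1}[J]$ holds iff every simplex of $[J]$ lies in $[I]$ or has dimension $\leq d-1$, that is, iff every $d$-dimensional simplex lies in $[I]$. For $(3)\Rightarrow(1)$, a non-degenerate $d$-box $(x,y)$ of $J$ yields the tuple $g_a\colon[1]\to\N$, $g_a(0)=x_a$, $g_a(1)=y_a$, whose spanned boxes are the boxes $(x',y')$ with $x'_a,y'_a\in\{x_a,y_a\}$ and hence lie in $J$ by closure under faces; this is a genuine simplex of $[J]$ of dimension $d$, so by (3) it factors through $[I]$, forcing $(x,y)\in I$. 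For $(1)\Rightarrow(3)$, let $\sigma$ be a $d$-dimensional simplex of $[J]$ with tuple $(g_1,\dotsc,g_d)$, all $g_a$ non-constant; I must show that each spanned box $(x,y)$ of $\sigma$ lies in $I$. If $(x,y)$ is non-degenerate this is (1). Otherwise the decisive step is to \emph{fatten} $(x,y)$: for each index $a$ with $x_a<y_a$ keep $w_a=x_a$, $w'_a=y_a$, and for each index $a$ with $x_a=y_a$ use $\lvert\im(g_a)\rvert\geq 2$ to pick $w_a<w'_a$ in $\im(g_a)$ with $x_a\in\{w_a,w'_a\}$; then $(w,w')$ is a non-degenerate $d$-box spanned by $\sigma$, so $(w,w')\in J$ and hence $(w,w')\in I$ by (1), while $(x,y)$ is a face of $(w,w')$, so $(x,y)\in I$ by closure under faces.

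The step carrying the genuine content is $(1)\Rightarrow(3)$, and inside it the fattening construction that disposes of the degenerate boxes spanned by a top-dimensional simplex; the inductive lemma for $(2)\Rightarrow(1)$ is the only other place where some care is needed. It is worth emphasizing why the argument routes through $(1)$ and $(3)$ at all rather than deducing $(3)\Leftrightarrow(4)$ from $(1)\Leftrightarrow(2)$ by applying the nerve: the functor $[-]$ does \emph{not} send a union of sub-pasting-shapes to the union of the corresponding sub-$d$-uple simplicial sets in general (an edge can run through a shared vertex from one piece into the other), so this shortcut is unavailable. Everything else is a mechanical unwinding of the definitions of $\dtr_{\leq d-1}$, of the dimension of a simplex, and of the nerve via \ref{prop.simplices-nerve}.
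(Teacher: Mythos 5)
Your proof is correct and, on the one implication the paper actually writes out, namely $(1)\Rightarrow(3)$, it takes essentially the same route: reduce to a simplex all of whose components $g_a$ are non-constant, observe that every spanned box sits inside a non-degenerate $d$-box spanned by the same simplex (your ``fattening'' is the paper's ``any box of $\square[n_1,\dotsc,n_d]$ lies in the image of an injective $\square[1,\dotsc,1]$''), and conclude by closure under faces. The remaining implications, which the paper dismisses as readily verified, you carry out correctly, and your closing remark about why one cannot simply apply $[-]$ to the union in $(2)$ is a valid and worthwhile observation.
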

\begin{proof}
We only prove that (1) implies (3). The remaining implications are readily verified. Suppose that we have a $d$-dimensional simplex of $[J]$ corresponding to a map 
$$
g : \square[n_1, \dotsc, n_d] \rightarrow J,
$$
then we must show that $g$ factors through $I$. Since $f$ is injective, it induces an isomorphism $f : I \rightarrow f(I) \subset J$ 
to the image $f(I)$ of $I$. Hence, we must show that $g$ has image in $f(I)$. 
We may assume that each map $g_a : [n_a] \rightarrow \N$ is injective and $n_a \neq 0$. Now, any box $(x,y)$ of $\square[n_1, \dotsc, n_d]$ 
lies in the image of an injective map $i : \square[1, \dotsc, 1] \rightarrow \square[n_1, \dotsc, n_d]$. Thus it suffices to show that $gi$ has image in $f(I$). 
But $gi$ classifies a non-degenerate $d$-box of $J$, 
hence has image in $f(I)$ by assumption. 
\end{proof}
	
\begin{definition}
	We call a map of $d$-dimensional pasting shapes $I \rightarrow J$ \textit{$d$-shaping} if it is injective and satisfies any of the equivalent conditions above.
\end{definition}

With this terminology now in place, we can introduce the following notion:

\begin{definition}
A $d$-dimensional pasting shape $A$ is called a \textit{grid}, more specifically, an \textit{open grid}, respectively, a \textit{closed grid}, if 
there exists a $d$-shaping map
$$f : \dtr_{\leq d-1}\square[n_1, \dotsc, n_d] \rightarrow A, \text{ resp. } f : \square[n_1, \dotsc, n_d] \rightarrow A,$$
with $n_1, \dotsc, n_d \neq 0$, having the property that any $(d-1)$-box $(x,y)$ in $A$ 
satisfies $0 \leq x_a, y_a \leq f_a(n_a)$ for all indices $a$, and there exists an index $b$ such that $x_b = y_b \in \im(f_b)$. 
\end{definition}

\begin{example}\label{ex.grids} The 2-dimensional pasting shape $$
	\begin{tikzcd}[column sep = 15, row sep = 15, execute at end picture={
		\scoped[on background layer]
		\fill[pamblue, opacity = 0.5] (a.center) -- (b.center) -- (d.center) -- (c.center);
	}]
		|[alias=a]|(0,0) \arrow[r]\arrow[d] & (1,0)\arrow[dd] \arrow[r] & (2,0)\arrow[dd] \arrow[rr] & & |[alias=b]|(4,0)\arrow[dd] \\
		(0,1) \arrow[d]\\
		|[alias=c]|(0,2) \arrow[r] & (1,2) \arrow[r] &(2,2) \arrow[r] & (3,2) \arrow[r] & |[alias=d]| (4,2)
	\end{tikzcd}
	$$
	is an example of a closed grid, and the 2-dimensional pasting shape 
	\[ 
		\begin{tikzcd}[column sep = 15, row sep = 15]
			|[alias=a]|(0,0) \arrow[r]\arrow[d] & (1,0) \arrow[r] & |[alias=b]|(2,0)\arrow[d] \arrow[r] & (3,0) \arrow[d] \arrow[rr] && (5,0)\arrow[d] \\
			(0,1) \arrow[d]\arrow[rr] & & (2,1)\arrow[d] \arrow[r] & (3,1) \arrow[d] \arrow[r] & (4,1) \arrow[r] & (5,1)\arrow[d]\\
			|[alias=c]|(0,2) \arrow[r] & (1,2) \arrow[r] &|[alias=d]|(2,2) \arrow[r] & (3,2) \arrow[r] & (4,2) \arrow[r] & (5,2)
		\end{tikzcd}	
	\]
	is an open grid.
	But note that the following 2-dimensional pasting shape \textit{fails} to be a grid:
	\[
		\begin{tikzcd}[column sep = 15 pt, row sep = 15]
			(0,0) \arrow[r]\arrow[dd] & (1,0) \arrow[d] \arrow[r] & (2,0) \arrow[dd] \\
			& (1,1) & \\
			(0,2)\arrow[rr] && (2,2).
		\end{tikzcd}
	\]
\end{example}

\begin{proposition}
	Let $A$ be a grid witnessed by a map $f$. Then this map $f$ is unique: every other map that witnesses $A$ to be a grid coincides with $f$.
\end{proposition}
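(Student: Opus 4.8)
The plan is to reconstruct $f$ entirely from the combinatorics of $A$: the shape of its domain, whether $A$ is being exhibited as an open or a closed grid, and each component $f_a$.

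First, I would recall that, by the key observation in the proof of \ref{prop.simplices-nerve} --- whose argument only invokes the $(d-1)$-boxes $(p^a,x)$, and hence applies verbatim to maps out of $\dtr_{\leq d-1}\square[n_1,\dotsc,n_d]$ as well --- any grid-witnessing map $f$ is given on vertices by $x\mapsto (f_1(x_1),\dotsc,f_d(x_d))$, with $f_a(i):=f(0,\dotsc,0,i,0,\dotsc,0)_a$ (the $i$ in position $a$) a map of posets $[n_a]\to\N$. Since $f$ is injective, so is each $f_a$; hence $f_a$ is the unique order isomorphism of $[n_a]$ onto its image $\im(f_a)\subseteq\N$. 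Thus it suffices to show that $\im(f_a)$ (for each $a$) and the open/closed alternative are intrinsic to $A$: this pins down $n_a=|\im(f_a)|-1$, hence the domain, hence each $f_a$, hence $f$.

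The heart of the argument is the identity, valid for any grid-witnessing map $f$,
\[
\im(f_a)\;=\;\{\, c\in\N \mid \text{some non-degenerate $(d-1)$-box $(x,y)$ of $A$ has } x_a=y_a=c \,\},
\]
whose right-hand side visibly depends only on $A$. For the inclusion ``$\subseteq$'': given $c=f_a(i)$ with $i\in[n_a]$, consider the $(d-1)$-box $(x,y)$ of the domain with $x_a=y_a=i$ and $x_b=0$, $y_b=n_b$ for $b\neq a$; as each $n_b\neq0$ and each $f_b$ is strictly monotone, $f$ (a map of pasting shapes, so box-preserving) sends it to a non-degenerate $(d-1)$-box of $A$ that is flat in direction $a$ at the value $f_a(i)=c$. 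For ``$\supseteq$'': a non-degenerate $(d-1)$-box is flat in exactly one coordinate direction, so if $(x,y)$ is flat in direction $a$, the index ``$b$'' in the defining property of a grid --- the one with $x_b=y_b\in\im(f_b)$ --- must be $a$, whence $c=x_a=y_a\in\im(f_a)$.

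Finally, the open/closed alternative is also intrinsic: if $f$ exhibits $A$ as a closed grid, then (all $n_a\neq0$, $f$ box-preserving) the image of $\bigl((0,\dotsc,0),(n_1,\dotsc,n_d)\bigr)$ is a non-degenerate $d$-box of $A$; whereas if $f$ exhibits $A$ as an open grid, the $d$-shaping property of $f$ forces every $d$-box of $A$ to be degenerate. These are mutually exclusive, so the presence of a non-degenerate $d$-box in $A$ tells us which case we are in. Combining the two previous paragraphs, any two grid-witnessing maps $f$ and $g$ have the same domain and satisfy $f_a=g_a$ for every $a$, hence $f=g$. I expect the one point needing care to be the non-degeneracy bookkeeping behind ``$\supseteq$'' --- that a non-degenerate $(d-1)$-box has a single flat direction --- with everything else a routine check.
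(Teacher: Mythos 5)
Your proof is correct and follows essentially the same route as the paper's: both reduce to showing that each $\im(f_a)$ is determined by $A$, using exactly the same construction of a non-degenerate $(d-1)$-box that is flat only in direction $a$ together with the grid condition applied to the other witnessing map. Your packaging as an intrinsic characterization of $\im(f_a)$ (and your explicit check that the open/closed alternative is detected by the presence of a non-degenerate $d$-box, which the paper only asserts) is a mild reorganization of the same argument.
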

\begin{proof}
	Note that $A$ cannot be open and closed simultaneously. We restrict to the case that $A$ is open as the closed case is handled analogously. Let
	$$
	g : \dtr_{\leq d-1}\square[m_1,\dotsc, m_d] \rightarrow A
	$$
	be another map that witnesses $A$ to be a grid. Similarly to the analysis of \ref{prop.simplices-nerve}, we can identify $f$ and $g$ respectively 
	with tuples $(f_1, \dotsc, f_d)$, $(g_1, \dotsc, g_d)$ such that each $(d-1)$-box $(x,y)$ with $x_a, y_a \in \im(f_a)$, resp.\ $x_a,y_a \in \im(g_a)$, is in $A$.
	Since $f$ and $g$ are injective, it suffices to show that $\im(f_b) = \im(g_b)$ for each index $1 \leq b \leq d$. Let $x_b \in \im(f_b)$. Then for $a \neq b$, we may find integers $x_a < y_a$ in $\im(f_a)$ since $n_a \neq 0$.
	Setting $y_b := x_b$, we obtain a non-degenerate $(d-1)$-box $(x,y)$ in $A$. Since $g$ witnesses $A$ to be a grid, we must then have that $x_b = y_b \in \im(g_b)$. 
	Hence $\im(f_b) \subset \im(g_b)$. Interchanging the roles of $f$ and $g$, we also obtain the reverse inclusion.
\end{proof}

The above observation shows that the following definition is well-defined.

\begin{definition}
Let $A$ be a grid witnessed by a map $f$.
We call the $d$-tuple of points $$((f_1(0), f_1(n_1)), \dotsc, (f_d(0), f_d(n_d)))$$ the \textit{corners} of $A$.
We define the \textit{boundary} $$\partial A \subset A$$ to be the subshape of $A$ 
that consists of the boxes $(x,y)$ in $A$ such that $x_a = y_a = f_a(0), f_a(n_a)$ for some index $a$. 
If $n_1 = \dotsb = n_d = 1$, then $A$ is called a \textit{cell}. 
\end{definition}

\begin{example} The boundary of the closed grid exhibited in \ref{ex.grids} is given by
	\[
		 \begin{tikzcd}[column sep = 15, row sep = 15]
			|[alias=a]|(0,0) \arrow[r]\arrow[d] & (1,0) \arrow[r] & (2,0) \arrow[rr] & & |[alias=b]|(4,0)\arrow[dd] \\
			(0,1) \arrow[d]\\
			|[alias=c]|(0,2) \arrow[r] & (1,2) \arrow[r] &(2,2) \arrow[r] & (3,2) \arrow[r] & |[alias=d]| (4,2).
		\end{tikzcd}
	\]
\end{example}

\begin{example}\label{ex.trivial-grid}
	The pasting shape $\square[n_1, \dotsc, n_d]$ with $n_1, \dotsc, n_d \neq 0$ is of course a grid. Note that its vertebrae 
	are given by the cells 
	$$
	V_{(j_1, \dotsc, j_d)}, \quad 0 \leq j_a \leq n_a - 1,
	$$
	which is the largest $d$-dimensional pasting shape whose vertices $x$ satisfy $x_a = j_a, j_a + 1$ for all $a$.

	From the definition of grids, it may be readily deduced that the vertebrae of a grid $A$ witnessed by a map 
	$$f : \dtr_{\leq d-1}\square[n_1, \dotsc, n_d] \rightarrow A, \quad \text{or}, \quad f : \square[n_1, \dotsc, n_d] \rightarrow A,$$
	are given by cells whose corners are given by the corners of $f(V_j)$, for some tuple $j$ as above.
\end{example}

\subsection{Admittable and composable pasting shapes}\label{ssection.cpshapes}

Suppose that $I$ is an entire $d$-dimensional pasting shape. Then we would like to show that $I$ can be obtained by pasting its vertebrae together. Precisely, 
we would like to show that $I$ is the union of its vertebrae $$I = \bigcup_{V \subset I \text{ vertebra}} V,$$
and more importantly, that this colimit is preserved by the nerve functor $[-] : \Shape^d \rightarrow \Cat^d(\S)$. However, 
this is not true in general. 
For instance, if $d=2$, then we can consider the famous \textit{pinwheel} introduced by Dawson and Par\'e \cite{DawsonPare}. This is the 2-uple Segal set associated with the pasting shape
\[
	PW := 
	\begin{tikzcd}[execute at end picture={
		\scoped[on background layer]
		\fill[pamblue, opacity = 0.5] (a.center) -- (b.center) -- (d.center) -- (c.center);
	}]
		|[alias=a]|(0,0) \arrow[d]\arrow[rr] && (2,0) \arrow[r]\arrow[d] & |[alias=b]|(3,0)\arrow[dd] \\
		(0,1)\arrow[dd] \arrow[r] & (1,1) \arrow[r] \arrow[d]& (2,1) \arrow[d] & \\
		& (1,2)\arrow[d]\arrow[r] & (2,2) \arrow[r] & (3,2) \arrow[d]\\ 
		|[alias=c]|(0,3)\arrow[r] & (1,3)\arrow[rr] && |[alias=d]|(3,3).
	\end{tikzcd}
\]
One can show that mapping out of $[PW]$ into a 2-uple Segal space, is not equivalent to mapping out of the nerves of its vertebrae in a compatible fashion. 
Already at the level of pasting shapes, the pinwheel cannot be written as the union of its vertebrae. We will demonstrate this 
in \ref{ex.no-baby-pasting-pinwheel} and \ref{ex.no-pasting-pinwheel}.

Hence, we will need to single out a class of pasting shapes for which such a pasting result holds. The initial objective of this subsection 
is to introduce a candidate: the class of \textit{composable} pasting shapes.  First, we consider an intermediate, auxiliary class of \textit{admittable} pasting shapes.
Intuitively, these are the pasting shapes that can be built out of grids.
After defining these classes of pasting shapes, will then continue to show that the admittable and composable pasting shapes are considerably better behaved than arbitrary pasting shapes. In particular, we will show that any composable shape can be written as the union of its vertebrae. This can be considered 
as an early version of the \textit{pasting theorem} introduced in \ref{ssection.ptheorem}, and similar ideas will be used as input to prove the full strength of the pasting theorem.

\begin{definition}\label{def.admittable-shapes}
	A $d$-dimensional pasting shape $I$ is called \textit{admittable} if there 
	exists a filtration 
	$$
	I_0 \subset I_1 \dotsb \subset I_n = I,
	$$
	where $I_0$ is a grid and $I_k = I_{k-1} \cup A_k$ for a grid $A_k \subset I$ such that $I_{k-1} \cap A_k = \partial A_k$ is an open vertebra of $I_{k-1}$.
	The \textit{corners of $I$} are given by the corners of $I_0$ (it is readily verified that this is independent of the filtration). Moreover, we define 
	the \textit{boundary of $I$}  $$\partial I \subset I$$ to be the smallest subshape of $I$ that contains 
	all boxes $(x,y)$ such that  $x_a = y_a$ is equal to a coordinate of the $a$'th corner for some index $a$, or equivalently $\partial I = \partial I_0$.
\end{definition}

\begin{example}\label{ex.admittable}
	The filtration of 2-dimensional pasting shapes
	\[
		\begin{tikzcd}[column sep = 8, row sep = 8]
			|[alias=a]| (0,0)\arrow[d] \arrow[r, ""'name=f4] & (1,0) \arrow[d] \arrow[r, ""'name=f5] & |[alias=b]|(2,0)\arrow[dd] \\
			(0,1)\arrow[d]  & (1,1)\arrow[d] & \\
			 |[alias=c]|(0,2) \arrow[r,""name=t4] &(1,2)\arrow[r,""name=t6] & |[alias=d]|(2,2)
		\end{tikzcd}
		\subset
		\begin{tikzcd}[column sep = 8, row sep = 8, execute at end picture={
			\scoped[on background layer]
			\fill[pamblue, opacity = 0.5] (a.center) -- (b.center) -- (d.center) -- (c.center);
		}]
			|[alias=a]| (0,0)\arrow[d] \arrow[r, ""'name=f4] & |[alias=b]|(1,0) \arrow[d] \arrow[r, ""'name=f5] & (2,0)\arrow[dd] \\
			(0,1)\arrow[d] \arrow[r] & (1,1)\arrow[d] & \\
			 |[alias=c]|(0,2) \arrow[r,""name=t4] &|[alias=d]|(1,2)\arrow[r,""name=t6] & (2,2)
		\end{tikzcd}
		\subset
		\begin{tikzcd}[column sep = 8, row sep = 8, execute at end picture={
			\scoped[on background layer]
			\fill[pamblue, opacity = 0.5] (a.center) -- (b.center) -- (d.center) -- (c.center);
		}]
			|[alias=a]| (0,0)\arrow[d] \arrow[r, ""'name=f4] & (1,0) \arrow[d] \arrow[r, ""'name=f5] & |[alias=b]|(2,0)\arrow[dd] \\
			(0,1)\arrow[d] \arrow[r] & (1,1)\arrow[d] & \\
			 |[alias=c]|(0,2) \arrow[r,""name=t4] &(1,2)\arrow[r,""name=t6] & |[alias=d]|(2,2)
		\end{tikzcd}
\]
witnesses the rightmost pasting shape to be admittable.
\end{example}

\begin{definition}\label{def.composable-shapes}
A $d$-dimensional pasting shape $I$ is called \textit{composable} if it meets the following conditions: 
\begin{enumerate}[noitemsep]
	\item $I$ is closed admittable (i.e.\ admittable and closed $d$-entire),
	\item any closed $k$-entire subshape of $I$, with $k < d$, is admittable when viewed as a $k$-dimensional pasting shape (see \ref{rem.hyperplane-embedding}).
\end{enumerate}
\end{definition}

\begin{example} Here are some examples:
\begin{itemize}[noitemsep ]
	\item The 1-dimensional admittable pasting shapes correspond to those subposets of $\N$ which are disjoint unions of intervals. The composable ones correspond to the intervals.
	\item An example of a 2-dimensional composable pasting shape is the pasting shape 
\[
\begin{tikzcd}[execute at end picture={
	\scoped[on background layer]
	\fill[pamblue, opacity = 0.5] (a.center) -- (b.center) -- (d.center) -- (c.center);
}]
	|[alias=a]|(0,0) \arrow[rr]\arrow[d] && (2,0)\arrow[d] \arrow[r] & |[alias=b]|(3,0)\arrow[d] \\
	(0,1) \arrow[r]\arrow[d] & (1,1) \arrow[d]\arrow[r] & (2,1) \arrow[r] & (3,1)\arrow[d] \\
	|[alias=c]|(0,2) \arrow[r] & (1,2) \arrow[rr] && |[alias=d]|(3,2),
\end{tikzcd}
\]
which classifies one of the relevant composites of the triangle identity in a \textit{2-fold Segal space} (see \ref{section.outlook}).
\item A filtration as in \ref{def.admittable-shapes} for the pinwheel $PW$ cannot exist 
since the only subgrids of the pinwheel are six distinct cells. Hence, the pinwheel is not admittable and, in particular, 
not composable.
\item The 2-dimensional pasting shape $I$ and the 3-dimensional pasting shape $J$ of \ref{ex.pshapes-graph} are composable.
\item All pasting shapes in the filtration of \ref{ex.admittable} are admittable, but only the rightmost shape is closed and composable.
\end{itemize}
\end{example}

We would now like to study these admittable and composable pasting shapes. In particular, we will show at the end of this subsection the following theorem which may be considered 
as a stepping stone to \ref{thm.pasting-theorem}, the general pasting theorem:

\begin{theorem}\label{thm.baby-pasting-theorem}
	Let $I$ be a composable pasting shape. Then each vertebra of $I$ is composable, and $I$ can be written as the union of its vertebrae.
\end{theorem}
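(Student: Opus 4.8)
The plan is to argue by induction on the length $n$ of a filtration
$$
I_0 \subset I_1 \subset \dotsb \subset I_n = I
$$
witnessing $I$ to be admittable as in \ref{def.admittable-shapes}: here $I_0$ is a grid, and $I_k = I_{k-1}\cup A_k$ for a grid $A_k$ with $I_{k-1}\cap A_k = \partial A_k$ an open vertebra of $I_{k-1}$. The combinatorial backbone of the argument would be the following transformation rule for vertebrae, which I would establish first:
\[
\{\text{vertebrae of } I_k\} \;=\; \bigl(\{\text{vertebrae of } I_{k-1}\}\setminus\{\partial A_k\}\bigr)\,\cup\,\{\text{vertebrae of } A_k\}.
\]
Granting this, both assertions of the theorem drop out with little extra work.

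For the statement that each vertebra is composable: iterating the rule shows that every vertebra of $I=I_n$ is a vertebra of one of the grids $I_0, A_1, \dotsc, A_n$, hence is a cell by \ref{ex.trivial-grid}. Since $I$ is moreover closed $d$-entire, one checks by tracking the filtration that every vertebra surviving to the final stage is in fact a \emph{closed} cell (the open cells that occur as vertebrae of intermediate stages are exactly the pieces that are filled in later). A closed cell $C$ is manifestly composable in the sense of \ref{def.composable-shapes}: it is a closed grid, hence closed admittable, and every closed $k$-entire subshape of $C$ with $k<d$ is again a closed cell — it lies in a $k$-dimensional hyperplane through a face of $C$ and is a grid there — and in particular admittable.

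For the statement that $I$ is the union of its vertebrae I would show, again by induction on $k$, that $I_k$ is the union of its vertebrae. The case $k=0$ is the assertion that a grid equals the union of its vertebra-cells: by \ref{ex.trivial-grid} these cells are subshapes of the grid, their union is closed under faces and joins and already contains every box that spans at most one grid interval in each coordinate direction, so closure under joins forces equality. For the inductive step, the transformation rule identifies the vertebrae of $I_k$ as the vertebrae of $I_{k-1}$ other than $\partial A_k$, together with the vertebrae of $A_k$; hence the union of the vertebrae of $I_k$ equals
\[
\Bigl(\bigcup_{V \neq \partial A_k}V\Bigr)\cup A_k,
\]
where $V$ ranges over the vertebrae of $I_{k-1}$ and we used the case $k=0$ for the grid $A_k$. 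Since $\partial A_k \subseteq A_k$, this coincides with $\bigl(\bigcup_{V}V\bigr)\cup A_k$ with $V$ now ranging over \emph{all} vertebrae of $I_{k-1}$, which equals $I_{k-1}\cup A_k = I_k$ by the inductive hypothesis. Taking $k=n$ would finish the proof.

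The main obstacle is the transformation rule for vertebrae, which rests on several geometric facts about admittable shapes that would need to be proven: that two distinct vertebrae of an admittable shape share only boxes of dimension $<d$; that a box of $A_k$ not contained in $\partial A_k$ already pins down which vertebra-cell of $A_k$ contains it, so that any vertebra of $I_k$ meeting the interior of $A_k$ is itself a vertebra of $A_k$; and that $\partial A_k$ — being a $d$-entire subshape of $I_{k-1}$ with the same corners as $A_k$ — absorbs every box of $I_{k-1}$ lying within those corners, so that a vertebra of $I_{k-1}$ different from $\partial A_k$ remains $d$-entire, hence a vertebra, in $I_k$. Along the way I would also invoke the uniqueness of the witnessing map of a grid (already available in the excerpt) and note that the corners and boundary of $I$ are independent of the chosen filtration, which the transformation rule supplies as a byproduct.
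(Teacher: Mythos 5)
Your proposal is a plausible outline, but two of its load-bearing steps do not hold as stated. The first concerns the composability of the vertebrae. The claim that ``a closed cell $C$ is manifestly composable'' because ``every closed $k$-entire subshape of $C$ with $k<d$ is again a closed cell \dots and is a grid there'' is false: the definition of a grid only constrains the $(d-1)$-boxes of $C$ to lie on gridlines and says nothing about the admittability of the lower-dimensional structure carried by a face. For a concrete counterexample, take the closed $2$-cell $C$ with vertices $(0,0),(1,0),(3,0),(4,0),(0,1),(4,1)$, whose non-degenerate boxes are the $2$-box $((0,0),(4,1))$, its four faces, and the two extra edges $((0,0),(1,0))$ and $((3,0),(4,0))$. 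This is a closed cell, but the closed $1$-entire subshape bounded by $((0,0),(4,0))$ is the $1$-dimensional shape on $\{0,1,3,4\}$ with non-degenerate boxes $(0,1)$, $(3,4)$, $(0,4)$ only, which is not a disjoint union of intervals and hence not admittable; so $C$ violates condition (2) of \ref{def.composable-shapes}. Composability of a vertebra $V$ cannot be read off from $V$ alone; it must be inherited from $I$: $V$ is a closed entire subshape of $I$, hence admittable by \ref{prop.closed-entire-admittable}, and every closed $k$-entire subshape of $V$ is one of $I$, hence admittable by condition (2) for $I$. (The paper gets this by applying its induction hypothesis to the pieces $J_i$ of a decomposition, which are composable of lower height.)

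The second, larger gap is that the transformation rule for vertebrae, which carries essentially all of the combinatorial content of the theorem, is left unproven, and the auxiliary facts you list are not quite the right ones. To see that a vertebra $V$ of $I_{k-1}$ with $V\neq\partial A_k$ remains entire in $I_k$, the danger is not that $V$ and $\partial A_k$ share a $d$-box (they cannot, since $\partial A_k$ is open), but that some \emph{new} box of $I_k$ --- a join of a box of $I_{k-1}$ with a box of $A_k$ --- lands inside the bounding box of $V$. By part (2) of \ref{prop.div-pair-boxes} the new boxes are exactly those $(x,y)$ with $x_a<\omega_a$ and $y_a>\alpha_a$ for all $a$, where $(\alpha,\omega)$ runs over the corners of $A_k$; so what you actually need is that the bounding box of $V$ cannot overlap the corners of $A_k$ in all $d$ directions simultaneously unless $V=\partial A_k$, a statement about the relative position of bounding boxes of vertebrae that is precisely what the decomposition machinery of \ref{prop.div-pair-boxes} and \ref{lemma.decomp-fact-prop} is built to deliver (and which the paper proves by its own nontrivial inductions). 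Similarly, showing that $I_k$ has no vertebrae beyond those on your list requires controlling which non-degenerate $d$-boxes of $I_k$ can bound an entire subshape, which is again this machinery. The paper's proof inducts on the \emph{height} of $I$ via a decomposition $(A,\{J_i\})$, identifies the vertebrae of $I$ with those of the $J_i$, and then needs the stronger factorization property \ref{lemma.decomp-fact-prop-comp} (itself an induction on dimension as well as height) to conclude that every box lies in some vertebra; your filtration-length induction with the grid base case would be a genuine simplification of that last step, but only once the transformation rule is actually established. As it stands, the steps you defer are the theorem.
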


In order to prove structural properties of admittable pasting shapes (it is also used in the proof of the general pasting theorem), the following notion of \textit{division pairs} turns out to be useful. It is precisely the datum 
of a stage in a filtration that appears in \ref{def.admittable-shapes}:

\begin{definition}
	A \textit{division pair} for an admittable pasting shape $I$ is a tuple $(K,J)$ of admittable subshapes of $I$ such that $K \cap J = \partial J$ is an open vertebra of $K$, and $I = K \cup J$.
\end{definition}

\begin{example}
	An example of a division pair is the pair
	\[	
		\left( \begin{tikzcd}[column sep = 8, row sep = 8,execute at end picture={
			\scoped[on background layer]
			\fill[pamblue, opacity = 0.5] (a.center) -- (b.center) -- (d.center) -- (c.center);
		}]
			 (0,0)\arrow[d] \arrow[r, ""'name=f4] &|[alias=a]| (1,0) \arrow[d] \arrow[r, ""'name=f5] & |[alias=b]|(2,0)\arrow[dd] \\
			(0,1)\arrow[d]  & (1,1)\arrow[d] & \\
			 (0,2) \arrow[r,""name=t4] &|[alias=c]|(1,2)\arrow[r,""name=t6] & |[alias=d]|(2,2),
		\end{tikzcd}\;\;
		\begin{tikzcd}[column sep = 8, row sep = 8, execute at end picture={
			\scoped[on background layer]
			\fill[pamblue, opacity = 0.5] (a.center) -- (b.center) -- (d.center) -- (c.center);
		}]
			|[alias=a]| (0,0)\arrow[d] \arrow[r, ""'name=f4] & |[alias=b]|(1,0) \\
			(0,1)\arrow[d] \arrow[r] & (1,1)\arrow[d]  \\
			 |[alias=c]|(0,2) \arrow[r,""name=t4] &|[alias=d]|(1,2)
		\end{tikzcd}
		\right)
		\quad \text{for}\quad
		\begin{tikzcd}[column sep = 8, row sep = 8, execute at end picture={
			\scoped[on background layer]
			\fill[pamblue, opacity = 0.5] (a.center) -- (b.center) -- (d.center) -- (c.center);
		}]
			|[alias=a]| (0,0)\arrow[d] \arrow[r, ""'name=f4] & (1,0) \arrow[d] \arrow[r, ""'name=f5] & |[alias=b]|(2,0)\arrow[dd] \\
			(0,1)\arrow[d] \arrow[r] & (1,1)\arrow[d] & \\
			 |[alias=c]|(0,2) \arrow[r,""name=t4] &(1,2)\arrow[r,""name=t6] &|[alias=d]| (2,2).
		\end{tikzcd}
\]
\end{example}

\begin{proposition}\label{prop.div-pair-boxes}
	Let $I$ be an admittable pasting shape. Then the following is true: 
	\begin{enumerate} 
		\item For any open vertebra $V$ of $I$ with corners $((\alpha_1, \omega_1), \dotsc, (\alpha_d, \omega_d))$, every box $(x,y)$ in $I$ 
		has $y_a \leq \alpha_a$ or $x_a \geq \omega_a$ for some index $a$.
		\item If $(K,J)$ is a division pair for an admittable pasting shape $I$, where $J$ has corners given by $((\alpha_1, \omega_1), \dotsc, (\alpha_d,\omega_d))$, 
		then $J$ is entire, and for any box $(x,y)$ in $I$ we have that 
	$(x,y)$ is in $K$ if and only if there exists an index $a$ such that $y_a \leq \alpha_a$ or $x_a \geq \omega_a$. 
	\end{enumerate}
\end{proposition}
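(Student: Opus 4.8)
The plan is to establish part (1) by induction on the length $n$ of an admittability filtration $I_0 \subset I_1 \subset \dotsb \subset I_n = I$ provided by \ref{def.admittable-shapes}, and then to deduce part (2) from part (1) by a purely formal argument.

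For the base case of (1), $I = I_0$ is a grid, witnessed by a map $f$. By \ref{ex.trivial-grid} the vertebrae of a grid are cells whose corners are those of $f(V_j)$, so a closed grid has no open vertebrae at all, and I may assume $I$ is an open grid; thus $\alpha_a = f_a(j_a)$ and $\omega_a = f_a(j_a + 1)$ for the given open vertebra $V$ and some tuple $j$. If a box $(x,y)$ of $I$ violated the conclusion, that is, satisfied $\alpha_a < y_a$ and $x_a < \omega_a$ for every index $a$, then — after reducing by closure under faces to a box constrained by the grid axioms — one finds a degenerate coordinate $b$ of $(x,y)$ whose common value is some $f_b(l) \in \im(f_b)$, and straddling in direction $b$ gives $f_b(j_b) < f_b(l) < f_b(j_b+1)$, contradicting the monotonicity and injectivity of $f_b$. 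This settles $n = 0$.

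For the inductive step write $I = I_{n-1} \cup A_n$ with $P := \partial A_n = I_{n-1} \cap A_n$ an open vertebra of $I_{n-1}$, and let $V$ be an open vertebra of $I$ with corners $(\alpha_a, \omega_a)_a$. I would first verify that $V$ is either an open vertebra of $I_{n-1}$ distinct from $P$, or an open vertebra of the grid $A_n$, and that in either case the corners are unchanged; this rests on the observation that every box of $I$ not already lying in $I_{n-1}$ is a box of $A_n$ and hence lies in the region of $P$. In the first case the inductive hypothesis takes care of the boxes of $I_{n-1}$, and the boxes of $A_n$ are handled because the inductive hypothesis applied to the open vertebra $P$ of $I_{n-1}$ forces the entire region of $A_n$ to lie, in some coordinate direction, on one side of $V$. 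In the second case the base case applied to $A_n$ takes care of the boxes of $A_n$, and the boxes of $I_{n-1}$ are handled by combining the inductive hypothesis for $P$ with the fact that the region of $V$ is contained in that of $P$, propagating one-sidedness through closure under joins and faces. I expect this last step to be the main obstacle: keeping precise track of the corners of all the vertebrae in play and verifying that the one-sided condition is inherited at each gluing stage, in particular that filling the slot $P$ neither destroys another open vertebra nor allows any box to reach further relative to $V$ than it could inside $I_{n-1}$ or $A_n$ separately.

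Granting (1), part (2) is formal. Given a division pair $(K,J)$ for $I$ with $J$ having corners $(\alpha_a, \omega_a)_a$, the boundary $\partial J = K \cap J$ is an open vertebra of $K$ with the same corners; since it is $d$-entire in $K$, the boxes of $K$ lying in the region $\alpha_a \leq x_a, y_a \leq \omega_a$ are exactly those of $\partial J \subset J$, so every box of $I = K \cup J$ in that region lies in $J$, which is what it means for $J$ to be entire. For the stated characterisation: if $(x,y)$ lies in $K$ then applying (1) to the open vertebra $\partial J$ of the admittable shape $K$ produces an index $a$ with $y_a \leq \alpha_a$ or $x_a \geq \omega_a$; conversely, if $(x,y)$ is a box of $I = K \cup J$ admitting such an index $a$ and $(x,y) \notin K$, then $(x,y) \in J$, so $\alpha_b \leq x_b, y_b \leq \omega_b$ for all $b$, whence $y_a \leq \alpha_a$ forces $x_a = y_a = \alpha_a$ and $x_a \geq \omega_a$ forces $x_a = y_a = \omega_a$, so $(x,y)$ is a box of $\partial J = K \cap J \subset K$ — a contradiction.
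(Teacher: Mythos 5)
Your skeleton---induction along the filtration of \ref{def.admittable-shapes} with grids as the base case, which is also how the paper organizes the argument (phrased through division pairs)---is right, and your base case for open grids is correct. The gap is that you repeatedly treat the box set of a union of pasting shapes as the union of the box sets. Unions of pasting shapes are by definition closed under joins, and joining a box of $I_{n-1}$ to an adjacent box of $A_n$ generally produces a box lying in neither: in \ref{ex.admittable}, the adjacent $2$-boxes $((0,0),(1,2))$ and $((1,0),(2,2))$ force their join $((0,0),(2,2))$ into the union although it belongs to neither constituent. This falsifies your claim that ``every box of $I$ not already lying in $I_{n-1}$ is a box of $A_n$'' (on which your classification of the open vertebrae of $I$ rests), it leaves the boxes of $I$ lying in neither $I_{n-1}$ nor $A_n$ unaccounted for in the inductive step of (1)---you flag this as ``the main obstacle'' but do not resolve it---and it breaks the ``purely formal'' derivation of (2) twice: both ``every box of $I = K\cup J$ in that region lies in $J$'' and ``if $(x,y)\notin K$, then $(x,y)\in J$'' silently assume $B^d(K\cup J)=B^d(K)\cup B^d(J)$. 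A further unproved step is the separation statement in your first case, that two distinct open vertebrae $P$ and $V$ of $I_{n-1}$ lie on one side of each other in some coordinate direction; this is not an instance of the inductive hypothesis, since the bounding box of the open vertebra $P$ is not itself a box of $I_{n-1}$.

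The paper resolves exactly this point by proving (1) and (2) simultaneously: assuming (1) for both halves $K$ and $J$ of a division pair, it gives an explicit inductive description of $B^d(K\cup J)$ (the sets $N^k$ with conditions (i) and (ii)), showing that the only boxes beyond those of $K$ and $J$ are the ones that overlap the region of $\partial J$ in every coordinate direction and protrude from it in at least one; the entireness of $J$, the characterization in (2), and statement (1) for $K\cup J$ are then all read off from this description. Two remarks on salvaging your plan: the one-sided condition in (1) is in fact closed under faces and under joins (a short case analysis using that adjacent boxes have identical coordinates in all but one direction), so your propagation idea for (1) can be made to work once the condition is verified on the generating boxes and the open vertebrae of the union are correctly identified; but (2) is genuinely not a formal consequence of (1) alone---its ``only if'' direction requires knowing that no join-created box satisfies the one-sided condition relative to $\partial J$, which is precisely the content of the paper's box description.
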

\begin{proof}
	Suppose that $(K,J)$ is a division pair for an admittable pasting shape $I$ such that (1) holds for $K$ and $J$.
	Denote the corners of $J$ by $((\alpha_1, \omega_1), \dotsc, (\alpha_d, \omega_d))$.
	We inductively define a subset $N^k$ of non-degenerate $k$-boxes of $I$ as follows. We set $N^0 := B^0(K) \cup B^0(J)$. If $k \geq 1$ and $N^{k-1}$ 
	is defined, then we define $N^k$ to be the set of non-degenerate $k$-boxes $(x,y)$ in $I$ such that the faces 
	of $(x,y)$ are in $N^{k-1}$ and one of the following properties is met:
	\begin{enumerate}[label=(\roman*)]
		\item $(x,y) \in B^k(K) \cup B^k(J)$,
		\item for all indices $a$, we have $x_a < \omega_a$ and $y_a > \alpha_a$, and there exists an index $b$ so that $x_b < \alpha_b$ or 
		$y_b > \omega_b$.
	\end{enumerate}
	Here, condition (ii) will capture boxes that are obtained by joining boxes in $K$ with boxes in $J$ which do not lie in $K \cap J = \partial J$. Note that a non-degenerate $k$-box 
	cannot have properties (i) and (ii) simultaneously.

	One now readily checks that the boxes in the subset
	$$
	\bigcup_{k=0}^d N^k \subset B^d(I)
	$$
	are closed under faces and joins (by induction on $k$), and hence, constitute a subshape $I' \subset I$.
	By construction, $K$ and $J$ are contained in $I'$. Since $I = K \cup J$, we must have that $I=I'$. Thus the non-degenerate $k$-boxes 
	of $I$ are given by $N^k$. From this description and the fact that (1) holds for $K$, it can be deduced 
	that (2) holds. Moreover, using that (1) also holds for $J$, we can now deduce that (1) holds for $I$ as well. 
	Hence, we may reduce the proof of the statement to showing that (1) holds any grid, which is clear.
\end{proof}

In the demonstration of \ref{prop.div-pair-boxes} above, we have used division pairs to induct our way up to each admittable pasting shape. Another convenient 
way of giving similarly flavored inductive proofs is using the notion of \textit{heights}:

\begin{definition}
A \textit{decomposition} of an admittable pasting shape $I$ is a pair $(A, \{J_i\})$, where $A \subset I$ is an open subgrid of $I$, $\{J_i\}$ is a collection 
of admittable subshapes of $I$, such the intersections $A \cap J_i = \partial J_i$ are all  mutually distinct open vertebrae of $A$, and
$I$ can be written as 
$$
I = A \cup \bigcup J_i.
$$

We say that an admittable pasting shape $I$ has  \textit{height} 0 if it is a cell. Inductively, we say that a pasting shape has \textit{height $h$} if 
there exists a decomposition $(A, \{J_i\})$ of $I$ where each $J_i$ height $h-1$.
\end{definition}

\begin{example}\label{ex.decomp} The admittable pasting shape
\[
I := \begin{tikzcd}[column sep = 8, row sep = 8, execute at end picture={
	\scoped[on background layer]
	\fill[pamblue, opacity = 0.5] (a.center) -- (b.center) -- (d.center) -- (c.center);
}]
	|[alias=a]| (0,0)\arrow[d] \arrow[r] & (1,0) \arrow[d] \arrow[r] & (2,0)\arrow[d]\arrow[r] & (3,0) \arrow[r] &|[alias=b]| (4,0)\arrow[d]\\
	(0,1) \arrow[d] & (1,1)\arrow[r]\arrow[d] & (2,1)\arrow[d]\arrow[r] & (3,1)\arrow[d]\arrow[r] & (4,1)\arrow[d] \\
	 (0,2) \arrow[r]\arrow[d] &(1,2)\arrow[d]\arrow[r] & (2,2)\arrow[r]\arrow[d] & (3,2)\arrow[r]\arrow[d] & (4,2)\arrow[dd] \\
	 (0,3)\arrow[r]\arrow[d]  &(1,3)\arrow[r] & (2,3)\arrow[d]\arrow[r] & (3,3)\arrow[d] &  \\
	 |[alias=c]|(0,4) \arrow[rr] & & (2,4)\arrow[r] & (3,4)\arrow[r] &|[alias=d]| (4,4)
\end{tikzcd}
\]
has a decomposition with a corresponding open subgrid 
\[
A := \begin{tikzcd}[column sep = 8, row sep = 8]
	|[alias=a]| (0,0)\arrow[d] \arrow[r] & (1,0) \arrow[r] & (2,0)\arrow[d]\arrow[r] & (3,0) \arrow[r] &|[alias=b]| (4,0)\arrow[d]\\
	 (0,1)\arrow[d] &  & (2,1)\arrow[d] & & (4,1)\arrow[d] \\
	 (0,2) \arrow[r]\arrow[d] &(1,2)\arrow[r] & (2,2)\arrow[r]\arrow[d] & (3,2)\arrow[r] & (4,2)\arrow[dd] \\
	 (0,3)\arrow[d]  & & (2,3)\arrow[d] & &  \\
	 |[alias=c]|(0,4) \arrow[rr] & & (2,4)\arrow[r] & (3,4)\arrow[r] &|[alias=d]| (4,4).
\end{tikzcd}
\]
\end{example}

The proof of the following proposition shows how one can obtain a decomposition for an admittable pasting shape:

\begin{proposition}
Any admittable pasting shape admits a decomposition and has a height.
\end{proposition}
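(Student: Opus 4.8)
The plan is to argue by strong induction on the number of boxes of $I$, establishing simultaneously the two assertions of the proposition together with the auxiliary statement that the set of heights of an admittable pasting shape is upward closed, i.e.\ that if $I$ has height $h$ then it has height $h'$ for every $h' \geq h$. The base case is the class of grids, and in the inductive step I peel off the last grid of an admissibility filtration.

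\emph{Grids.} If $I$ is an open grid, then $\dtr_{\leq d-1}I = I$ and $(I, \emptyset)$ is a decomposition, while if $I$ is a closed grid, let $\{V_j\}$ be its vertebrae, which are cells by \ref{ex.trivial-grid}; then $(\dtr_{\leq d-1}I, \{V_j\})$ is a decomposition. Indeed $\dtr_{\leq d-1}I$ is an open subgrid of $I$, each $V_j$ is admittable (being a grid), $\dtr_{\leq d-1}I \cap V_j = \partial V_j$ is an open vertebra of $\dtr_{\leq d-1}I$, distinct cells have distinct corners, and $I = \dtr_{\leq d-1}I \cup \bigcup_j V_j$ because every non-degenerate $d$-box of $I$ lies in some $V_j$ by closure under joins and every box of smaller dimension lies in $\dtr_{\leq d-1}I$. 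Since cells have height $0$, any grid has height $1$ (and height $0$ if it is itself a cell); that its heights are upward closed follows from the final paragraph.

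\emph{The inductive step.} Suppose $I$ is admittable but not a grid, and fix an admissibility filtration $I_0 \subset \dotsb \subset I_n = I$. Deleting any step with $A_k \subseteq I_{k-1}$, which leaves the shape unchanged, we may assume every step is strict; then $n \geq 1$, and $I_{n-1}$ and $A_n$ are admittable subshapes of $I$ with strictly fewer boxes than $I$, with $I = I_{n-1} \cup A_n$ and $I_{n-1} \cap A_n = \partial A_n$ an open vertebra of $I_{n-1}$. By the inductive hypothesis $I_{n-1}$ admits a decomposition $(A, \{J_i\})$. It remains to incorporate the grid $A_n$. Using the explicit description of the boxes of an admittable pasting shape in terms of the corners of the members of a division pair provided by \ref{prop.div-pair-boxes}, one locates the open vertebra $\partial A_n$ of $I_{n-1}$ precisely: it is an open vertebra of the open subgrid $A$, or of exactly one of the pieces $J_{i_0}$. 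In the first case, if moreover $\partial A_n$ differs from every $\partial J_i$, then $(A, \{J_i\} \cup \{A_n\})$ is a decomposition of $I$. In the remaining case $J_{i_0} \cap A_n = \partial A_n$, so appending $A_n$ to an admissibility filtration of $J_{i_0}$ exhibits $J_{i_0} \cup A_n$ as admittable; as $A_n$ has the same corners as $\partial A_n$, the shape $J_{i_0} \cup A_n$ has the same boundary as $J_{i_0}$, and $\bigl(A,\,(\{J_i\} \setminus \{J_{i_0}\}) \cup \{J_{i_0} \cup A_n\}\bigr)$ is a decomposition of $I$. One checks that the new pieces remain admittable subshapes of $I$ with strictly fewer boxes than $I$ (using that a decomposition of a non-cell shape has all its pieces strictly smaller), so the induction is well-founded.

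\emph{Heights.} Given any admittable $I$ equipped with a decomposition $(A, \{J_i\})$, the inductive hypothesis supplies each $J_i$ with a height and with an upward closed set of heights; letting $H$ be the maximum of these heights (or $H = 0$ if there are no pieces), every $J_i$ has height $H$, whence $I$ has height $H+1$. Running the same argument with any $h \geq H$ in place of $H$ shows that $I$ has height $h$ for every $h \geq H+1$, so the heights of $I$ form an upward closed set, completing the induction. The delicate point throughout --- indeed the only step that is not routine bookkeeping --- is the precise location of the attaching vertebra $\partial A_n$ relative to the decomposition $(A, \{J_i\})$ of $I_{n-1}$: one must rule out that $\partial A_n$ straddles $A$ together with several of the $J_i$, which is exactly where \ref{prop.div-pair-boxes} and the box-level description of admittable shapes are needed.
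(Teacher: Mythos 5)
Your argument runs along essentially the same lines as the paper's: both proofs walk up an admissibility filtration, use \ref{prop.div-pair-boxes} to locate the attaching vertebra $\partial A_k$ relative to a decomposition already constructed, and then update that decomposition by either adjoining the new grid as a fresh piece or absorbing it into the piece containing $\partial A_k$. The organizational difference is that the paper fixes the open grid once and for all as $\dtr_{\leq d-1}I_0$ and takes as pieces the entire subshapes of $I_k$ whose bounding boxes are the corners of the vertebrae of $I_0$; since there is then exactly one piece per grid cell of $I_0$ and these tile $I_k$, the localization step collapses to the single assertion that $\partial A_k$ is contained in some piece, whereas your use of an arbitrary decomposition of $I_{n-1}$ forces the extra case split you describe (and leaves the ``delicate point'' at about the same level of detail as the paper does). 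Two points need repair. First, the parenthetical ``a decomposition of a non-cell shape has all its pieces strictly smaller'' is false: whenever $\partial I$ is a single open cell, $(\partial I, \{I\})$ is a valid decomposition of $I$ in the sense of the definition (already for the full interval $\square[2]$ in dimension one). Your height argument needs the inductive hypothesis to apply to the pieces, so you should instead strengthen the induction to record that the decomposition you \emph{construct} has every piece properly contained in $I$ whenever $I$ is not a cell --- which it does, since a new piece $A_n = I$ or a merged piece $J_{i_0}\cup A_n = I$ would force $I$ to be a grid or the decomposition of $I_{n-1}$ to be degenerate, and neither occurs along your construction. Second, in the base case, ``every non-degenerate $d$-box of $I$ lies in some $V_j$ by closure under joins'' should say that it lies in the union $\bigcup_j V_j$: a box spanning several cells lies in no single $V_j$, but is an iterated join of boxes that do.
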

\begin{proof}
Suppose that $I$ is an admittable pasting shape with a filtration as in \ref{def.admittable-shapes}. 
Then for $1 \leq k \leq n$, we consider the pair
$$
D_k = (\dtr_{\leq d-1}I_0, S_k),
$$
where $S_k$ is the set of entire subshapes of $I_k$ whose bounding boxes are given by the corners of vertebrae of $I_0$. We 
show by induction that $D_k$ is a decomposition for $I_k$. For $k = 0$, this is clear.  
Suppose that $D_{k-1}$ is a decomposition for $I_{k-1}$. Now, $I_k = I_{k-1} \cup A_{k}$, where $A_k$ is a grid and $A_k \cap I_{k-1} = \partial A_k$ is a vertebra of $I_{k-1}$. \ref{prop.div-pair-boxes} implies 
that $\partial A_k \subset E$ of some subshape $E \in S_{k-1}$. Let $E'$ be the entire subshape in $S_k$ with the same bounding box. 
Then $E' = (I_{k-1} \cup A_k) \cap E' = E \cup A_k$ and $E \cap A_k = \partial A_k$ must be a vertebra of $E$. Thus $E'$ is again admittable. Moreover, we see that $S_k = S_{k-1} \setminus \{E\} \cup \{E'\}$. 
All in all, this implies that $D_k$ is a decomposition for $I_k$. It is readily verified that $I$ has a height.
\end{proof}

\begin{remark}
	Note that the height of an admittable pasting shape $I$ is not unique. If $I$ has height $h$ then $I$ has all heights greater than $h$ as well.
\end{remark}

\begin{example}
	The 2-dimensional admittable pasting shape $I$ of \ref{ex.pshapes-graph} has (minimal) height $3$.
\end{example}

The following \textit{factorization property} is an important property of admittable pasting shapes:

\begin{lemma}\label{lemma.decomp-fact-prop}
	Suppose that $I$ is an admittable pasting shape with decomposition $(A, \{J_i\})$, where the open grid $A$ is witnessed by the map
	$$
	f : \dtr_{\leq d-1}\square[n_1, \dotsc, n_d] \rightarrow A.
	$$
	Then for any box $(x,y) \in I$, we have $(x,y) \in A$ if and only if $x_b = y_b \in \im(f_b)$ for some index $b$. If $(x,y) \notin A$, then there exists an injective map
	$$
	g : \square[m_1, \dotsc, m_d] \rightarrow I,
	$$
	such that $$\im(g_a) = \{t_a \in \im(f_a) \mid x_a \leq t_a \leq y_a\} \cup \{x_a, y_a\}.$$
	In particular, it follows that any injective map 
	$$
	h : \square[p_1, \dotsc, p_d] \rightarrow I
	$$
	with $p_1, \dotsc, p_d \neq 0$, admits an extension 
	$$
	h' : \square[p_1', \dotsc, p_d'] \rightarrow I
	$$
	such that $\im(h'_a) = \{t_a \in \im(f_a) \mid h_a(0) \leq t_a \leq h_a(p_a)\} \cup \im(h_a)$.
\end{lemma}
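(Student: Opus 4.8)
The plan is to establish the three assertions in order. Throughout I process the decomposition one piece at a time: write $P_0=A$ and $P_i=P_{i-1}\cup J_i$, and observe that each $(P_{i-1},J_i)$ is a division pair for $P_i$. The one point requiring care here is that two distinct pieces $J_m,J_i$ meet only inside $A$: their boxes are bounded by the corners of two \emph{distinct} unit cells of the grid $A$ (so by \ref{ex.trivial-grid} the corners of $J_i$ are consecutive grid values $\alpha^{(i)}_a=f_a(j_a)$, $\omega^{(i)}_a=f_a(j_a+1)$), hence any common box is degenerate at a grid value in some direction and lies in $\partial J_m\cap\partial J_i\subset A$. With this in hand, \ref{prop.div-pair-boxes} and the inductive box-description in its proof apply at each stage, telling us that every box of $P_i$ lies in $A$, lies in some $J_l$, or arises as a join across the cell of some $J_l$, and that the boxes of each $J_l$ are bounded by its corners.

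For the membership criterion, recall that $f$ has domain $\dtr_{\leq d-1}\square[n_1,\dotsc,n_d]$, which has no non-degenerate $d$-boxes, so the grid $A$ is $(d-1)$-truncated; one readily checks that every box of $A$ is then a face of a $(d-1)$-box of $A$, whose unique degenerate coordinate lies in the relevant $\im(f_b)$ by the grid axioms, and this property is inherited by faces --- giving ``only if''. Conversely, let $(x,y)\in I$ with $x_b=y_b\in\im(f_b)$ and $(x,y)\notin A$. By the box-description above, $(x,y)$ lies in some $J_l$ --- whence $x_b=y_b$ is a grid value lying between the consecutive grid values $\alpha^{(l)}_b,\omega^{(l)}_b$, so it equals one of them and $(x,y)\in\partial J_l\subset A$, a contradiction --- or it arises as a join across the cell of some $J_l$, which forces $x_a<\omega^{(l)}_a$ and $y_a>\alpha^{(l)}_a$ for \emph{every} $a$, incompatible with $x_b=y_b$ being a grid value. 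This proves ``if''.

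For the construction of $g$, take $(x,y)\in I\setminus A$. If $(x,y)$ lies in some $J_l$, its coordinates all lie between $\alpha^{(l)}_a$ and $\omega^{(l)}_a$, so $\{t\in\im(f_a)\mid x_a\le t\le y_a\}\cup\{x_a,y_a\}=\{x_a,y_a\}$ and the required $g$ is just the inclusion of the box $(x,y)$. Otherwise $(x,y)$ arises as a join across the cell of some $J_l$; cutting it along the grid hyperplanes of $A$ that meet its interior expresses it as an iterated join of full-dimensional sub-boxes, of which the central one is bounded by the cell of $J_l$ and hence lies in $J_l$, while the rest lie in $P_{l-1}$ and so, by the inductive hypothesis, admit refinements by the grid of $A$. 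These refinements agree on common faces and glue into an injective map $g:\square[m_1,\dotsc,m_d]\to I$; since the pieces were cut precisely at the grid hyperplanes of $A$ and each is free of interior grid values, the image of $g$ in direction $a$ is exactly $\{t\in\im(f_a)\mid x_a\le t\le y_a\}\cup\{x_a,y_a\}$.

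The extension property follows: given injective $h:\square[p_1,\dotsc,p_d]\to I$ with all $p_a\ne 0$, the box $(h(0,\dotsc,0),h(p_1,\dotsc,p_d))$ is a non-degenerate $d$-box and so is not contained in the $(d-1)$-truncated $A$; applying the previous paragraph to the image of each cell of $\square[p_1,\dotsc,p_d]$ yields refinements by the grid of $A$ that agree on shared faces and assemble, via closure under joins, into an injective $h':\square[p_1',\dotsc,p_d']\to I$ with $\im(h'_a)=\{t\in\im(f_a)\mid h_a(0)\le t\le h_a(p_a)\}\cup\im(h_a)$. I expect the main obstacle to be the join case in the construction of $g$: checking that the piecewise refinements glue to a genuine map of pasting shapes and that their grid lines are exactly the claimed set requires careful bookkeeping with the join operation, and this is where closure under joins and the precise division-pair box-description of \ref{prop.div-pair-boxes} are doing the real work.
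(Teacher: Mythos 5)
Your setup (peeling off the decomposition one $J_i$ at a time and invoking the box description from \ref{prop.div-pair-boxes}) is reasonable, and your treatment of the membership criterion and of the final ``in particular'' clause is essentially fine. But the central step --- constructing $g$ for a box $(x,y)$ that lies in neither $A$ nor any $J_l$ --- has a genuine gap. You propose to \emph{cut} $(x,y)$ along the grid hyperplanes of $A$ meeting its interior and then refine the resulting pieces. For this to make sense, the sub-boxes produced by the cut must themselves be boxes of $I$ (and the central one a box of $J_l$, the outer ones boxes of $P_{l-1}$). That is exactly the content of the lemma: the existence of the injective map $g:\square[m_1,\dotsc,m_d]\rightarrow I$ with $\im(g_a)=\{t_a\in\im(f_a)\mid x_a\leq t_a\leq y_a\}\cup\{x_a,y_a\}$ \emph{is} the assertion that all those cut pieces lie in $I$. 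It cannot be extracted from \ref{prop.div-pair-boxes}: condition (ii) in that proof is a purely coordinatewise condition on $(x,y)$ and comes with no witnessing sub-boxes, so ``arises as a join across the cell of $J_l$'' does not hand you actual boxes of $I$ to cut along. Joins only propagate one way --- if the pieces are in $I$ then so is their join --- and the converse is what your argument needs.

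The paper avoids this by running the induction in the opposite direction. It defines $N^k$ to be the set of non-degenerate $k$-boxes that either lie in $A$ or admit the desired refinement $g$, proves that $\bigcup_k N^k$ is closed under faces and under joins (the join of two refinable boxes is refinable, by gluing the two refinements along their common face --- the easy direction), and then observes that $A$ and every $J_i$ are contained in this subshape. Since $I=A\cup\bigcup_i J_i$ is by definition generated from these under faces and joins, every box of $I$ is accounted for. To repair your proof you would have to carry out essentially this closure argument, i.e.\ record the refinement $g$ as part of the inductive data rather than trying to reconstruct it by subdividing an arbitrary box after the fact; as written, the ``careful bookkeeping'' you defer is not bookkeeping but the actual mathematical content of the lemma.
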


\begin{proof}
	Similarly as in the proof of \ref{prop.div-pair-boxes}, we inductively define sets $N^k$ of non-degenerate $k$-boxes of $I$. 
	We set $N^0 := B^0(A) \cup \bigcup_i B^0(J_i)$. If $N^{k-1}$ is defined, we define $N^k$ to be the set 
	of non-degenerate $k$-boxes $(x,y)$ of $I$ whose faces are in $N^{k-1}$, and which have one of the following properties: 
	\begin{enumerate}
		\item $(x,y) \in B^k(A)$, 
		\item $(x,y) \notin B^k(A)$, $x_a, y_a \notin \im(f_a)$ if $x_a = y_a$, and there exists an injective map 
	$$
	g : \square[m_1, \dotsc, m_d] \rightarrow I,
	$$
	such that $$\im(g_a) = \{t_a \in \im(f_a) \mid x_a \leq t_a \leq y_a\} \cup \{x_a, y_a\}.$$ 
	\end{enumerate} 
	By construction, the subset of boxes
	$$
	\bigcup_{k=0}^{d} N^k \subset B^d(I)
	$$
 	is closed under faces. We will show that it is also closed under joins and hence defines a subshape $I' \subset I$. 

	Clearly, $N^0$ is closed under joins. Suppose that $N^{k-1}$ is closed under joins. Then we show 
	that the join of adjacent boxes $(x,y)$ and $(x',y')$ in $N^k$ is again in $N^k$. The faces of the join $(x,y')$ must be in $N^{k-1}$, since $N^{k-1}$ is assumed to be closed under joins.
	Thus we must check that the join $(x,y')$ has property (1) or (2).
	If $k = d$, $(x,y)$ and $(x',y')$ must have property (2) since $A$ has no non-degenerate $d$-boxes. If $k < d$, then $(x,y)$ and $(x',y')$ either must have both property (1) or both property (2), 
	since any box $(s,t)$ in $A$ has the property that $s_b = t_b \in \im(f_b)$ for some index $b$. 
	In both cases, it is readily verified that the join of $(x,y)$ and $(x',y')$ again has the same property, hence is in $N^k$.
	 
	We now claim that the resulting subshape $I'$ coincides with $I$. We then obtain a description of the boxes of $I$, from which the first statement of the lemma 
	follows. 
	By assumption, we may write $I$ as the union
	 $$I = A \cup J_1 \cup \dotsb \cup J_m,$$
	Thus it suffices that $A$ and the $J_i$'s are contained in $I'$. The first assertion is clear.
	Suppose that $(x,y)$ is a box in $J_i$, then clearly the desired injective map $g$ exists (with $n_1, \dotsc, n_d \leq 1)$. 
	If $x_b = y_b \in \im(f_b)$ for some index $b$, then we must have that $(x,y)$ is contained in the boundary $\partial J_i \subset A$. Thus, in either case, we have 
	$(x,y) \in I'$. 

	The latter statement of the lemma follows from the preceding assertion and the fact that $A$ has no non-degenerate $d$-boxes.
\end{proof}

\begin{example}
	Consider the pasting shape $I$ and the open subgrid $A \subset I$ of \ref{ex.decomp}. This open subgrid $A$ is part of a decomposition $(A, \{J_i\})$ for $I$.
	The box corresponding to the subshape \[
		\begin{tikzcd}[column sep = 8, row sep = 8, ampersand replacement = \&, execute at end picture={
			\scoped[on background layer]
			\fill[pamblue, opacity = 0.5] (a.center) -- (b.center) -- (d.center) -- (c.center);
		}]
			  |[alias=a]|(1,1)\arrow[rr]\arrow[dd] \& \phantom{(2,1)} \& |[alias=b]|(3,1)\arrow[dd]\\
			 \phantom{(1,2)}\& \&  \\
			 |[alias=c]|(1,3)\arrow[rr] \&\& |[alias=d]| (3,3)  
		\end{tikzcd}
		\;\;\;\;\;\subset\;\;\;\;\; I
	\]
	admits a factorization $g : \square[2,2] \rightarrow I$ as in \ref{lemma.decomp-fact-prop}, classified by the subshape 
	\[
		\begin{tikzcd}[column sep = 8, row sep = 8, execute at end picture={
			\scoped[on background layer]
			\fill[pamblue, opacity = 0.5] (a.center) -- (b.center) -- (d.center) -- (c.center);
		}]
			  |[alias=a]|(1,1)\arrow[r]\arrow[d] & (2,1)\arrow[d]\arrow[r] & |[alias=b]|(3,1)\arrow[d]\\
			 (1,2)\arrow[d]\arrow[r] & (2,2)\arrow[r]\arrow[d] & (3,2) \arrow[d] \\
			 |[alias=c]|(1,3)\arrow[r] & (2,3)\arrow[r] & |[alias=d]| (3,3)   \\
		\end{tikzcd}
		\;\;\;\;\;\subset\;\;\;\;\; I.
	\]
\end{example}

\begin{proposition}\label{prop.closed-entire-admittable}
	Any closed entire subshape $E$ of a $d$-dimensional admittable pasting shape $I$ is again admittable.
\end{proposition}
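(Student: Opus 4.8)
\subsection*{Proof proposal}

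The plan is to induct on the length $n$ of a filtration $I_0 \subset I_1 \subset \dotsb \subset I_n = I$ witnessing the admittability of $I$, in the sense of \ref{def.admittable-shapes}. For $n = 0$ the shape $I$ is a grid. If $I$ is open it has no non-degenerate $d$-boxes, hence no closed entire subshapes at all, so there is nothing to prove. If $I$ is a closed grid witnessed by a $d$-shaping map $f \colon \square[m_1, \dotsc, m_d] \to I$ and $E \subset I$ is closed entire with bounding box $(\mu, \nu) \in I$, then since $f$ is $d$-shaping the non-degenerate $d$-box $(\mu, \nu)$ is the image under $f$ of a non-degenerate $d$-box $(p, q)$ of $\square[m_1, \dotsc, m_d]$, so that $\mu_a = f_a(p_a)$ and $\nu_a = f_a(q_a)$ with $p_a < q_a$. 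I would then check that $E$ inherits from $I$ the structure of a closed grid, witnessed by the composite of $f$ with the sub-grid inclusion $\square[q_1 - p_1, \dotsc, q_d - p_d] \to \square[m_1, \dotsc, m_d]$ that translates by $(p_1, \dotsc, p_d)$; in particular $E$ is admittable.

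For the inductive step, write $I = K \cup A$ with $K = I_{n-1}$ admittable via a filtration of length $n-1$, with $A = A_n$ a grid and $K \cap A = \partial A$ an open vertebra of $K$; we may assume this step is nontrivial. Let $((\alpha_1, \omega_1), \dotsc, (\alpha_d, \omega_d))$ denote the corners of $A$ and let $E \subset I$ be closed entire with bounding box $(\mu, \nu) \in I$. By \ref{prop.div-pair-boxes} applied to the division pair $(K, A)$, a box $(x, y)$ of $I$ lies in $K$ if and only if $y_a \leq \alpha_a$ or $x_a \geq \omega_a$ for some index $a$. As $(\mu, \nu)$ is non-degenerate while $\partial A$ contains only degenerate $d$-boxes, the box $(\mu, \nu)$ lies in exactly one of $K$ and $A \setminus K$.

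Suppose first that $(\mu, \nu) \in K$, and choose an index $a$ with $\nu_a \leq \alpha_a$ or $\mu_a \geq \omega_a$. Since every box $(x, y)$ of $E$ satisfies $\mu_b \leq x_b, y_b \leq \nu_b$ for all $b$, the same index $a$ witnesses that $(x, y) \in K$, so $E \subset K$. The box $(\mu, \nu)$ and, by closure under faces, its faces lie in $K$, and any box of $K$ all of whose coordinates lie between the corresponding coordinates of $\mu$ and $\nu$ is a box of $I$ and hence of $E$; thus $E$ is a closed entire subshape of $K$, and the induction hypothesis applies. Suppose instead that $(\mu, \nu) \in A \setminus K$. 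Then $A$ has a non-degenerate $d$-box, so $A$ is a closed grid, and arguing as in the base case one sees $\alpha_a \leq \mu_a < \nu_a \leq \omega_a$ for all $a$. Now let $(x, y) \in E$, so $(x, y) \in I = K \cup A$; if $(x, y) \in K$ then, since $\partial A$ is an open vertebra of $K$ and therefore equals — by the defining property of an entire subshape — the set of boxes of $K$ whose coordinates all lie between the corresponding coordinates of $\alpha$ and $\omega$, the box $(x, y)$ lies in $\partial A \subset A$. Hence $E \subset A$, so $E$ is a closed entire subshape of the grid $A$ and we conclude by the base case.

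The crux is this last case, and the decisive point is the observation that the hypothesis ``$\partial A$ is an open vertebra of $K$'' forces $\partial A$ to be precisely the collection of boxes of $K$ lying within the corner region of $A$; granting this, the inclusion $E \subset A$ is immediate and the problem reduces to the base case. Establishing the base case itself — that a closed entire subshape of a grid is again a grid — requires some bookkeeping with the definition of grids (in particular with how boundary $1$-boxes may be subdivided), but is otherwise routine.
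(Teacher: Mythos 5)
Your base case (closed entire subshapes of grids are grids) and your Case 1 (bounding box in $K$) are fine, but the case analysis in the inductive step is not exhaustive, and the missing case is precisely the hard one. You assert that the bounding box $(\mu,\nu)$ ``lies in exactly one of $K$ and $A\setminus K$'', i.e.\ that every non-degenerate $d$-box of $I=K\cup A$ belongs to $K$ or to $A$. This is false: the union of pasting shapes is by definition closed under joins, so $I$ generically contains $d$-boxes obtained by joining a box of $K$ with a box of $A$ that lie in neither subshape. This is exactly what condition (ii) in the proof of \ref{prop.div-pair-boxes} records --- those are the boxes $(x,y)$ with $x_a<\omega_a$ and $y_a>\alpha_a$ for all $a$ but $x_b<\alpha_b$ or $y_b>\omega_b$ for some $b$. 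A concrete instance is the rightmost shape of \ref{ex.admittable}: for the division pair consisting of the two cells (plus the open grid), the bounding box $((0,0),(2,2))$ of $E=I$ is such a join box, so neither of your two cases applies, and $E$ is contained in neither $K$ nor $A$. Since any composable shape of positive height is a closed entire subshape of itself with a join bounding box, the omitted case is the generic one.

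This is not a repairable oversight within your framework: when $(\mu,\nu)$ straddles several pieces, $E$ cannot be pushed into a single inductive stage, and one must instead show that $E$ is itself admittable by producing a new filtration or decomposition for it. That is how the paper proceeds: it extends the bounding box to an injective map $g:\square[m_1,\dotsc,m_d]\rightarrow I$ whose images $\im(g_a)$ pick up all grid lines of the ambient open grid lying between $x_a$ and $y_a$ (the factorization property \ref{lemma.decomp-fact-prop}), takes $B$ to be the open subgrid determined by $g$, lets the $E_j$ be the closed entire subshapes cornered at the cells of $B$ (each contained in a single $J_i$, hence admittable by induction on height), and verifies $E=B\cup\bigcup_j E_j$ is a decomposition. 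You would need to add an argument of this kind to close the gap; \ref{lemma.decomp-fact-prop} is the tool the paper introduces for exactly this purpose.
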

\begin{proof}
	We proceed by induction on the height $h$ of $I$. If $h = 0$, the statement is clear. Suppose that the statement holds 
	for any admittable pasting shape of height $h$.
	
	Let $I$ be a composable pasting shape of height $h+1$ with decomposition $(A, \{J_i\})$. Write $f : \dtr_{\leq d-1}\square[n_1, \dotsc, n_d] \rightarrow A$ 
	for the map that witnesses $A$ to be an open grid. Denote the bounding box of $E$ by $(x,y)$. Since $E$ is entire, $(x,y)$ must be a non-degenerate $d$-box contained in $I$.
	Consequently,  there exists an extension 
	$$
	g : \square[m_1, \dotsc, m_d] \rightarrow I,
	$$
	with $\im(g_a) = \{t_a \in \im(f_a) \mid x_a \leq t_a \leq y_a\} \cup \{x_a,y_a\}$ on account of \ref{lemma.decomp-fact-prop}.
	Let $B$ be the open subgrid of $I$ determined by $g$. We note that for any inert map 
	$$
	j : \square[1, \dotsc, 1] \rightarrow \square[m_1, \dotsc, m_d],
	$$
	the restriction $gj$ classifies the corners of a closed entire, $d$-dimensional subshape $E_j \subset E$, which has the property that $E \subset J_i$ for some $i$. 
	Thus $E_j$ is admittable by the induction hypothesis. Note that $E_j \cap B = \partial E_j$. Using \ref{lemma.decomp-fact-prop}, one may again show that 
	$$
	E = B \cup \bigcup_{j} E_j.
	$$
	From this it follows that $E$ is admittable with decomposition $(B, \{E_j\})$.
\end{proof}

Using a similar strategy as employed in the proof of \ref{lemma.decomp-fact-prop}, we can  show a stronger factorization property for composable pasting shapes:

\begin{lemma}\label{lemma.decomp-fact-prop-comp}
	Suppose that $I$ is a $d$-dimensional composable pasting shape and let 
	$$
	f : \square[n_1, \dotsc, n_d] \rightarrow I
	$$ be an injective map that carries corners to corners. Then for any box $(x,y) \in I$, there exists an injective map 
	$$
	g : \square[m_1, \dotsc, m_d] \rightarrow I,
	$$
	such that $$\im(g_a) = \{t_a \in \im(f_a) \mid x_a \leq t_a \leq y_a\} \cup \{x_a, y_a\}.$$  
\end{lemma}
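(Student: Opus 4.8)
The plan is to follow the strategy of the proof of \ref{lemma.decomp-fact-prop}. First I would translate the statement through \ref{prop.simplices-nerve}: writing $S_a := \im(f_a)$, the hypothesis that $f$ is a map of pasting shapes says exactly that every box $(s,t)$ with $s_a,t_a \in S_a$ for all $a$ belongs to $I$, and for a given box $(x,y) \in I$ the conclusion is equivalent to the assertion that every box $(s,t)$ with $s_a,t_a \in T_a$ belongs to $I$, where $T_a := \{t \in S_a \mid x_a \le t \le y_a\} \cup \{x_a,y_a\}$ (the map $g$ in the statement then exists, is unique, and is automatically injective, again by \ref{prop.simplices-nerve}, once all these ``refined'' boxes are known to lie in $I$). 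So the content is that the $S$-grid boxes of $I$ propagate to a refinement of every box of $I$.

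Mirroring \ref{lemma.decomp-fact-prop}, I would then define by induction on $k$ the set $N^k$ of non-degenerate $k$-boxes $(x,y)$ of $I$ all of whose faces lie in $N^{k-1}$ (with $N^0 := B^0(I)$) and which satisfy the refinement condition above, and show that $\bigcup_{k=0}^d N^k$ constitutes a subshape $I' \subseteq I$. Closure under faces is immediate, since the refinement condition for a face $(x',y')$ of $(x,y)$ is inherited from that of $(x,y)$ by restricting the relevant tuples. Closure under joins is the one point requiring an argument: given adjacent boxes $(x,y),(x',y') \in N^k$ meeting in direction $a_i$ at the cut value $y_{a_i}=x'_{a_i}$, a box $(s,t)$ whose coordinates lie in the $T_a$'s of the join either lies entirely below or entirely above this value in direction $a_i$ --- in which case its coordinates already lie in the $T_a$'s of $(x,y)$ or of $(x',y')$ and it belongs to $I$ --- or it straddles the value, in which case it is the join of two such boxes and hence belongs to $I$ because $I$ is closed under joins. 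Thus the join again satisfies the condition, with no need for an explicit coarsening step.

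It remains to prove $I' = I$, which is the main obstacle. One first observes that $I'$ contains the image grid $f(\square[n_1,\dotsc,n_d])$ and hence the corners of $I$, but this does not by itself force $I'=I$. I would prove $I'=I$ by induction on the height of the composable --- in particular admittable --- pasting shape $I$ (and, for the passage to entire subshapes of lower dimension, on $d$ as well), using a decomposition $(A,\{J_i\})$ of $I$. A box not contained in the open grid $A$ can first be refined by the grid lines of $A$ itself via \ref{lemma.decomp-fact-prop}, producing a subgrid whose cells are closed entire subshapes of the $J_i$; since any closed entire subshape of a composable pasting shape is again composable (immediate from \ref{def.composable-shapes} together with \ref{prop.closed-entire-admittable}) and has strictly smaller height, the inductive hypothesis refines each cell by the lines of $f$, compatibly along shared faces. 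A box lying in $A$ has dimension $< d$ and is the bounding box of a closed entire subshape of $I$ which, by condition (2) of \ref{def.composable-shapes}, is a composable pasting shape of strictly smaller dimension, so one appeals to the statement one dimension down. The genuine difficulty concealed in all of this is that the corner-preserving grid $f$ need not be compatible with the decomposition $(A,\{J_i\})$: a grid line of $f$ need not be a grid line of the base grid of $I$, and conversely. Overcoming this amounts to showing that each hyperplane $z_a = c$ with $c \in S_a$ cuts $I$ cleanly --- that every box $(x,y)\in I$ with $x_a\le c\le y_a$ yields boxes of $I$ when the value $c$ is inserted as its $a$-th coordinate --- and this is the step where \ref{prop.div-pair-boxes}, \ref{lemma.decomp-fact-prop} and \ref{prop.closed-entire-admittable} must be combined with care.
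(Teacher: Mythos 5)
Your outline reproduces the architecture of the paper's own proof almost exactly: the translation through \ref{prop.simplices-nerve}, the sets $N^k$ of boxes admitting the desired refinement assembled into a subshape $I'\subset I$ (your verification of closure under joins is in fact more explicit than the paper's), and the double induction on dimension and height over a decomposition $(A,\{J_i\})$, reducing everything to showing that $A$ and each $J_i$ lie in $I'$. But the proposal is not a proof: you correctly identify the crux --- that $f$ need not be compatible with the decomposition, so the restriction of $f$ to a $J_i$ need not hit the corners of $J_i$ and the height induction cannot be applied to $J_i$ with $f$ itself --- and then you stop there. Recasting the difficulty as the claim that every hyperplane $z_a=c$ with $c\in\im(f_a)$ ``cuts $I$ cleanly'' is not a reduction; it is essentially a restatement of the lemma, and saying that \ref{prop.div-pair-boxes}, \ref{lemma.decomp-fact-prop} and \ref{prop.closed-entire-admittable} ``must be combined with care'' leaves the decisive step undone.

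The missing idea is the final clause of \ref{lemma.decomp-fact-prop}, which hands you the compatibility you are worried about for free. Writing $\phi$ for the map witnessing the open grid $A$, that clause extends the injective map $f$ to an injective $f'\colon\square[n_1',\dotsc,n_d']\rightarrow I$ with $\im(f'_a)=\{t_a\in\im(\phi_a)\mid f_a(0)\leq t_a\leq f_a(n_a)\}\cup\im(f_a)=\im(\phi_a)\cup\im(f_a)$, the last equality because $f$ carries corners to corners; that is, the common refinement of the two grids already lives inside $I$. The corners of each $J_i$ are corners of a vertebra of $A$ (cf.\ \ref{ex.trivial-grid}), hence lie in $\im(\phi_a)$, so the restriction $r$ of $f'$ over the bounding box of $J_i$ is an injective, \emph{corner-preserving} map into $J_i$ with $\im(r_a)=\{t_a\in\im(f_a)\mid j_a(0)\leq t_a\leq j_a(1)\}\cup\{j_a(0),j_a(1)\}$. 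The height induction applied to the pair $(J_i,r)$ then refines every box of $J_i$ by exactly the $f$-lines (the spurious corners $j_a(0),j_a(1)$ that $r$ adds can only enter the output as $x_a$ or $y_a$, so the resulting image agrees with the one required for $f$); boxes of $A$ are handled by slicing along a hyperplane and invoking the dimension induction. With this in place there is nothing left to combine with care, and your sketch closes up into the paper's proof.
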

\begin{proof}
	We proceed by induction on the dimension $d$. For $d = 0$, the statement is clear. Suppose the statement holds for $(d-1)$-dimensional composable pasting shapes.
	If $I$ has height $0$, then $I$ is a cell, and there is nothing to prove. Suppose that the statement holds for all $d$-dimensional composable pasting shapes 
	that have height $h$.

	Let $I$ be a composable pasting shape of height $h+1$ with decomposition $(A, \{J_i\})$. During this proof, we write $N^k$ for the set of non-degenerate $k$-boxes $(x,y)$ of $I$ with the property that there exists an injective map 
	$$
	g : \square[m_1, \dotsc, m_d] \rightarrow I,
	$$
	such that $$\im(g_a) = \{t_a \in \im(f_a) \mid x_a \leq t_a \leq y_a\} \cup \{x_a, y_a\}.$$ 
	Then the subset of boxes given by
	$$
	\bigcup_{k=0}^{d} N^k \subset B^d(I)
	$$
	is closed under joins and faces, hence constitutes a subshape $I' \subset I$. It now suffices to show that $I'$ contains $J_1, \dotsc, J_n$ and $A$.
	
	Suppose that $(x,y)$ is a box of $A$. Then there exists an index $b$ so that $x_b = y_b \in \im(f_b)$. The map 
	$f$ restricts to a $(d-1)$-truncated map
	$$
	f' : \square[n_1, \dotsc, n_{b-1}, 0, n_{b+1}, \dotsc, n_d] \rightarrow A\cap H.
	$$
	This can be viewed as a map between $(d-1)$-dimensional pasting shapes. As the slice $A\cap H$ must be $(d-1)$-dimensional composable pasting shape, for which the statement already holds, 
	it can now readily be deduced that $(x,y) \in I'$. 

	It remains to show that each box $(x,y)$ of $J_i$ is contained in $I'$. We will write
	$$
	\phi : \dtr_{\leq d-1}\square[p_1, \dotsc, p_d] \rightarrow A,
	$$
	for the injective map that witnesses $A$ to be an open grid. On account of \ref{lemma.decomp-fact-prop},
	there exists an injective map 
	$$
	f' : \square[n_1', \dotsc, n_d'] \rightarrow I
	$$
	such that $\im(f'_a) = \im(\phi_a) \cup \im(f_a)$. Note that the corners of $J_i$ are classified by an inert map 
	$$
	j : \square[1, \dotsc, 1] \rightarrow \square[p_1,\dotsc, p_d],
	$$
	which determines a restriction $r$ of $f'$ that has image given by
	$$\im(r_a) = \{t_a \in \im(f_a) \mid j_a(0) \leq t_a \leq j_a(1)\} \cup \{j_a(0), j_a(1)\} \subset \im(f'_a)$$
	for all $a$. The lemma holds for $J_i$ by assumption as it is composable and has height $h$, hence 
	we may apply the lemma with respect to $(x,y)$ and $r$, yielding the desired factorization $g$ for $(x,y)$.
\end{proof}

\begin{proof}[Proof of \ref{thm.baby-pasting-theorem}]
	We proceed by induction on the height $h$ of $I$. If 
	$I$ has height $0$, there is nothing to show.  Suppose now that the theorem holds for any $d$-dimensional composable pasting shape $I$ of height $h$.
	Let $I$ be a composable pasting shape of height $h+1$ with decomposition $(A, \{J_i\})$. Then there exists an injective map 
	$$
	f : \square[n_1,\dotsc, n_d] \rightarrow I
	$$ 
	such that $\dtr_{\leq d-1}f$ witnesses $A$ to be an open grid. Using \ref{prop.div-pair-boxes} and \ref{lemma.decomp-fact-prop}, 
	one may verify that the vertebrae of $I$ are given by the vertebrae of the $J_i$'s. Thus the 
	induction hypothesis asserts that the vertebrae of 
	$I$ are composable and 
	$$
	 \bigcup_{V \subset I \textit{ vertebra }} V = \bigcup_i J_i.
	$$
	Hence, we must show every box $(x,y)$ of $I$ is contained in the union of the $J_i$'s.
	This readily follows from the factorization property of \ref{lemma.decomp-fact-prop-comp}.
\end{proof}

We may slightly strengthen \ref{thm.baby-pasting-theorem} by introducing so-called \textit{locally composable} pasting shapes. This is 
an auxiliary notion, albeit a pragmatic notion to set up the theory.

\begin{definition}\label{def.locally-composable}
	A $d$-dimensional pasting shape $I$ with finitely many boxes is called \textit{locally composable} if every closed $k$-entire subshape of $I$, with $1 \leq k \leq d$, is a $k$-dimensional admittable pasting shape.
\end{definition}

The following is a direct result of \ref{prop.closed-entire-admittable}:

\begin{proposition}
	A $d$-dimensional pasting shape $I$ is composable if and only if $I$ is locally composable and closed $d$-entire.
\end{proposition}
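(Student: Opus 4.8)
The plan is to prove the two implications separately, unwinding \ref{def.composable-shapes} and \ref{def.locally-composable} and invoking \ref{prop.closed-entire-admittable} as the only substantive input. The point is that the two notions differ only in the range of $k$ they quantify over, and in whether being closed $d$-entire is built into the definition or assumed separately.

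First I would treat the implication that composability implies local composability together with being closed $d$-entire. Given a composable $I$, condition~(1) of \ref{def.composable-shapes} directly provides that $I$ is admittable and closed $d$-entire. Before checking local composability one must record that $I$ has only finitely many boxes: each grid has boundedly many vertices and hence finitely many boxes, and by \ref{def.admittable-shapes} an admittable shape is a union of finitely many grids. It then remains to show that every closed $k$-entire subshape $E \subset I$ with $1 \le k \le d$ is $k$-dimensional admittable. For $k < d$ this is verbatim condition~(2) of \ref{def.composable-shapes}; for $k = d$ it is exactly the content of \ref{prop.closed-entire-admittable} applied to the admittable pasting shape $I$.

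Conversely, suppose $I$ is locally composable and closed $d$-entire. The crucial (and essentially only) observation is that $I$ is a closed $d$-entire subshape of \emph{itself}, so the case $k = d$ of \ref{def.locally-composable} shows that $I$ is $d$-dimensional admittable; together with the standing hypothesis that $I$ is closed $d$-entire, this is precisely condition~(1) of \ref{def.composable-shapes}. Condition~(2) is then immediate: for $1 \le k < d$ it is the corresponding instance of local composability, and the remaining degenerate case $k = 0$ is trivial since a $0$-entire subshape is a single vertex. Hence $I$ is composable.

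I do not anticipate a genuine obstacle. All of the real work sits in results and definitions already in place; the only subtlety worth flagging is that \ref{def.composable-shapes} does not literally impose the finiteness clause appearing in \ref{def.locally-composable}, so one should not forget to deduce finiteness from admittability in the forward direction, and one should be slightly careful that ``closed $d$-entire subshape of $I$'' is allowed to be $I$ itself, which is what makes the converse direction work.
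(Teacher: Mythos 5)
Your proof is correct and follows exactly the route the paper intends: the paper offers no written proof beyond the remark that the statement is ``a direct result of \ref{prop.closed-entire-admittable}'', and your argument is precisely the unwinding of \ref{def.composable-shapes} and \ref{def.locally-composable} with that proposition supplying the $k=d$ case of the forward direction. Your attention to the two edge cases (finiteness of the box set, deduced from admittability, and the degenerate $k=0$ case in the converse) is exactly the right bookkeeping and introduces no gap.
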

	
\begin{corollary}\label{cor.baby-pasting-theorem-lc}
	Suppose that $I$ is a $d$-dimensional locally composable pasting shape, then $I$ can be written as the union 
	$$
	I = \bigcup_{V \subset I \text{ closed k-vertebra, } 0 \leq k \leq d} V.
	$$
\end{corollary}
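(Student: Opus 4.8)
The strategy is to reduce the statement to \ref{thm.baby-pasting-theorem} by covering $I$ with its closed entire subshapes. First I would observe that every box of $I$ lies in a closed entire subshape: given a box $(x,y)$ of $I$, view it as a non-degenerate $(d,k)$-box where $k$ is the number of indices $a$ with $x_a<y_a$; since $(x,y)\in I$, all of its faces lie in $I$ by closure under faces, so there is a closed $k$-entire subshape $E_{(x,y)}\subseteq I$ with bounding box $(x,y)$, and $(x,y)\in E_{(x,y)}$ because $E_{(x,y)}$ is closed. For $k=0$ the subshape $E_{(x,y)}$ is a single vertex, which is trivially a closed $0$-vertebra, so those boxes are already accounted for; thus it remains to treat the closed $k$-entire subshapes $E\subseteq I$ with $1\le k\le d$. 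Since $I$ has finitely many boxes, there are only finitely many such $E$, and also only finitely many closed $k$-vertebrae, so all the unions below are finite.

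Next I would promote each such $E$ from merely ``closed $k$-entire'' to ``composable''. Viewing $E$ as a $k$-dimensional pasting shape through the embedding of \ref{rem.hyperplane-embedding}, it has finitely many boxes (being a subshape of the finite pasting shape $I$), it is closed $k$-entire in itself, and every closed $l$-entire subshape of $E$ with $1\le l\le k$ is --- after translating back along the embedding --- a closed $l$-entire subshape of $I$, hence an $l$-dimensional admittable pasting shape since $I$ is locally composable (\ref{def.locally-composable}). Therefore $E$ is itself locally composable, and, being closed $k$-entire, it is composable by the characterization of composable pasting shapes established just before this corollary.

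I would then apply \ref{thm.baby-pasting-theorem} to each $E$: it shows that $E$ is the union of its vertebrae, each of which is composable and in particular closed. Finally I would check that a vertebra $V$ of $E$ is in fact a closed $k$-vertebra of $I$. That $V$ is a closed $k$-entire subshape of $I$ follows by the same translation argument as above. That $V$ is minimal among $k$-entire subshapes of $I$ follows because any $k$-entire subshape $E'\subseteq V$ has its bounding box contained in that of $E$, hence is also $k$-entire \emph{in} $E$, so $E'=V$ by minimality of $V$ in $E$. Putting the pieces together, $I=\bigcup_{(x,y)}E_{(x,y)}=\bigcup_{(x,y)}\bigcup_{V\text{ vertebra of }E_{(x,y)}}V$, which is contained in the union of all closed $k$-vertebrae of $I$ over $0\le k\le d$; the reverse inclusion is immediate since each such vertebra is a subshape of $I$.

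I expect the only real work to be the routine-but-fiddly bookkeeping around the embedding $i_h$ of \ref{rem.hyperplane-embedding}: identifying a $k$-entire subshape of $I$ that lies in a $k$-dimensional hyperplane with an honest $k$-dimensional pasting shape, and checking that the notions ``closed'', ``$k$-entire'', ``admittable'' and ``is a vertebra'' all match up under this identification in both directions. Once this dictionary is in place, the corollary is a formal consequence of \ref{thm.baby-pasting-theorem} and the preceding characterization of composability.
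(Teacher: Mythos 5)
Your proposal is correct and follows essentially the same route as the paper: for each $k$-box $(x,y)$ of $I$, pass to the closed $k$-entire subshape it determines, observe that local composability makes this subshape a $k$-dimensional composable pasting shape, and apply \ref{thm.baby-pasting-theorem} to write it as the union of its (closed) $k$-vertebrae. The paper's proof is just a terser version of yours, leaving implicit the bookkeeping you spell out about transporting the notions ``entire'' and ``vertebra'' between $E$ and $I$.
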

\begin{proof}
	Suppose that $(x,y)$ is a $k$-box of $I$. Then $(x,y)$ determines a $k$-entire subshape $E$ of $I$ which is a $k$-dimensional composable pasting shape by assumption. In view of \ref{thm.baby-pasting-theorem}, 
	$E$ may be written as the union of its (necessarily closed) $k$-vertebrae. Hence $(x,y)$ is contained in the union that is displayed in the statement of the corollary. 
\end{proof}

\begin{example}\label{ex.no-baby-pasting-pinwheel}
	Note that \ref{thm.baby-pasting-theorem} fails to be true in the setting of non-composable pasting shapes. Consider the pinwheel $PW$ that was defined 
	at the start of \ref{ssection.cpshapes}. Then there is a subshape 
	$$PW^\circ \subset PW$$ 
	whose boxes are given by $B^2(PW^\circ) = B^2(PW) \setminus \{((0,0), (3,3))\}$. It is readily verified 
	that $PW^\circ$ is locally composable. Since every $1$-vertebra of $PW^\circ$ is contained in a $2$-vertebra of $PW^\circ$, 
	\ref{cor.baby-pasting-theorem-lc} asserts that 
	$$
	PW^\circ = \bigcup_{V \subset PW^\circ \text{ $k$-vertebra, } 0 \leq k \leq 2} V = \bigcup_{V \subset PW^\circ \text{ vertebra}} V.
	$$
	But the vertebrae of $PW^\circ$ are precisely those of $PW$. Since $PW \neq PW^\circ$, this shows that \ref{thm.baby-pasting-theorem} fails for the pinwheel.
\end{example}

\subsection{The pasting theorem}\label{ssection.ptheorem}
Having developed the basic theory of pasting shapes, we are now able to formulate the pasting theorem which will be proven in the next section.

\begin{definition}
Let $I$ be a $d$-dimensional pasting shape. Then a \textit{covering of $I$} is a collection $I_1, \dotsc, I_n$ of subshapes of $I$, such that 
\begin{enumerate}
	\item every closed $k$-entire subshape of $I_i$, with $0 \leq k \leq d$, is entire in $I$,
	\item every closed $k$-vertebra of $I$, with $0 \leq k \leq d$, is contained in some $I_i$.
\end{enumerate}
\end{definition}

\begin{example}\label{ex.coverings}
	We have the following canonical examples of coverings:
	\begin{enumerate}
	\item Every composable pasting shape is covered by its vertebrae.
	\item In general, if $I$ is a locally composable pasting shape, then the collection of closed $k$-vertebrae of $I$, where $k$ ranges over all dimensions $0, \dotsc, d$, 
	is a covering of $I$.
	\item If $I$ is an admittable pasting shape with decomposition $(A, \{J_i\})$ then $A$ together with the $J_i$'s form a covering for $I$.
	\end{enumerate}
\end{example}

We would like to phrase our pasting theorem for $d$-uple Segal spaces 
as a statement about the nerve functor preserving certain colimits. 
To this end, we will need to understand how unions of pasting shapes may be described as colimits.
This can be done as follows. Suppose that $S$ is a set of $d$-dimensional pasting shapes. Then we may consider the set of pasting shapes given by the non-empty intersections 
$$
\textstyle \mathcal{P}(S) := \{\bigcap_{I \in T}I \mid T \subset S\} \setminus \{\emptyset\}
$$
which becomes a (finite) poset under inclusion. Now, one may readily verify that the colimit 
of the tautological functor $$Q : \mathcal{P}(S) \rightarrow \Shape^d : J \mapsto J$$ is given by 
the union $\bigcup_{I \in S} I$.  

\begin{theorem}[The pasting theorem]\label{thm.pasting-theorem}
Suppose that $I_1, \dotsc, I_n$ is a covering of a $d$-dimensional locally composable pasting shape $I$. Then $I$ can be written as
$$
I = \bigcup_{i=1}^n I_i,
$$
and this union is preserved by the nerve functor $[-] : \Shape^d \rightarrow \Cat^d(\S)$, so that the comparison map
$$
\textstyle \colim_{J \in \mathcal{P}(\{I_1, \dotsc, I_n\})} [J] \rightarrow [I]
$$
is an equivalence of $d$-uple Segal spaces.
\end{theorem}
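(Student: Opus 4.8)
The plan is to prove the theorem by induction on the dimension $d$, after first reducing the colimit statement to an assertion about Segal equivalences.

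\emph{Reduction.} The identity $I=\bigcup_{i=1}^n I_i$ at the level of pasting shapes is immediate: by \ref{cor.baby-pasting-theorem-lc}, $I$ is the union of its closed $k$-vertebrae, each of which lies in some $I_i\subseteq I$ by condition~(2) of a covering. From the description of simplices in \ref{prop.simplices-nerve} one sees that the nerve functor carries finite intersections of subshapes to intersections of $d$-uple simplicial subsets. Since the colimit of the tautological diagram $\mathcal{P}(\{I_1,\dotsc,I_n\})\to\Shape^d$ can be built from finitely many pushouts along monomorphisms of $d$-uple simplicial sets, the fact underlying \ref{prop.pushout-seq} — that such pushouts are preserved by the inclusion into $\fun(\Delta^{\op,\times d},\S)$ — identifies $\colim_{J\in\mathcal{P}(\{I_1,\dotsc,I_n\})}[J]$, formed in $\Cat^d(\S)$, with $L\bigl(\bigcup_{i=1}^n[I_i]\bigr)$. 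Hence the comparison map in the statement is $L$ applied to the inclusion $\bigcup_{i=1}^n[I_i]\hookrightarrow[I]$, and it suffices to show that this inclusion is a Segal equivalence.

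\emph{The truncation step.} For $d=1$ the shapes in sight are subposets of $\N$ and the assertion reduces to one-dimensional spine inclusions (and coproducts of such), which are Segal equivalences. Assume $d\geq 2$ and the theorem in all lower dimensions. I would first treat the $(d-1)$-truncations. For a hyperplane $H$ of dimension at most $d-1$, condition~(1) of a covering — which forces the closed entire subshapes of the $I_i$, and of their intersections, to coincide with those of $I$ — restricts to show that $\{I_i\cap H\}_i$ is again a covering of the locally composable, lower-dimensional pasting shape $I\cap H$. By the inductive hypothesis (using that the nerve intertwines the hyperplane embeddings $i_h$ with $p_h^*$, see \ref{rem.hyperplane-embedding}, and that $p_h^*$ preserves Segal equivalences, being left adjoint to a functor sending Segal spaces to Segal spaces), each $\bigcup_i[I_i\cap H]\hookrightarrow[I\cap H]$ is a Segal equivalence. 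Assembling these over all such $H$ by means of \ref{lemma.unions-seq} — whose intersection hypothesis~(1) is again supplied by the rigidity of intersections forced by covering condition~(1) — and using \ref{prop.desc-trunc}, I obtain that $\dtr_{\leq d-1}\bigl(\bigcup_i[I_i]\bigr)=\bigcup_i\dtr_{\leq d-1}[I_i]\hookrightarrow\dtr_{\leq d-1}[I]$ is a Segal equivalence.

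\emph{The top-dimensional step.} It remains to pass from the $(d-1)$-truncations to the full nerves. Since every simplex of $[I]$ lies in the composable subshape generated by its top box, one has $[I]=\bigcup_E[E]$ with $E$ ranging over the closed entire subshapes of $I$ of all dimensions, and likewise $\bigcup_i[I_i]=\bigcup_E\bigcup_i[E\cap I_i]$. I would therefore apply \ref{lemma.unions-seq} to the collection of pairs $\bigl(\bigcup_i[E\cap I_i],[E]\bigr)$: hypothesis~(1) is once more guaranteed by covering condition~(1), and hypothesis~(2) — that $\bigcup_i[E\cap I_i]\hookrightarrow[E]$ is a Segal equivalence — is the theorem for the composable pasting shape $E$ with its induced covering $\{E\cap I_i\}$. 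The latter is established by a secondary induction on the height of $E$: the height-zero case ($E$ a cell) is trivial because $E$ is then itself a vertebra, hence contained in some $I_i$; and the inductive step uses a decomposition $(A,\{J_k\})$ of $E$, together with the co-Segal property \ref{prop.nerve-cosegal} — which yields that, for the open grid $A$ and for each standard grid sitting inside $E$, the relevant spine inclusions are Segal equivalences — and the factorization lemmas \ref{lemma.decomp-fact-prop} and \ref{lemma.decomp-fact-prop-comp}, describing how the $J_k$'s and their cells sit inside $E$. In this way $\bigcup_i[E\cap I_i]\hookrightarrow[E]$ is built up, through a chain of Segal equivalences using \ref{prop.pushout-seq} and \ref{lemma.unions-seq}, from the corresponding maps for the $J_k$ (of smaller height) and for $A$ (whose nerve is $(d-1)$-truncated and is handled by the same technique as the truncation step, invoking the lower-dimensional case). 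Combining everything, this shows that $\bigcup_i[I_i]\hookrightarrow[I]$ is a Segal equivalence and completes the induction.

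\emph{The main obstacle.} The delicate point is the top-dimensional step: a cell, or more general grid, sub-pasting-shape of $I$ need not be contained in any single $I_i$, since $I$ may subdivide it further. The spine inclusions and the factorization lemmas are precisely the tools that let one resolve such a cell inside the composable subshape it generates, at the cost of an intricate bookkeeping, and the hardest part is verifying at every stage that the intersection hypothesis~(1) of \ref{lemma.unions-seq} holds — equivalently, that finite intersections of the subshapes one glues along remain of the expected form — which is exactly the role of condition~(1) in the definition of a covering.
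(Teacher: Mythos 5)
Your overall architecture matches the paper's: reduce the colimit statement to showing the inclusion $\bigcup_i[I_i]\hookrightarrow[I]$ of $d$-uple simplicial sets is a Segal equivalence (this is \ref{rem.pasting-theorem-union}), then induct on dimension, handling the $(d-1)$-truncation by slicing along hyperplanes and invoking the lower-dimensional case (this is the content of \ref{lemma.pasting-theorem}), and finally reduce the top-dimensional part to a statement about closed entire (hence composable) subshapes attacked by induction on height. Up to and including the truncation step, your argument is essentially the paper's.

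The genuine gap is in the top-dimensional step, which is the technical heart of the paper. You assert that for a composable $E$ with decomposition $(A,\{J_k\})$, the inclusion $\bigcup_i[E\cap I_i]\hookrightarrow[E]$ can be ``built up through a chain of Segal equivalences from the corresponding maps for the $J_k$ and for $A$,'' using the spine inclusions of standard grids and the factorization lemmas \ref{lemma.decomp-fact-prop} and \ref{lemma.decomp-fact-prop-comp}. But $[A]\cup\bigcup_k[J_k]$ is a \emph{proper} simplicial subset of $[E]$: a $d$-dimensional simplex $\square[m_1,\dotsc,m_d]\to E$ with all $m_a\neq 0$ may straddle several $J_k$'s and factor through none of them (nor through $A$, which has no non-degenerate $d$-boxes). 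The factorization lemmas only let you extend such a simplex so that its image is aligned with the grid lines of $A$; they do not tell you how to attach its nerve to $[A]\cup\bigcup_k[J_k]\cup\dtr_{\leq d-1}[E]$ by pushouts of Segal equivalences. Closing this gap is exactly what the paper's machinery of division pairs and $(K,J)$-fillable subshapes does: one needs the auxiliary shape $\boxdot[i]$ and the nontrivial fact that it is good (\ref{prop.standard-fillable-good}, which itself requires \ref{lemma.max-grids-boxdot}), so that $([\boxdot[i]]\cup[A[i]]\cup\dtr_{\leq d-1}[\square])\hookrightarrow[\square]$ is a Segal equivalence and straddling simplices can be filled one division pair at a time (\ref{thm.division-pair-fillables}, \ref{cor.division-pair-good}); one also needs the closure of fillables under intersection (\ref{prop.fillables-intersections}) to feed \ref{lemma.unions-seq}. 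None of this is present or replaced by an alternative argument in your proposal. A secondary, fixable issue: when you apply \ref{lemma.unions-seq} to the pairs $\bigl(\bigcup_i[E\cap I_i],[E]\bigr)$, the intersection $[E]\cap[E']=[E\cap E']$ of two closed entire subshapes need not again be closed entire (it can be lower-dimensional or fail to contain its bounding box), so the collection as you describe it does not satisfy hypothesis (1) of \ref{lemma.unions-seq} as stated; the paper's \ref{lemma.locally-good} repairs this by enlarging the collection to include $(d-1)$-truncated subshapes.
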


\begin{remark}\label{rem.pasting-theorem-union}
The pasting theorem can also be rephrased as follows. Suppose that $I_1, \dotsc, I_n$ is a covering of a locally composable pasting shape $I$.
Since $\Cat^d(\S)$ is a reflective subcategory of $\fun(\Delta^{\op, \times d}, \S)$, we may compute and reflect the colimit appearing in the statement in the latter category 
so that the statement of the theorem is equivalent to saying that the map
$$
\textstyle \colim_{J \in \mathcal{P}(\{I_1, \dotsc, I_n\})} [J] \rightarrow [I]
$$
of $d$-uple simplicial spaces is a Segal equivalence.

More is true: the colimit on the left may be computed in $\fun(\Delta^{\op, \times d}, \cat{Set})$. The colimit is then given by the ordinary union so that the theorem equivalently asserts that the inclusion
$$
\textstyle \bigcup_{i=1}^n [I_i] \rightarrow [I]
$$
of $d$-uple simplicial sets is a Segal equivalence. To deduce that we can compute the colimit in $\fun(\Delta^{\op, \times d}, \cat{Set})$, we can proceed level-wise and apply similar 
reasoning as in the proof of \ref{prop.pushout-seq}. We have to show that the homotopy colimit (w.r.t.\ the Kan-Quillen model structure on $\mathrm{sSet}$) of the diagram
$$
	\textstyle [Q]_{k_1,\dotsc, k_n} : \mathcal{P}(\{I_1, \dotsc, I_n\}) \rightarrow \mathrm{sSet} : J \mapsto 
		[J]_{k_1,\dotsc, k_n}.
$$
is given by the ordinary colimit for all $k_1,\dotsc, k_n \geq 0$. 

We will do so by making use of the theory of \textit{Reedy model structures} (see \cite[Section 15.1]{Hirsch} or \cite{RiehlVerity}).
Recall that $\mathcal{P}(\{I_1, \dotsc, I_n\})$ is a finite poset, so it comes with a canonical Reedy structure (see \cite[Example 2.3]{RiehlVerity}). So the homotopy colimit of a general 
diagram $F : \mathcal{P}(\{I_1, \dotsc, I_n\}) \rightarrow \mathrm{sSet}$ may be computed as the colimit of $F$ 
if $F$ is Reedy cofibrant.
By definition, the diagram $F$ is Reedy cofibrant if and only if for any  the {latching map} for 
$F$ at each $J \in \mathcal{P}(\{I_1, \dotsc, I_n\})$
$$
	\textstyle \colim_{J' \subsetneq J \in \mathcal{P}(\{I_1, \dotsc, I_n\})} F(J') \rightarrow F(J).
$$
is a monomorphism of simplicial sets. In the case that $F = [Q]_{k_1,\dotsc, k_n}$, the latching map at such $J$ is given by the canonical map 
$$
	\textstyle \bigcup_{J' \subsetneq J \in \mathcal{P}(\{I_1, \dotsc, I_n\})}  [J']_{k_1,\dotsc, k_n} \rightarrow [J]_{k_1,\dotsc, k_n},
$$
which is a monomorphism.
\end{remark}

If $I$ is a composable pasting shape, then the vertebrae of $I$ give a canonical covering of $I$ to which the above theorem applies. 

\begin{definition}
	Let $I$ be a $d$-dimensional composable pasting shape. 
	The \textit{spine} of $[I]$ is the $d$-uple simplicial subset of $[I]$ given by the union 
	$$
	\Sp[I] := \bigcup_{V \subset I \textit{ vertebra}} [V] \subset [I].
	$$
	We will call the resulting inclusion $\Sp[I] \rightarrow [I]$ the \textit{spine inclusion for $I$}.
\end{definition}

\begin{corollary}\label{cor.spine-inclusion}
	The spine inclusion for a composable pasting shape $I$
	is a Segal equivalence.
\end{corollary}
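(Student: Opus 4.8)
The plan is to derive this immediately from the pasting theorem \ref{thm.pasting-theorem}, applied to the canonical covering of $I$ by its vertebrae. First I would record that a composable pasting shape is in particular locally composable (composable shapes are exactly the locally composable, closed $d$-entire ones), so that $I$ has only finitely many boxes and hence only finitely many vertebrae $V_1, \dotsc, V_n$. Moreover, by \ref{ex.coverings}(1) --- itself a consequence of \ref{thm.baby-pasting-theorem} --- the collection $V_1, \dotsc, V_n$ is a covering of $I$ in the sense of the definition of a covering preceding \ref{thm.pasting-theorem}; in particular each $V_i$ is composable and $I = \bigcup_{i=1}^n V_i$.

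Next I would invoke \ref{thm.pasting-theorem} for this covering. It yields that the nerve functor preserves the union $I = \bigcup_{i=1}^n V_i$, i.e.\ that the comparison map
\[
	\textstyle \colim_{J \in \mathcal{P}(\{V_1, \dotsc, V_n\})} [J] \rightarrow [I]
\]
is an equivalence of $d$-uple Segal spaces. By \ref{rem.pasting-theorem-union}, the colimit on the left may be computed in $\fun(\Delta^{\op, \times d}, \cat{Set})$, where it is the ordinary union, so this conclusion is equivalent to the statement that the inclusion $\bigcup_{i=1}^n [V_i] \rightarrow [I]$ of $d$-uple simplicial sets is a Segal equivalence.

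Finally, by the definition of the spine we have $\Sp[I] = \bigcup_{V \subset I \textit{ vertebra}} [V] = \bigcup_{i=1}^n [V_i]$, so the spine inclusion $\Sp[I] \rightarrow [I]$ is precisely the map just shown to be a Segal equivalence, which completes the argument. I do not expect a genuine obstacle at this stage: the substantive content is already carried by \ref{thm.baby-pasting-theorem} (that the vertebrae of a composable shape are composable and recover the shape as their union) and by \ref{thm.pasting-theorem} itself, and the only thing to check here is the routine verification that the vertebra covering satisfies the hypotheses of the pasting theorem, which is exactly \ref{ex.coverings}(1).
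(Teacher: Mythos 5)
Your proposal is correct and follows exactly the route the paper intends: the corollary is stated as an immediate consequence of \ref{thm.pasting-theorem} applied to the vertebra covering from \ref{ex.coverings}, with \ref{rem.pasting-theorem-union} identifying the colimit with the union $\bigcup_i [V_i] = \Sp[I]$. No gaps.
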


\begin{remark}\label{rem.spine-ordinary}
	The spine inclusions defined above generalize the spine inclusions that were introduced in \ref{ssection.cat-objs}. 
	Consider the grid $ \square[n_1, \dotsc, n_d]$ so that $n_1, \dotsc, n_d \neq 0$.  We have 
	exhibited the collection $\mathcal{U}$ of vertebrae of such a standard grid in \ref{ex.trivial-grid}.
	There is a canonical isomorphism of categories
	$$
	\mathbb{G}^{\times d}/([n_1], \dotsc, [n_d]) \rightarrow \mathcal{P}(\mathcal{U})
	$$
	that carries a tuple $(i_a : [k_a] \rightarrow [n_a])$ to the image of 
	the corresponding map $i : \square[k_1, \dotsc, k_d] \rightarrow \square[n_1, \dotsc, n_d]$ of pasting shapes.
	Using the identifications of \ref{cor.cosimp-objs}, we obtain a commutative square
	\[
		\begin{tikzcd}
			\colim_{([k_1],\dotsc, [k_d]) \in \mathbb{G}^{\times d}/([n_1], \dotsc, [n_d])} \Delta[k_1, \dotsc, k_d] \arrow[r, "\simeq"] \arrow[d]& \colim_{J \in \mathcal{P}(\mathcal{U})} [J] \arrow[d] \\
		\Delta[n_1, \dotsc, n_d] \arrow[r, "\simeq"] & {[\square[n_1, \dotsc, n_d]]}
		\end{tikzcd}
	\]
	in $\fun(\Delta^{\op, \times d}, \S)$. The right arrow is precisely the spine inclusion for the grid $\square[n_1,\dotsc, n_d]$ on account of \ref{rem.pasting-theorem-union}. Thus the spine inclusion for the standard grid is a Segal equivalence by definition.
\end{remark}

\begin{example}\label{ex.no-pasting-pinwheel}
	Using a similar argument as in \ref{ex.no-baby-pasting-pinwheel}, one sees that the spine inclusion for $PW$ is not a Segal equivalence. 
	Namely, it factors as 
	$$
	\Sp[PW] = \Sp[PW^\circ] \rightarrow [PW^\circ] \rightarrow [PW].
	$$ 
	The pasting theorem asserts that the first inclusion is a Segal equivalence. By the 2-out-of-3 principle, the spine inclusion 
	for $PW$ is a Segal equivalence precisely when the inclusion $[PW^\circ] \rightarrow [PW]$ is a Segal equivalence. But this 
	cannot be the case since the nerve functor is fully faithful and $PW \neq PW^\circ$.
\end{example}

\begin{corollary}\label{cor.composite-contractible-choice}
	Consider a $d$-dimensional pasting shape $I$ and a $d$-uple Segal space $X$. 
	Suppose that $I$ is a composable shape and that we have a family of maps $$f_V : [V] \rightarrow X, \quad V \subset I \text{ vertebra},$$ which 
	are compatible in the sense that 
	it defines a map $ \Sp[I] \rightarrow X$ 
	of $d$-uple simplicial spaces. Then the space $C$ of composites defined by the pullback square
	\[
		\begin{tikzcd}
			C \arrow[r]\arrow[d] & \map_{\Cat^d(\S)}([I], X) \arrow[d] \\
			\{(f_V)\} \arrow[r] & \map_{\fun(\Delta^{\op, \times d}, \S)}(\Sp[I], X),
		\end{tikzcd}
	\]
	is contractible.
\end{corollary}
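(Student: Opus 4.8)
The plan is to obtain the contractibility of $C$ as a formal consequence of \ref{cor.spine-inclusion}, together with the fact that $\Cat^d(\S)$ is a full subcategory of $\fun(\Delta^{\op,\times d},\S)$.

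First I would rewrite the right-hand vertical map of the defining pullback square as a restriction map along the spine inclusion. Since $[I]$ is a $d$-uple Segal set, it lies in $\Cat^d(\S)$, which is a full subcategory of $\fun(\Delta^{\op,\times d},\S)$; hence there is a natural equivalence $\map_{\Cat^d(\S)}([I],X)\simeq\map_{\fun(\Delta^{\op,\times d},\S)}([I],X)$. Under this identification, the right-hand vertical map becomes the map
$$\map_{\fun(\Delta^{\op,\times d},\S)}([I],X)\longrightarrow\map_{\fun(\Delta^{\op,\times d},\S)}(\Sp[I],X)$$
induced by precomposition with the spine inclusion $\Sp[I]\to[I]$.

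Second, I would invoke \ref{cor.spine-inclusion}, which asserts that this spine inclusion is a Segal equivalence. By the definition of Segal equivalence, precomposition with a Segal equivalence induces an equivalence on mapping spaces into any $d$-uple Segal space; applying this with the $d$-uple Segal space $X$ shows that the displayed restriction map is an equivalence of spaces.

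Finally, $C$ is by construction the pullback of this restriction map along the point $\{(f_V)\}$. As equivalences are stable under pullback, the map $C\to\{(f_V)\}$ is an equivalence, and since its target is contractible, so is $C$. I do not anticipate any genuine obstacle here: all of the real work has already been carried out in the pasting theorem \ref{thm.pasting-theorem} and its corollary \ref{cor.spine-inclusion}. The only minor point requiring care is the passage between the ambient $\infty$-categories $\Cat^d(\S)$ and $\fun(\Delta^{\op,\times d},\S)$ for the mapping space in the upper-right corner, but this is immediate from the full faithfulness of the inclusion.
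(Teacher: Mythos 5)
Your argument is correct and is precisely the (omitted) argument the paper intends: the paper states this corollary without proof as an immediate consequence of \ref{cor.spine-inclusion}, and your three steps --- identifying $\map_{\Cat^d(\S)}([I],X)$ with $\map_{\fun(\Delta^{\op,\times d},\S)}([I],X)$ via full faithfulness of the inclusion (valid since $[I]$ is a $d$-uple Segal set, hence a discrete $d$-uple Segal space), invoking the definition of Segal equivalence applied to the spine inclusion, and pulling back an equivalence along a point --- are exactly what is needed. No gaps.
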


\section{Proof of the pasting theorem}\label{section.proof}

Our strategy for proving the pasting theorem is by proceeding inductively on the dimension of the pasting shapes. We introduce the following auxiliary notion:

\begin{definition}
	We call a $d$-dimensional pasting shape $I$ \textit{good} if the map 
	$$
	\Sp[I] \cup \dtr_{\leq d-1}[I] \rightarrow [I]
	$$
	is a Segal equivalence.
\end{definition}

\begin{remark}\label{rem.generalized-spine}
	We may describe the $d$-uple simplicial subset of $[I]$ appearing in the domain of the map above, alternatively by
	$$
	\Sp[I] \cup \dtr_{\leq d-1}[I] = \bigcup_{\substack{i : \square[1, \dotsc, 1] \rightarrow I \text{ injective},  \\ 
	\im(i) \text{ is contained in a (closed) vertebra of $I$}}} i[\square[1, \dotsc, 1]] \cup \dtr_{\leq d-1}[I].
	$$
\end{remark}

The following theorem is the crucial ingredient that makes the induction work:

\begin{theorem}\label{thm.spine-admittable}
Every $d$-dimensional admittable pasting shape is good.
\end{theorem}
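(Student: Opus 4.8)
The plan is to induct on the \emph{height} $h$ of the admittable pasting shape $I$, using the decomposition machinery developed in the preceding subsection. For the base case $h = 0$, the shape $I$ is a single cell $\square[1,\dotsc,1]$, which is its own unique vertebra, so $\Sp[I] \cup \dtr_{\leq d-1}[I] = [I]$ on the nose and there is nothing to prove. For the inductive step, suppose the statement holds for all admittable pasting shapes of height $\leq h$, and let $I$ be admittable of height $h+1$ with a decomposition $(A, \{J_i\})$, so that $A$ is an open subgrid of $I$, each $J_i$ has height $h$, the intersections $A \cap J_i = \partial J_i$ are mutually distinct open vertebrae of $A$, and $I = A \cup \bigcup_i J_i$. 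The strategy is to compare the inclusion $\Sp[I] \cup \dtr_{\leq d-1}[I] \to [I]$ to the analogous inclusions for the pieces $A$ and $J_i$, which are controlled by the induction hypothesis and by \ref{rem.spine-ordinary} (the grid case), and then glue using \ref{prop.pushout-seq} and \ref{lemma.unions-seq}.

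First I would organize the combinatorics. By \ref{lemma.decomp-fact-prop}, a box of $I$ lies in $A$ precisely when it is degenerate in some coordinate direction in which $A$'s witnessing map is nontrivial, and every box \emph{not} in $A$ admits a factorization through an injective map $g : \square[m_1,\dotsc,m_d] \to I$ with prescribed image; moreover every vertebra of $I$ is a vertebra of some $J_i$ (as noted in the proof of \ref{thm.baby-pasting-theorem}). Consequently $\Sp[I] = \bigcup_i \Sp[J_i]$. Using \ref{rem.generalized-spine}, I would rewrite $\Sp[I] \cup \dtr_{\leq d-1}[I]$ as $\big(\bigcup_i (\Sp[J_i] \cup \dtr_{\leq d-1}[J_i])\big) \cup \dtr_{\leq d-1}[I]$, where the extra $(d-1)$-truncation absorbs the boundary data of $A$ and the overlaps, since $A$ has no non-degenerate $d$-boxes and hence $[A] = \Sp[A] \cup \dtr_{\leq d-1}[A]$ already reduces to grid-spine data handled by \ref{rem.spine-ordinary} together with \ref{prop.nerve-cosegal}. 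The key set-theoretic claims to verify are: (i) $[I] = \big(\bigcup_i [J_i]\big) \cup \dtr_{\leq d-1}[I]$, which follows from the factorization lemma because any $d$-simplex of $[I]$ classifying a non-degenerate $d$-box factors through some $[J_i]$; and (ii) the pairwise intersections $[J_i] \cap [J_{i'}]$, and $[J_i] \cap \dtr_{\leq d-1}[I]$, are themselves truncations or entire-subshape nerves to which the induction hypothesis (via \ref{prop.closed-entire-admittable}) applies. I would package these into a collection $S$ of pairs $(A',B')$ of $d$-uple simplicial subsets of $[I]$ — roughly the pairs $(\Sp[J_i] \cup \dtr_{\leq d-1}[J_i],\, [J_i])$ and $(\dtr_{\leq d-1}[I],\, \dtr_{\leq d-1}[I])$ and their intersections — check the closure condition (1) of \ref{lemma.unions-seq} using the admittability of closed entire subshapes, check condition (2) using the induction hypothesis on each $J_i$ plus the grid case for the truncation/hyperplane pieces via \ref{prop.desc-trunc}, and conclude that $\bigcup$ of the domains maps by a Segal equivalence to $\bigcup$ of the codomains, which is exactly the desired map for $I$.

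The main obstacle I anticipate is the bookkeeping of the intersections — specifically, showing that $[J_i] \cap \dtr_{\leq d-1}[I]$ and $[J_i] \cap [J_{i'}]$ are exactly of the form needed for \ref{lemma.unions-seq}, i.e.\ that they are nerves of closed entire (hence admittable, by \ref{prop.closed-entire-admittable}) subshapes or their truncations, \emph{and} that the induced inclusions into the $[J_i]$ restrict the already-known good maps. The subtlety is that two distinct $J_i$ may only meet inside the open grid $A$ (where they share boundary cells), so $[J_i] \cap [J_{i'}]$ should land inside $\dtr_{\leq d-1}[I]$ and contribute nothing new; making this precise requires invoking \ref{prop.div-pair-boxes}(1) to control which boxes can simultaneously satisfy the corner constraints of $\partial J_i$ and $\partial J_{i'}$. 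A secondary technical point is ensuring that the extra $\dtr_{\leq d-1}[I]$ term genuinely absorbs $[A]$ and all the boundary overlap, so that one is not forced to prove goodness of $A$ itself separately beyond what \ref{rem.spine-ordinary} already gives; here the identity $[A] = \Sp[A] \cup \dtr_{\leq d-1}[A]$ (valid since $A$ is a grid with no non-degenerate top box) plus \ref{prop.desc-trunc} should suffice. Once these intersection computations are pinned down, the conclusion is a formal consequence of \ref{lemma.unions-seq}.
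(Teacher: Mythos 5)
Your induction on height and the reduction of $\Sp[I]\cup\dtr_{\leq d-1}[I]$ to the pieces $\Sp[J_i]\cup\dtr_{\leq d-1}[J_i]$ match the paper's overall strategy, but your key set-theoretic claim (i), namely $[I]=\bigl(\bigcup_i[J_i]\bigr)\cup\dtr_{\leq d-1}[I]$, is false, and this is exactly where the real difficulty of the theorem lives. The decomposition $I=A\cup\bigcup_i J_i$ is a union of \emph{pasting shapes}, i.e.\ the closure of the set-theoretic union under joins; a non-degenerate $d$-box of $I$ obtained by joining boxes lying in different $J_i$'s across the grid $A$ is contained in no single $J_i$. Concretely, in \ref{ex.decomp} the box $((1,1),(3,3))$ is a non-degenerate $2$-box of $I$ spanning four cells of $A$ and lies in none of the $J_i$. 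What \ref{lemma.decomp-fact-prop} gives for such a box is a factorization through an injective map $g:\square[m_1,\dotsc,m_d]\rightarrow I$ whose image records the grid lines of $A$ it crosses --- not a factorization through some $[J_i]$. So after your application of \ref{lemma.unions-seq} you are still left with the inclusion $\bigcup_i[J_i]\cup\dtr_{\leq d-1}[I]\rightarrow[I]$, which is not the identity and which your argument never addresses.

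Closing that gap is precisely what the paper's machinery of division pairs and $(K,J)$-fillable shapes is for. The paper filters $I$ by $I_k=A\cup\bigcup_{i\leq k}J_i$ and at each stage must show that the inclusion $[I_{k-1}]\cup[J_k]\cup\dtr_{\leq d-1}[I_k]\rightarrow[I_k]$ of \ref{prop.division-pair-spine} is a Segal equivalence; the missing simplices are adjoined by pushouts along the prototype Segal equivalence $[\boxdot[i]]\cup[A[i]]\cup\dtr_{\leq d-1}[\square]\rightarrow[\square]$ of \ref{ex.division-pair-boxdot}, organized via \ref{thm.division-pair-fillables} and \ref{cor.division-pair-good}, with \ref{lemma.decomp-fact-prop} supplying the required extensions of simplices to ones that are fillable. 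Without an argument of this kind, or some substitute for it, the proposal does not prove the theorem.
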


The proof of this theorem is deferred to the end of this section. Instead, we will now demonstrate how the pasting theorem follows from this fact. We distill two 
technical steps in this demonstration as lemmas. They will also be of use later.

\begin{lemma}\label{lemma.locally-good}
	Suppose that $I$ is a $d$-dimensional pasting shape so that any closed entire subshape of $I$ 
	is good.
	If $I_1, \dotsc, I_n$ is a collection 
	of subshapes of $I$ 
	with the property that any closed entire subshape of $I_i$ is an entire subshape of $I$, then the inclusion
	$$
	\textstyle \bigcup_i\Sp[I_i] \cup \dtr_{\leq d-1}[I_i] \rightarrow \bigcup_i [I_i]
	$$
	of $d$-uple Segal sets is a Segal equivalence.
\end{lemma}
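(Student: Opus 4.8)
The plan is to deduce the statement from \ref{lemma.unions-seq}, applied to a carefully chosen collection $S$ of pairs of $d$-uple simplicial subsets of $[I]$ that is rich enough to realize both the source and the target of the map as finite unions of the $A$'s and $B$'s of its members, while being manifestly closed under the intersection operation in condition~(1) of that proposition. The obvious choice, indexed by closed entire subshapes, fails: intersections of ``rectangular'' entire subshapes need not be entire (already for $d=2$ one can obtain an $L$-shaped union of two edges as such an intersection). So instead I would index $S$ by \emph{blocks}: for a subshape $J\subseteq I$ and a $(d,d)$-box $(u,v)$ let $J[u,v]$ be the subshape of all boxes $(x,y)$ of $J$ with $u_a\leq x_a,y_a\leq v_a$ for all $a$. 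The blocks of the $I_i$'s are closed under intersection, since $J[u,v]\cap J'[u',v'] = (J\cap J')[\max(u,u'),\min(v,v')]$; to exploit this I would first replace $I_1,\dotsc,I_n$ by the finite family $\mathcal{P}(\{I_i\})$ of their non-empty intersections. This does not change either union, and it still satisfies the standing hypothesis: if $E$ is a closed entire subshape of $\bigcap_{i\in T}I_i$ with (tight) bounding box $(\alpha,\omega)$, then $(\alpha,\omega)$ lies in each $I_i$, so $I_i\cap[\alpha,\omega]$ is a closed entire subshape of $I_i$, hence entire in $I$ by hypothesis, and one concludes that every box of $I$ inside $[\alpha,\omega]$ lies in each $I_i$, hence in $E$.

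Taking $X=[I]$ and $S=\{(\Sp[F]\cup\dtr_{\leq d-1}[F],\,[F]) : F\text{ a block of some }I_i\}\cup\{(\emptyset,\emptyset)\}$, I would verify the hypotheses of \ref{lemma.unions-seq}. Condition~(1) reduces to: blocks are closed under intersection (just noted); $[F]\cap[F']=[F\cap F']$; and $(\Sp[F]\cup\dtr_{\leq d-1}[F])\cap[F']\subseteq\Sp[F\cap F']\cup\dtr_{\leq d-1}[F\cap F']$. The truncation part is formal, and for the spine part one writes $\Sp[F]\cap[F']=\bigcup_{V}[V\cap F']$ over the vertebrae $V$ of $F$; each $V\cap F'$ is a block of $I_i$ contained in $F\cap F'$, and either $V\subseteq F'$, in which case $V$ is still a vertebra of $F\cap F'$, or $V\cap F'$ is a proper sub-block of the cell $V$ and hence $(d-1)$-truncated, since a vertebra carries at most one non-degenerate $d$-box. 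In both cases $[V\cap F']\subseteq\Sp[F\cap F']\cup\dtr_{\leq d-1}[F\cap F']$. Since each $I_i$ is itself a block, the pairs $(\Sp[I_i]\cup\dtr_{\leq d-1}[I_i],[I_i])$ all lie in $S$, so once condition~(2) is established \ref{lemma.unions-seq} yields exactly the desired Segal equivalence.

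Condition~(2) — that every block $F$ is \emph{good}, i.e.\ $\Sp[F]\cup\dtr_{\leq d-1}[F]\to[F]$ is a Segal equivalence — is the substantive point, and I would prove it by strong induction on the number of non-degenerate $d$-boxes of $F$. If $F$ is entire, then, tightening its bounding box, $F=I_i\cap[\alpha,\omega]$ with $(\alpha,\omega)\in F\subseteq I_i$, so $F$ is a closed entire subshape of $I_i$, hence entire in $I$ by hypothesis, hence good by the standing hypothesis. If $F$ is not entire, then every simplex of $[F]$ factors through a closed entire subshape of $F$, which is necessarily a \emph{proper} sub-block of $F$; moreover $\Sp[F]\cup\dtr_{\leq d-1}[F]$ is the union of $\Sp[F']\cup\dtr_{\leq d-1}[F']$ over proper sub-blocks $F'$ of $F$ (here one uses that an open vertebra is $(d-1)$-truncated, so its nerve already lies in $\dtr_{\leq d-1}[F]$, and that vertebrae of sub-blocks of $F$ are vertebrae of $F$). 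Applying \ref{lemma.unions-seq} once more to the pairs attached to the proper sub-blocks of $F$ — condition~(1) being checked exactly as above and condition~(2) holding by the inductive hypothesis — gives that $F$ is good.

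The main obstacle is orchestrating this nested induction and, within it, the bookkeeping behind condition~(1): one must track precisely how vertebrae, cells, open/closed entireness and dimension truncation behave under the (well-behaved) operation of intersecting blocks, the crucial elementary inputs being that a vertebra carries at most one non-degenerate top-dimensional box and that an open vertebra is $(d-1)$-truncated. Everything else is a formal consequence of \ref{prop.pushout-seq} and \ref{lemma.unions-seq} together with the observation that the nerve of any pasting shape is the union of the nerves of its closed entire subshapes.
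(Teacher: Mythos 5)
Your strategy is viable and in fact runs parallel to the paper's own proof: both are two nested applications of \ref{lemma.unions-seq}, with your blocks of the intersection-closed family $\mathcal{P}(\{I_i\})$ playing the role of the paper's \emph{total} subshapes (subshapes all of whose closed entire subshapes are entire in $I$), and with goodness ultimately imported from the hypothesis on closed entire subshapes of $I$. But as written there are two genuine gaps. The first is in your verification of condition~(1) of \ref{lemma.unions-seq}. For pairs $(A,B)$, $(A',B')$ attached to blocks $F$, $F'$, that condition demands $A\cap B'\subset A'$ \emph{and} that $(A\cap B',B\cap B')$ itself belong to $S$; since your $S$ attaches a single pair to each block, the latter forces the \emph{equality} $(\Sp[F]\cup\dtr_{\leq d-1}[F])\cap[F']=\Sp[F\cap F']\cup\dtr_{\leq d-1}[F\cap F']$, whereas you only prove the inclusion ``$\subseteq$'', which is the wrong (and weaker) direction. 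The missing inclusions can genuinely fail for arbitrary subshapes: a closed vertebra of $F\cap F'$ need not lie in any closed vertebra of $F$ or of $F'$, because $F$ may carry a non-degenerate $d$-box strictly inside that vertebra's bounding interval which $F'$ does not see. This is exactly where the hypothesis must enter: a closed vertebra $W=(F\cap F')[\beta,\gamma]$ is a closed entire subshape of $I_i\cap I_j$, hence entire in $I$, hence equal to $I[\beta,\gamma]$ with a unique non-degenerate $d$-box, and therefore $F[\beta,\gamma]=F'[\beta,\gamma]=W$ is a closed vertebra of both $F$ and $F'$. Without this step condition~(1) is not established. (Relatedly, your dichotomy for $V\cap F'$ is not exhaustive — one can have $V\not\subseteq F'$ while the bounding box of $V$ lies in $F'$, since vertebrae need not be cells — although the conclusion survives by the same argument.)

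The second gap is the induction measure: a proper sub-block $F[u',v']\subsetneq F$ can contain \emph{every} non-degenerate $d$-box of $F$ (shrinking the interval may only delete lower-dimensional boxes), so ``strictly fewer non-degenerate $d$-boxes'' is not available for all proper sub-blocks, which is what condition~(2) of your inner application requires. Inducting on the total number of boxes repairs this, since blocks live in a bounded interval and are therefore finite (note, though, that declaring $I_i$ to be a block of itself presumes $I_i$ is bounded, a finiteness point worth making explicit). For comparison, the paper sidesteps the induction entirely: for a total subshape $J$ it rewrites the map in question as a union, over the closed entire subshapes $E_1,\dotsc,E_m$ of $J$, of the corresponding maps for the $E_i$, pushed out along $\bigcup_i\dtr_{\leq d-1}[E_i]\to\dtr_{\leq d-1}[I]$, and then applies \ref{lemma.unions-seq} to the collection indexed by closed entire subshapes of $I$ together with $(d-1)$-truncated subshapes, for which condition~(2) is precisely the standing hypothesis.
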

\begin{proof}
	During this proof, we will call a subshape $J$ of $I$ \textit{total} if it has the property that any closed entire subshape of $J$ is again an entire subshape of $I$. 
	We consider the set $S$ of pairs of $d$-uple simplicial subsets of $[I]$ 
	$$
	(\Sp[J] \cup \dtr_{\leq d-1}[J], [J])
	$$
	where $J$ is $(d-1)$-truncated or a total subshape of $I$. One readily verifies that $S$ meets condition (1) of \ref{lemma.unions-seq}, 
	so that we can reduce to showing that the inclusion $$
	i : \Sp[J] \cup \dtr_{\leq d-1}[J] \rightarrow [J],
	$$
	is a Segal equivalence for $J$ $(d-1)$-truncated or total. If $J$ is $(d-1)$-truncated, then this inclusion is the identity. 
	Hence, we may assume that $J$ is total.
	Let $E_1, \dotsc, E_m$ be all closed entire subshapes of $J$. Then the inclusion $i$ is precisely the inclusion
	\[
		\textstyle	i : \bigcup_{i=1}^m \Sp[E_i] \cup \dtr_{\leq d-1}[I] \rightarrow \bigcup_{i=1}^m [E_i] \cup \dtr_{\leq d-1}[I].
	\]
	This inclusion is induced by the map
	\[
		\textstyle	j : \bigcup_{i=1}^m \Sp[E_i] \cup \dtr_{\leq d-1}[E_i] \rightarrow \bigcup_{i=1}^m [E_i] \cup \dtr_{\leq d-1}[E_i]
	\]
	under taking pushouts along the inclusion $\bigcup_{i=1}^m \dtr_{\leq d-1}[E_i] \rightarrow \dtr_{\leq d-1}[I]$, hence it suffices to show that 
	$j$ is a Segal equivalence. 
	We can make use of \ref{lemma.unions-seq} once again to verify this. To this end, consider the subset $S' \subset S$ which 
	now contains the pairs associated to $(d-1)$-truncated or closed entire subshapes of $I$. Then $S'$ again meets 
	condition (1) of \ref{lemma.unions-seq}. By assumption, $S'$ also meets (2), so that it follows that $j$ is a Segal equivalence.
\end{proof}

\begin{lemma}\label{lemma.pasting-theorem}
	Fix an integer $0 \leq d' \leq d$.
	Suppose that $I$ is a $d$-dimensional pasting shape that has the property that any closed $k$-entire subshape of $I$ with $k \leq d'$, viewed as a $k$-dimensional pasting shape,
	is good.
	If $I_1, \dotsc, I_n$ is a cover of $I$, then the inclusion
	$$
	\textstyle \bigcup_i \dtr_{\leq k}[I_i] \rightarrow \dtr_{\leq k}[I]
	$$
	of $d$-uple Segal sets is a Segal equivalence for every $0 \leq k \leq d'$.
\end{lemma}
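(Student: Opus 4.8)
The plan is to argue by induction on $k$, keeping $d'$ and the covering $I_1, \dotsc, I_n$ fixed. The base case $k = 0$ is immediate: the map $\bigcup_i \dtr_{\leq 0}[I_i] \rightarrow \dtr_{\leq 0}[I]$ is the inclusion of the discrete $d$-uple simplicial set on $\bigcup_i V(I_i)$ into the one on $V(I)$, and these vertex sets coincide by condition (2) of a covering applied with $k = 0$, so the map is the identity. For the inductive step I fix $1 \leq k \leq d'$, assume the statement at level $k-1$ for the same covering, and first record the decomposition
$$
\dtr_{\leq k}[J] = \dtr_{\leq k-1}[J] \cup \bigcup_{E} [E],
$$
valid for any $d$-dimensional pasting shape $J$, where $E$ ranges over the closed $k$-entire subshapes of $J$; this holds because a $d$-uple simplex of $[J]$ of dimension exactly $k$ factors through the closed $k$-entire subshape generated by the image of its nondegenerate top box, by the same reasoning used for $d$-shaping maps. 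Guided by this, I would introduce the \emph{$k$-partial spine}
$$
P_k[J] := \dtr_{\leq k-1}[J] \cup \bigcup_{E} \Sp[E] \;\subseteq\; \dtr_{\leq k}[J],
$$
again with $E$ ranging over closed $k$-entire subshapes of $J$ and $\Sp[E] = \bigcup_{V \subseteq E \text{ a } k\text{-vertebra}}[V]$ formed by viewing $E$ as a $k$-dimensional pasting shape via the embedding of \ref{rem.hyperplane-embedding}.

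The lemma then reduces to showing that the two inclusions
$$
\textstyle \bigcup_i P_k[I_i] \longrightarrow \bigcup_i \dtr_{\leq k}[I_i] \qquad\text{and}\qquad \bigcup_i P_k[I_i] \longrightarrow \dtr_{\leq k}[I]
$$
are Segal equivalences: the second inclusion factors through the first followed by $\bigcup_i \dtr_{\leq k}[I_i] \to \dtr_{\leq k}[I]$, so the $2$-out-of-$3$ principle for Segal equivalences then yields the claim. For the first inclusion I would use that $\bigcup_i \dtr_{\leq k}[I_i]$ is obtained from $\bigcup_i P_k[I_i]$ by adjoining the $[E]$ over all $E$ that are closed $k$-entire in some $I_i$, glued along the intersection $\bigcup_E (\dtr_{\leq k-1}[E] \cup \Sp[E])$; each such $E$ is also closed $k$-entire in $I$ by condition (1) of a covering, so after checking that the relevant union is a genuine pushout, \ref{prop.pushout-seq} reduces me to proving that $\bigcup_E (\dtr_{\leq k-1}[E] \cup \Sp[E]) \rightarrow \bigcup_E [E]$ is a Segal equivalence. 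This I deduce from \ref{lemma.unions-seq}, applied to the collection $S$ consisting of the pairs $(\dtr_{\leq k-1}[E'] \cup \Sp[E'], [E'])$ for $E'$ a closed $k$-entire subshape of $I$, together with the trivial pairs $([E'], [E'])$ for $E'$ a $(k-1)$-truncated subshape of $I$: condition (2) of \ref{lemma.unions-seq} for these pairs is exactly the goodness hypothesis on the closed $k$-entire subshapes of $I$ (and is trivial for the truncated pairs), while condition (1) is the combinatorial input discussed below.

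For the second inclusion I would factor it as $\bigcup_i P_k[I_i] \rightarrow P_k[I] \rightarrow \dtr_{\leq k}[I]$. The right-hand map $P_k[I] \to \dtr_{\leq k}[I]$ is handled exactly as the first inclusion above, with the single shape $I$ in place of the family $\{I_i\}$ and the same collection $S$. For the left-hand map, conditions (1) and (2) of a covering together imply that the closed $k$-vertebrae of the $I_i$ are precisely the closed $k$-vertebrae of $I$ (using that a closed $k$-entire subshape is determined by its bounding box), whence $\bigcup_i P_k[I_i] = \bigcup_i \dtr_{\leq k-1}[I_i] \cup \bigcup_{V \subseteq I \text{ a } k\text{-vertebra}}[V]$ and $P_k[I] = \dtr_{\leq k-1}[I] \cup \bigcup_{V}[V]$. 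Consequently $\bigcup_i P_k[I_i] \to P_k[I]$ sits in a pushout square along the inclusion $\bigcup_i \dtr_{\leq k-1}[I_i] \to \dtr_{\leq k-1}[I]$, which is a Segal equivalence by the induction hypothesis, so one more application of \ref{prop.pushout-seq} finishes the case.

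The main obstacle will be the combinatorial bookkeeping needed to apply \ref{lemma.unions-seq}, namely verifying condition (1) for the collection $S$. This comes down to: that $[-]$ carries intersections of subshapes to intersections of $d$-uple simplicial subsets; that a $k$-vertebra of a closed $k$-entire subshape $E'_1$ meets another such $E'_2$ either in a $k$-vertebra of $E'_1 \cap E'_2$ or in a $(k-1)$-truncated shape; and, crucially, that the intersection of two closed $k$-entire subshapes lying in different $k$-hyperplanes is $(k-1)$-truncated. It is precisely this last point that forces the trivial $(k-1)$-truncated pairs into $S$, mirroring the proof of \ref{lemma.locally-good}. Once these identities, together with the analogous ones needed to recognize the unions above as honest pushouts, are in place, everything else is a formal consequence of \ref{prop.pushout-seq}, \ref{lemma.unions-seq}, and the induction hypothesis.
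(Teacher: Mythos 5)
Your proposal follows the paper's inductive skeleton --- induction on $k$, peeling off the $(k-1)$-truncated part via a pushout of the inductive hypothesis, and reducing everything to \ref{prop.pushout-seq} and \ref{lemma.unions-seq} applied to the goodness of closed $k$-entire subshapes --- but the middle of the argument is genuinely different. The paper decomposes $\dtr_{\leq k}[I]$ over the $k$-dimensional hyperplanes via \ref{prop.desc-trunc}, pushes each slice $I\cap H$ through the embedding of \ref{rem.hyperplane-embedding} to obtain a statement about $k$-uple simplicial sets, and then quotes \ref{lemma.locally-good} for the covering part; you instead decompose over closed $k$-entire subshapes directly, introduce the partial spines $P_k$, and in effect re-derive the content of \ref{lemma.locally-good} inline. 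Your route is more transparent about where the goodness hypothesis enters, at the cost of redoing the intersection bookkeeping that \ref{lemma.locally-good} packages (your collection $S$ padded with the trivial $(k-1)$-truncated pairs is exactly the device used there, so that part is sound, and the combinatorial verifications you flag are of the same order as what the paper itself leaves to the reader).

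The one place where something is actually missing is your claim that condition (2) of \ref{lemma.unions-seq} for the pairs $(\dtr_{\leq k-1}[E']\cup\Sp[E'],[E'])$ is \emph{exactly} the goodness hypothesis. It is not quite: goodness of $E'$, viewed as a $k$-dimensional pasting shape, is by definition a statement about an inclusion of $k$-uple simplicial sets, whereas your pairs live inside the $d$-uple simplicial set $[I]$. To transport the Segal equivalence across this dimension shift you must observe that $[E']\subset[I]$ is (a translate of) $p_h^*$ applied to the $k$-uple nerve of $E'$ (\ref{rem.hyperplane-embedding}) and that $p_h^*$ preserves Segal equivalences because its right adjoint preserves iterated Segal spaces (\ref{prop.cat-k-d-incl}). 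This is precisely what the paper's hyperplane detour is buying; without it your appeal to the hypothesis does not literally typecheck. It is a one-line fix, but it needs to be said, and it applies each time you invoke goodness in both of your two inclusions.
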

\begin{proof}
	Throughout, let us write 
	$$
	\textstyle j : \bigcup_i [I_i] \rightarrow [I]
	$$ 
	for the inclusion.
	We will show that each truncated inclusion $\dtr_{\leq k}j$ is a Segal equivalence by induction on $0 \leq k \leq d'$. For $k = 0$, the statement is clear. Suppose that the statement holds for $k-1 < d'$. 
	Then we may consider the factorization 
	$$
	\textstyle \dtr_{\leq k}\bigcup_i[I_i] \rightarrow \dtr_{\leq k}\bigcup_i[I_i] \cup \dtr_{\leq k-1}[I] \xrightarrow{j_k} \dtr_{\leq k}[I]
	$$ 
	of the truncated inclusion $\dtr_{\leq k}j$. Note that the map 
	on the left is a pushout of the inclusion map $\dtr_{\leq k -1}j$, which is a Segal equivalence by assumption.
	Hence, this map is again a Segal equivalence.
	In light of the 2-out-of-3 principle, the truncation $\dtr_{\leq k} j$ is a Segal equivalence precisely if the inclusion $j_k$ is a Segal equivalence.

	In order to prove that $j_k$ is a Segal equivalence, we will exploit \ref{prop.desc-trunc} which asserts that $j_k$ can be written as a union of inclusions
	$$
	\textstyle \bigcup_{H \in \mathscr{H}_k} \bigcup_i {[I_i \cap H]} \cup \dtr_{\leq k-1}[I]\rightarrow \bigcup_{H \in \mathscr{H}_k}[I \cap H].
	$$
	Hence, \ref{lemma.unions-seq} implies that it suffices to verify that for any $H\in \mathscr{H}_k$, the inclusion 
	$$
	\textstyle \bigcup_i {[I_i \cap H]} \cup \dtr_{\leq k-1}[I] \rightarrow [I\cap H] \cup \dtr_{\leq k-1}[I]
	$$
	is an equivalence.
	In turn, this map is induced by the map 
	$$
	\textstyle j_k^H : \bigcup_i {[I_i \cap H]} \cup \dtr_{\leq k-1}[I \cap H] \rightarrow [I\cap H],
	$$
	under taking pushouts along $\dtr_{\leq k-1}[I\cap H] \rightarrow \dtr_{\leq k-1}[I]$, so that 
	it is enough to demonstrate that $j_k^H$ is an equivalence.

	To this end, we note that the slices $I_0 \cap H, \dotsc, I_n \cap H \subset I\cap H$ correspond to $k$-dimensional pasting shapes $J_0, \dotsc, J_n \subset J$ 
	under the embedding $i_h$ of \ref{rem.hyperplane-embedding} for a suitable choice of $k$-tuple $h$. It readily follows from the definitions that $J_0, \dotsc, J_n$ again form a covering for 
	$J$. The compatibility of $i_h$ with the nerve functor (see \ref{rem.hyperplane-embedding}), 
	allows us to identify $j_k^H$ with the image of the inclusion
	$$
	\textstyle \bigcup_i {[J_i]} \cup \dtr_{\leq k-1}[J] \rightarrow [J]
	$$
	of $k$-uple simplicial sets under the functor $p_h^*$ (defined in \ref{ssection.cat-objs}). 
	The right adjoint of $p_h^*$ preserves iterated Segal spaces on account of \ref{prop.cat-k-d-incl}.
	Hence, the functor $p_h^*$ will preserve Segal equivalences. Thus it suffices to show that 
	the above inclusion of $k$-uple simplicial sets is a Segal equivalence. 
	This fits in a commutative square of inclusions
	\[
		\begin{tikzcd}
			\bigcup_i {\Sp[J_i]} \cup \dtr_{\leq k-1}[J] \arrow[r]\arrow[d] & \bigcup_i {[J_i]} \cup \dtr_{\leq k-1}[J] \arrow[d] \\
			\Sp[J] \cup \dtr_{\leq k-1}[J]  \arrow[r] & {[J]}.
		\end{tikzcd}
	\]
	The left vertical map must be the identity because $J_0, \dotsc, J_n$ cover $J$. Moreover, \ref{thm.spine-admittable} implies that the bottom map is a Segal equivalence. 
	The top map is induced by the inclusion 
	$$
	\textstyle \bigcup_i {\Sp[J_i]} \cup \dtr_{\leq k-1}[J_i] \rightarrow \bigcup_i {[J_i]}
	$$
	under taking pushouts along the map $\bigcup_i \dtr_{\leq k-1}[J_i] \rightarrow [J]$. In light of our assumptions, 
	this inclusion will be an equivalence by \ref{lemma.locally-good},
	so the desired result now follows from the 2-out-of-3 principle. 
\end{proof}

We are now ready to give a proof of \ref{thm.pasting-theorem} assuming that \ref{thm.spine-admittable} holds:

\begin{proof}[Proof of \ref{thm.pasting-theorem}]
	The fact that $I$ decomposes as $I = \bigcup_i I_i$ follows directly from \ref{cor.baby-pasting-theorem-lc} and the definition of a covering. One can retrieve 
	the main result 
	from the observations in \ref{rem.pasting-theorem-union} and \ref{lemma.pasting-theorem}.
\end{proof}

\subsection{Prototypical good pasting shapes}

It remains to prove \ref{thm.spine-admittable}. We will exhibit a few prototypical examples of good pasting shapes.

\begin{proposition}\label{prop.stgrid-good}
	The standard grids $\square[n_1, \dotsc, n_d]$ are good.
\end{proposition}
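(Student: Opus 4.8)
The plan is to bootstrap from \ref{rem.spine-ordinary}, which records — independently of the pasting theorem, since it only uses the Reedy-cofibrancy computation and the generating spine inclusions $i_{n_1,\dotsc,n_d}$ of \ref{ssection.cat-objs} — that the spine inclusion $\Sp[\square[n_1,\dotsc,n_d]] \to [\square[n_1,\dotsc,n_d]]$ of any standard grid, in any dimension and with positive parameters, is a Segal equivalence. Abbreviate $\square := \square[n_1,\dotsc,n_d]$ and factor this equivalence as
$$
\Sp[\square] \xrightarrow{\;a\;} \Sp[\square] \cup \dtr_{\leq d-1}[\square] \xrightarrow{\;b\;} [\square],
$$
where $b$ is precisely the map whose being a Segal equivalence asserts that $\square$ is good. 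By the $2$-out-of-$3$ property, it then suffices to prove that $a$ is a Segal equivalence.

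For this I would exhibit $a$ as the pushout of the inclusion $\Sp[\square] \cap \dtr_{\leq d-1}[\square] \to \dtr_{\leq d-1}[\square]$ along $\Sp[\square] \cap \dtr_{\leq d-1}[\square] \to \Sp[\square]$, so that \ref{prop.pushout-seq} reduces the claim to showing that this inclusion is a Segal equivalence. The combinatorial input enters here: applying \ref{prop.desc-trunc} to $\square$ and separately to each vertebra (= unit cell) $V$ of $\square$ — whose own truncation $\dtr_{\leq d-1}[V]$ is the union of the nerves of the unit cells of the hyperplane slices $\square \cap H$ meeting $V$ — and distributing intersections over unions, one identifies the inclusion above with
$$
\textstyle \bigcup_{H \in \mathscr{H}_{d-1}} \Sp_H \longrightarrow \bigcup_{H \in \mathscr{H}_{d-1}} [\square \cap H],
$$
where $\Sp_H \subseteq [\square \cap H]$ denotes the union of the nerves of the unit cells of $\square \cap H$. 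Since each $\square \cap H$ is, via \ref{rem.hyperplane-embedding}, a standard $(d-1)$-grid with positive parameters and $\Sp_H$ is the $p_h^*$-image of its spine, the map $\Sp_H \to [\square \cap H]$ is a Segal equivalence: $p_h^*$ carries Segal equivalences to Segal equivalences by \ref{prop.cat-k-d-incl} (its right adjoint preserves iterated Segal spaces), and the $(d-1)$-dimensional spine inclusion is a Segal equivalence by the first paragraph.

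To promote these pointwise equivalences to the displayed union I would apply \ref{lemma.unions-seq} to the collection $S$ consisting of $(\emptyset, \emptyset)$ together with all pairs $(\Sp_{\square \cap P}, [\square \cap P])$, where $P$ runs over the non-empty intersections of hyperplanes in $\mathscr{H}_{d-1}$ and $\Sp_{\square \cap P}$ is the union of the nerves of the unit cells of the standard grid $\square \cap P$. Condition (2) of the lemma holds exactly as above. Condition (1) reduces to the identities $[\square \cap P] \cap [\square \cap P'] = [\square \cap (P \cap P')]$ (as $[-]$ preserves intersections) and $\Sp_{\square \cap P} \cap [\square \cap P'] = \Sp_{\square \cap (P \cap P')} \subseteq \Sp_{\square \cap P'}$, which hold because a unit cell of $\square \cap P$ meeting $\square \cap P'$ intersects it in a unit cell of $\square \cap (P \cap P')$, every such unit cell arises this way, and it is contained in a unit cell of $\square \cap P'$ (with $\Sp_\emptyset = \emptyset$ when $P \cap P' = \emptyset$). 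Feeding the finitely many pairs indexed by $H \in \mathscr{H}_{d-1}$ into \ref{lemma.unions-seq} then yields the Segal equivalence we need, and hence the proposition.

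I do not expect a genuine obstacle, only one point demanding care: the bookkeeping of the second paragraph, namely the identification of $\Sp[\square] \cap \dtr_{\leq d-1}[\square]$ with $\bigcup_H \Sp_H$, and the reminder that $\Sp_H$ must be understood as the spine of $\square \cap H$ \emph{viewed as a $(d-1)$-dimensional grid} — the $p_h^*$-image of a $(d-1)$-uple spine inclusion — and not as the $d$-dimensional spine of $\square \cap H$, which is empty because $\square \cap H$ has no non-degenerate $d$-box. Beyond that, the argument is a formal consequence of \ref{rem.spine-ordinary}, \ref{prop.pushout-seq}, \ref{lemma.unions-seq} and \ref{prop.desc-trunc}.
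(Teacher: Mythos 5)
Your argument is correct, and its skeleton coincides with the paper's: both factor the spine inclusion as $\Sp[\square]\rightarrow\Sp[\square]\cup\dtr_{\leq d-1}[\square]\rightarrow[\square]$, cite \ref{rem.spine-ordinary} for the composite, and reduce via the $2$-out-of-$3$ property and \ref{prop.pushout-seq} to showing that $\dtr_{\leq d-1}\Sp[\square]\rightarrow\dtr_{\leq d-1}[\square]$ is a Segal equivalence. You part ways at this last step. The paper runs an induction on $d$ and applies \ref{lemma.pasting-theorem} (with $d'=d-1$ and the covering of $\square$ by its vertebrae), the hypotheses being met because every closed $k$-entire subshape of $\square$ is a standard $k$-grid and hence good by the induction hypothesis; since \ref{lemma.pasting-theorem} itself invokes \ref{thm.spine-admittable} in dimensions $\leq d-1$, this places the proof inside the joint induction on dimension organizing \ref{section.proof}. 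You instead unwind the hyperplane decomposition of \ref{prop.desc-trunc} by hand: each slice contributes the $p_h^*$-image of the $(d-1)$-dimensional spine inclusion of a standard grid, which is a generating Segal equivalence preserved by $p_h^*$ thanks to \ref{prop.cat-k-d-incl}, and the pieces are glued with \ref{lemma.unions-seq} over intersections of hyperplanes. This buys you a non-inductive and logically lighter proof that touches neither \ref{lemma.pasting-theorem} nor \ref{thm.spine-admittable}; what the paper's version buys is brevity, since \ref{lemma.pasting-theorem} is needed anyway and already packages exactly the bookkeeping (hyperplane slices, $p_h^*$, \ref{lemma.unions-seq}) that you carry out explicitly. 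The two points you flag as delicate, namely the identification of $\Sp[\square]\cap\dtr_{\leq d-1}[\square]$ with $\bigcup_H \Sp_H$ and the reading of $\Sp_H$ as a $(d-1)$-dimensional spine rather than the (empty) $d$-dimensional one, are indeed the only places requiring care, and your treatment of both is sound.
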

\begin{proof}
	We proceed by induction on the dimension $d$. If $d = 0$, then there is nothing to show. Suppose that the proposition 
	holds for the $k$-dimensional standard grids with $k \leq d-1$. Consider a $d$-dimensional standard grid $\square[n_1, \dotsc, n_d]$. If $n_a = 0$ for some index $a$, there is nothing to show. 
	Hence, we may assume that $n_1, \dotsc, n_d \neq 0$. In what follows, we will suppress the notation of this tuple for brevity. The spine inclusion of $\square$ factors as
	$$
	\Sp[\square] \xrightarrow{j} \Sp[\square] \cup \dtr_{\leq d-1}[\square] \rightarrow [\square].
	$$
	We have already demonstrated in \ref{rem.spine-ordinary} that the composite is a Segal equivalence. Each closed $k$-entire subshape of $\square$ 
	is isomorphic to a $k$-dimensional standard grid, hence it is good for $k \leq d-1$ on account of the induction hypothesis.
	An application of \ref{lemma.pasting-theorem} with respect to the covering given by the vertebrae of $\square$ 
	now gives that the truncated inclusion
	$$\dtr_{\leq d-1}\Sp[\square] \rightarrow \dtr_{\leq d-1}[\square]$$
	is a Segal equivalence. As $j$ is a pushout of this map, this proves the statement in light of the 2-out-of-3 principle.
\end{proof}

\begin{corollary}
	Any grid is good.
\end{corollary}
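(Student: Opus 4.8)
The plan is to reduce the statement to the case of a standard grid, which is handled by \ref{prop.stgrid-good}, using the $d$-shaping map that witnesses a grid. I would first dispense with open grids: if $A$ is an open grid, witnessed by a $d$-shaping map $f : \dtr_{\leq d-1}\square[n_1,\dotsc,n_d] \to A$, then the domain has no non-degenerate $d$-boxes, so neither does $A$; hence $A$ is $(d-1)$-truncated, $\dtr_{\leq d-1}[A] = [A]$ by \ref{prop.desc-trunc}, and the map $\Sp[A] \cup \dtr_{\leq d-1}[A] \to [A]$ is the identity. So I may assume $A$ is a closed grid, witnessed by a $d$-shaping map $f : \square \to A$ with $\square := \square[n_1,\dotsc,n_d]$, $n_1, \dotsc, n_d \neq 0$, and I write $f_* : [\square] \to [A]$ for the induced map on nerves.

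Next I would record some bookkeeping. Since a map of pasting shapes is determined by its action on vertices (see \ref{prop.simplices-nerve}) and each component $f_a$ is injective, $f_*$ is a monomorphism that both preserves and reflects the dimension of $d$-uple simplices; in particular $f_*([\square]) \cap \dtr_{\leq d-1}[A] = f_*(\dtr_{\leq d-1}[\square])$. Because $f$ is $d$-shaping we also have $[A] = f_*([\square]) \cup \dtr_{\leq d-1}[A]$. The geometric heart of the argument is the identity
$$
\Sp[A] \cup \dtr_{\leq d-1}[A] = f_*\bigl(\Sp[\square]\bigr) \cup \dtr_{\leq d-1}[A].
$$
The inclusion ``$\supseteq$'' is immediate: $\Sp[\square]$ is the union of the $[V_j]$ over the vertebrae $V_j$ of $\square$ (\ref{ex.trivial-grid}), and each $f(V_j)$ is contained in the corresponding vertebra of $A$. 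For ``$\subseteq$'', I would show that every genuinely $d$-dimensional simplex of the nerve of a vertebra $W$ of $A$ factors through the sub-cell $f(V_j) \subseteq W$; the point is that a vertebra of $A$ is a closed cell (\ref{ex.trivial-grid}) and hence has a \emph{unique} non-degenerate $d$-box, which forces every component of such a simplex to take only the two corner values of $W$ in that direction. Any lower-dimensional simplex of $[W]$ already lies in $\dtr_{\leq d-1}[A]$.

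Granting the identity, put $\mathrm{Sp}' := \Sp[\square] \cup \dtr_{\leq d-1}[\square] \subseteq [\square]$. Absorbing $f_*(\dtr_{\leq d-1}[\square])$ into $\dtr_{\leq d-1}[A]$ rewrites the identity as $\Sp[A] \cup \dtr_{\leq d-1}[A] = f_*(\mathrm{Sp}') \cup \dtr_{\leq d-1}[A]$, and intersecting with $f_*([\square])$ shows $f_*(\mathrm{Sp}') \cap \dtr_{\leq d-1}[A] = f_*(\dtr_{\leq d-1}[\square])$. A routine check of unions and intersections of subobjects of $[A]$ then shows that the square
\[
\begin{tikzcd}
f_*(\mathrm{Sp}') \arrow[r] \arrow[d] & \Sp[A] \cup \dtr_{\leq d-1}[A] \arrow[d] \\
f_*([\square]) \arrow[r] & {[A]}
\end{tikzcd}
\]
is a pushout of $d$-uple simplicial sets. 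Its left vertical map is, through the isomorphism $f_* : [\square] \cong f_*([\square])$, the inclusion $\Sp[\square] \cup \dtr_{\leq d-1}[\square] \to [\square]$, which is an injective Segal equivalence since $\square$ is good (\ref{prop.stgrid-good}). Therefore \ref{prop.pushout-seq} shows that the right vertical map $\Sp[A] \cup \dtr_{\leq d-1}[A] \to [A]$ is a Segal equivalence, so $A$ is good.

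The step I expect to be the main obstacle is the ``$\subseteq$'' half of the displayed identity: one must see that the only refinement a closed grid carries beyond the standard grid on its corner coordinates sits in dimension $< d$, so that a truly $d$-dimensional simplex landing inside a vertebra cannot detect it. Unwinding the definitions of cells and vertebrae via \ref{ex.trivial-grid} and invoking uniqueness of the top box of a closed cell makes this elementary; the remainder is a formal pushout manipulation built on \ref{prop.stgrid-good}, \ref{prop.pushout-seq} and \ref{prop.desc-trunc}.
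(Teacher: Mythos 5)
Your proof is correct and follows essentially the same route as the paper: reduce the open case to a triviality, then exhibit the square comparing $\Sp[\square]\cup\dtr_{\leq d-1}[\square] \rightarrow [\square]$ with $\Sp[A]\cup\dtr_{\leq d-1}[A] \rightarrow [A]$ as a pushout using the $d$-shaping property and the description of vertebrae from \ref{ex.trivial-grid}, and conclude via \ref{prop.stgrid-good} and \ref{prop.pushout-seq}. The only difference is that you spell out the union/intersection bookkeeping that the paper leaves implicit.
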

\begin{proof}
	Let $A$ be a $d$-dimensional grid. If $A$ is open, then \ref{rem.generalized-spine} implies that there is nothing to show. Hence, we may assume that 
	there exists a map 
	$$
	f : \square[n_1, \dotsc, n_d] \rightarrow A,
	$$
	with $n_1, \dotsc, n_d \neq 0$, that witnesses $A$ to be a grid.  
	When we combine the description given in \ref{rem.generalized-spine}, the description of the vertebrae of $A$ 
	in \ref{ex.trivial-grid}, and the fact that $f$ is $d$-shaping, we deduce that we have a pushout square 
	\[
		\begin{tikzcd}	
			\Sp[\square] \cup \dtr_{\leq d-1}[\square] \arrow[r]\arrow[d] & \Sp[A] \cup \dtr_{\leq d-1}[A] \arrow[d] \\
			{[\square]} \arrow[r] & {[A]}.
		\end{tikzcd}
	\]
	The left map is a Segal equivalence on account of \ref{prop.stgrid-good}, hence the result follows.
\end{proof} 

The following is also an important example of a good pasting shape:

\begin{definition}
	Suppose that 
	$$
	i : \square[1,\dotsc, 1]\rightarrow \square[n_1, \dotsc, n_d]
	$$
	is an injective map of $d$-dimensional pasting shapes.
	 Then we define the (admittable) pasting shape 
	$$\boxdot[i] \subset \square[n_1, \dotsc,n_d]$$
	to be the subshape of $\square[n_1, \dotsc, n_d]$ that consists of all those boxes $(x,y)$ such that 
	there exists an index $a$ with $x_a \leq y_a \leq i_a(0)$ or $i_a(1) \leq x_a \leq y_a$ (one readily checks that these boxes are closed 
	under faces and joins).
\end{definition}

\begin{example}\label{ex.boxdot}
	The following is a picture for $\boxdot[i]$, with maps $i_1 : [1] \rightarrow [6], i_2 : [1] \rightarrow [4]$ starting and ending at $2, 5$ and $1,3$ respectively:
	\[
	\begin{tikzcd}[execute at end picture={
		\scoped[on background layer]
		\fill[pamblue, opacity = 0.5] (a1.center) -- (a2.center) -- (b4.center) -- (b1.center);
		\scoped[on background layer]
		\fill[pamblue, opacity = 0.5] (b1.center) -- (b2.center) -- (c2.center) -- (c1.center);
		\scoped[on background layer]
		\fill[pamblue, opacity = 0.5] (b3.center) -- (b4.center) -- (c4.center) -- (c3.center);
		\scoped[on background layer]
		\fill[pamblue, opacity = 0.5] (c1.center) -- (c4.center) -- (d2.center) -- (d1.center);
	}]
		|[alias=a1]|\cdot \arrow[r]\arrow[d] & \cdot \arrow[r]\arrow[d] & \cdot \arrow[r] \arrow[d] & \cdot \arrow[r]\arrow[d] & \cdot \arrow[r]\arrow[d]  & \cdot \arrow[r]\arrow[d] &|[alias=a2]|\cdot \arrow[d]  \\
		|[alias=b1]|\cdot \arrow[r]\arrow[d] & \cdot \arrow[r]\arrow[d] & |[alias=b2]| \cdot \arrow[r] \arrow[d] & \cdot \arrow[r] & \cdot \arrow[r]  & |[alias=b3]| \cdot \arrow[r]\arrow[d] &|[alias=b4]|\cdot \arrow[d]  \\
		\cdot \arrow[r]\arrow[d] & \cdot \arrow[r]\arrow[d] & \cdot \arrow[d] &  & {}   & \cdot \arrow[r]\arrow[d] &\cdot \arrow[d]  \\
		|[alias=c1]| \cdot \arrow[r]\arrow[d] & \cdot \arrow[r]\arrow[d] &|[alias=c2]|  \cdot \arrow[r] \arrow[d] & \cdot \arrow[r]\arrow[d] & \cdot \arrow[r]\arrow[d]  &|[alias=c3]|\cdot \arrow[r]\arrow[d] & |[alias=c4]|\cdot \arrow[d]  \\
		|[alias=d1]|\cdot \arrow[r] & \cdot \arrow[r] & \cdot \arrow[r] & \cdot \arrow[r] & \cdot \arrow[r]  & \cdot \arrow[r]&|[alias=d2]|\cdot
	\end{tikzcd}
	\]
\end{example}

\begin{proposition}\label{prop.standard-fillable-good}
	Suppose that $i : \square[1, \dotsc, 1] \rightarrow \square[n_1,\dotsc,n_d]$ is an injective map, then the
	pasting shape $\boxdot[i]$ is good.
\end{proposition}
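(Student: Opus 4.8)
The plan is to present $\boxdot[i]$ as a union of standard subgrids of $\square := \square[n_1, \dotsc, n_d]$ and then invoke \ref{lemma.locally-good}. For each index $a$, let $L_a \subseteq \square$ be the largest subshape all of whose vertices $x$ satisfy $x_a \leq i_a(0)$, and let $R_a \subseteq \square$ be the largest subshape all of whose vertices satisfy $x_a \geq i_a(1)$; each $L_a$ and $R_a$ is, up to isomorphism, a standard grid (possibly with some entries equal to $0$), hence good by \ref{prop.stgrid-good} — which is available in all dimensions $\leq d$, the proof of \ref{thm.spine-admittable} proceeding by induction on dimension. Unwinding the definition of $\boxdot[i]$, a box $(x,y)$ of $\square$ lies in $\boxdot[i]$ precisely when it lies in one of the $L_a$ or $R_a$, so that $\boxdot[i] = \bigcup_{a=1}^{d}(L_a \cup R_a)$.

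First I would record a few routine facts. Any closed entire subshape $E$ of $\boxdot[i]$ has its bounding box in $\boxdot[i]$, hence in some $L_a$ or $R_a$, and then $E$ agrees with the entire subshape of that half-slab with the same bounding box; thus every closed entire subshape of $\boxdot[i]$ is itself a standard grid (so good), and likewise every closed entire subshape of a given $L_a$ or $R_a$ is entire in $\boxdot[i]$. Moreover, the family $\{L_1, R_1, \dotsc, L_d, R_d\}$ is a covering of $\boxdot[i]$: condition (1) is exactly the fact just recorded, while condition (2) holds because any closed $k$-vertebra of $\boxdot[i]$ has its bounding box in $\boxdot[i]$, hence in some half-slab, and is therefore contained in it. These checks place us in the situation of \ref{lemma.locally-good}, applied to $\boxdot[i]$ with the family $\{L_a, R_a\}$, which yields that the inclusion
$$
\textstyle \bigcup_{a} \bigl( \Sp[L_a] \cup \dtr_{\leq d-1}[L_a] \bigr) \cup \bigl( \Sp[R_a] \cup \dtr_{\leq d-1}[R_a] \bigr) \rightarrow \bigcup_{a} \bigl( [L_a] \cup [R_a] \bigr)
$$
is a Segal equivalence.

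It then remains to identify the target of this inclusion with $[\boxdot[i]]$ and the source with $\Sp[\boxdot[i]] \cup \dtr_{\leq d-1}[\boxdot[i]]$; this bookkeeping is where the real content lies. For the target, I would argue that every $d$-uple simplex $\sigma : \square[m_1, \dotsc, m_d] \rightarrow \boxdot[i]$ factors through some $L_a$ or $R_a$: by \ref{prop.simplices-nerve}, the image under $\sigma$ of the corner box of $\square[m_1, \dotsc, m_d]$ (the box from the origin to $(m_1, \dotsc, m_d)$) is a box of $\boxdot[i]$, and an index $a$ witnessing its membership in $\boxdot[i]$ already forces $\sigma$ to land in $L_a$ (or $R_a$); hence $\bigcup_a ([L_a] \cup [R_a]) = [\boxdot[i]]$, and the same argument restricted to simplices of dimension $\leq d-1$ gives $\bigcup_a (\dtr_{\leq d-1}[L_a] \cup \dtr_{\leq d-1}[R_a]) = \dtr_{\leq d-1}[\boxdot[i]]$. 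For the source, I would show that the closed $d$-vertebrae of $\boxdot[i]$ are exactly the grid-aligned cells $V_{(l)} \subseteq \square$ whose corner box lies in $\boxdot[i]$ — equivalently, those cells that occur as a vertebra of some $L_a$ or $R_a$ (using the description of the vertebrae of a grid in \ref{ex.trivial-grid}) — whereas any open $d$-vertebra of $\boxdot[i]$ is $(d-1)$-truncated, so its nerve is absorbed into $\dtr_{\leq d-1}[\boxdot[i]]$; feeding this into the description of the generalized spine in \ref{rem.generalized-spine} gives
$$
\textstyle \Sp[\boxdot[i]] \cup \dtr_{\leq d-1}[\boxdot[i]] = \bigcup_{a} \bigl( \Sp[L_a] \cup \dtr_{\leq d-1}[L_a] \bigr) \cup \bigl( \Sp[R_a] \cup \dtr_{\leq d-1}[R_a] \bigr).
$$
With these two identifications, the displayed inclusion becomes precisely the map $\Sp[\boxdot[i]] \cup \dtr_{\leq d-1}[\boxdot[i]] \rightarrow [\boxdot[i]]$, which is therefore a Segal equivalence, i.e.\ $\boxdot[i]$ is good. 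The main obstacle I anticipate is this last combinatorial step — in particular ruling out that $\boxdot[i]$ has any closed $d$-vertebra that is not a grid-aligned cell, and verifying that open vertebrae contribute nothing to the generalized spine beyond the $(d-1)$-truncation; once that is settled, the half-slab decomposition together with \ref{lemma.locally-good} does all of the homotopical work.
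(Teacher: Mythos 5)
Your proposal is correct and follows essentially the same route as the paper: the half-slabs $L_a, R_a$ are exactly the paper's subshapes $M^-_a[i], M^+_a[i]$, your corner-box argument is the paper's Lemma \ref{lemma.max-grids-boxdot} (that every simplex of $[\boxdot[i]]$ factors through a half-slab, proved there by contradiction rather than directly via the image of the long diagonal box), and the conclusion is drawn from \ref{lemma.locally-good} plus \ref{prop.stgrid-good} in both cases. The extra bookkeeping you supply — identifying the source and target of the map produced by \ref{lemma.locally-good} with $\Sp[\boxdot[i]] \cup \dtr_{\leq d-1}[\boxdot[i]] \rightarrow [\boxdot[i]]$ — is left implicit in the paper but is a correct and welcome elaboration.
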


Let us first make the necessary preparations for its proof.

\begin{definition}
	Let $i : \square[1, \dotsc, 1]\rightarrow \square[n_1, \dotsc, n_d]$ be an injective map.
	For a sign $\sigma \in \{-, +\}$  and index $1 \leq a \leq d$, we define 
	$$M^\sigma_a[i] \subset \square[n_1,\dotsc, n_d]$$ to be the subshape that 
	contains all boxes whose vertices $x$ satisfy
	$$
	\begin{cases}
		x_a \leq i_a(0) & \text{if $\sigma = -$}, \\
		x_a \geq i_a(1) & \text{if $\sigma = +$}.
	\end{cases}
	$$ 
	Note that $M^\sigma_a[i]$ is contained in $\boxdot[i]$.
	Moreover, we define 
	$$
	A[i] \subset \square[n_1,\dotsc, n_d]
	$$
	to be to be the largest subshape of $\square[n_1,\dotsc, n_d]$ whose vertices $x$ satisfy $i_a(0) \leq x_a \leq i_a(1)$ for all indices $a$.
\end{definition}

\begin{lemma}\label{lemma.max-grids-boxdot}
	Let $i : \square[1,\dotsc,1]\rightarrow \square[n_1, \dotsc, n_d]$ be an injective map.
	Then any map of pasting shapes $\square[k_1,\dotsc, k_d]\rightarrow \boxdot[i]$ factors through one of 
	the subshapes  $M^{\sigma}_a[i]$.
\end{lemma}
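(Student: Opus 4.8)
The plan is to analyze an arbitrary map $g\colon\square[k_1,\dotsc,k_d]\to\boxdot[i]$ in terms of its component poset maps $g_a\colon[k_a]\to\N$, using the description of maps out of standard grids from \ref{prop.simplices-nerve}. The image of $g$ is determined by the $d$-box $(g(0,\dotsc,0),g(k_1,\dotsc,k_d))$, which must be a box in $\boxdot[i]$. By the very definition of $\boxdot[i]$, a box $(x,y)$ lies in $\boxdot[i]$ precisely when there is an index $a$ with $y_a\le i_a(0)$ or $x_a\ge i_a(1)$ (using $x_a\le y_a$); here I would first check that this characterization of the $d$-boxes of $\boxdot[i]$ follows from closure under faces and joins, since $\boxdot[i]$ was defined by this condition on \emph{all} boxes and the $d$-boxes are the maximal ones. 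Applying this to the bounding box of the image of $g$, we obtain an index $a$ and a sign $\sigma\in\{-,+\}$ such that either $g_a(k_a)\le i_a(0)$ (take $\sigma=-$) or $g_a(0)\ge i_a(1)$ (take $\sigma=+$).

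First I would handle the case $\sigma=-$: since $g_a$ is a monotone map, $g_a(t)\le g_a(k_a)\le i_a(0)$ for every $t\in[k_a]$, so every vertex $x$ in the image of $g$ satisfies $x_a=g_a(x_a)\le i_a(0)$, which is exactly the defining condition for $M^-_a[i]$. Hence $g$ factors through $M^-_a[i]$. The case $\sigma=+$ is symmetric: $g_a(t)\ge g_a(0)\ge i_a(1)$ for all $t$, so every vertex $x$ of the image has $x_a\ge i_a(1)$, placing the image inside $M^+_a[i]$. In either case we get the desired factorization, and since $M^\sigma_a[i]$ is a subshape of $\square[n_1,\dotsc,n_d]$ containing the image, the factoring map is again a map of pasting shapes by restriction.

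The only genuinely delicate point—and the step I expect to be the main obstacle—is the first one: verifying that membership of the \emph{bounding} $d$-box $(g(\mathbf 0),g(\mathbf k))$ in $\boxdot[i]$ already forces the disjunctive condition ``$y_a\le i_a(0)$ or $x_a\ge i_a(1)$'' for that single box, rather than merely for each of its faces. This is immediate from the definition of $\boxdot[i]$ as stated (the condition is imposed on \emph{all} boxes $(x,y)$, including the $d$-dimensional ones), so strictly speaking there is nothing to prove here; but one should be a little careful that the image of $g$ is determined as a subshape by this single box together with the images $\im(g_a)$, which is precisely the content of \ref{prop.simplices-nerve}. Once that bookkeeping is in place, the monotonicity argument above closes the proof with no further calculation.
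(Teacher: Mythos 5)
Your proof is correct and essentially the same as the paper's: both rest on the fact (via \ref{prop.simplices-nerve}) that any box whose coordinates lie in the images of the component maps $g_a$ must lie in $\boxdot[i]$, and then read off the defining disjunction of $\boxdot[i]$. The only difference is cosmetic — you apply this directly to the bounding box $(g(\mathbf 0),g(\mathbf k))$ and conclude by monotonicity, whereas the paper argues by contradiction with a box built from witnesses $t_a>i_a(0)$, $t'_a<i_a(1)$; your direct version is, if anything, slightly cleaner.
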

\begin{proof}
	Consider a map of pasting shapes $f : \square[k_1, \dotsc, k_d] \rightarrow \boxdot[i]$ and suppose to the contrary that $f$ does not factor through a single 
	$M^\sigma_a[i]$. Then for any $a$, there exist a $t_a$ and $t'_a$ in the image of $f_a$ such that $t_a > i_a(0)$ and $t'_a < i_a(1)$. Define $x_a := \min(t_a, t'_a)$ and $y_a := \max(t_a, t'_a)$. 
	Then $(x,y)$ is in the image of $f$, and hence must be a box in $\boxdot[i]$. Thus there is an index $a$ such that $x_a \leq y_a \leq i_a(0)$ or $i_a(1) \leq x_a \leq y_a$. 
	This is a contradiction as $i_a(1) > t'_a \geq x_a$ and $y_a \geq t_a > i_a(0)$. 
\end{proof}

\begin{proof}[Proof of \ref{prop.standard-fillable-good}]
	For brevity, we drop the notation of the map $i$ in what follows.
	Note that \ref{lemma.max-grids-boxdot} precisely asserts that the inclusion $$\Sp[\boxdot] \cup \dtr_{\leq d-1}[\boxdot] \rightarrow [\boxdot]$$ is given by 
	$$
	\textstyle \bigcup_{\sigma \in \{+,-\}, 1\leq a \leq d} \Sp[M^\sigma_a] \cup \dtr_{\leq d-1}[M^\sigma_a] \rightarrow \bigcup_{\sigma \in \{+,-\}, 1\leq a \leq d} [M^\sigma_a].
	$$
	We may apply \ref{lemma.locally-good} to show that this is a Segal equivalence. To this end, we must show that 
	any closed entire subshape $E$ of $\boxdot$ is good. 
	Indeed, \ref{lemma.max-grids-boxdot} implies that $E$ must be contained in some $M^\sigma_a$. In this case, $M^\sigma_a$ is (isomorphic to) a standard grid, 
	and hence $E$ is a standard grid as well. Thus $E$ is good on account of \ref{prop.stgrid-good}.
\end{proof}

\subsection{Fillable shapes}

Until now we have dealt with concrete admittable pasting shapes and shown that they are good. In order to deal with arbitrary 
admittable pasting shapes and show that they are good as well, we need some machinery to bootstrap the results we have obtained so far. We will 
develop this in the following subsection. 

\begin{definition}
	A division pair $(K,J)$ for an  admittable pasting shape $I$ is called \textit{good} if $K$ and $J$ are both good. 
\end{definition}

The following is a result of the main theorem, \ref{thm.division-pair-fillables}, of this final subsection:

\begin{corollary}\label{cor.division-pair-good}
	Suppose that $(K,J)$ is a good division pair for $I$ so that the corners of $J$ are given by $(\alpha_1, \omega_1), \dotsc, (\alpha_d, \omega_d)$. Then $I$ is good if any injective map 
	$$
	f : \square[m_1,\dotsc, m_d] \rightarrow I,
	$$
	with $m_1, \dotsc, m_d \neq 0$,
	admits an extension 
	$$
	g : \square[n_1, \dotsc, n_d] \rightarrow I
	$$
	such that for each index $a$, $\im(f_a) \subset \im(g_a)$, and if 
	$g_a(0) \leq \alpha_a \leq g_a(n_a)$ or $g_a(0) \leq \omega_a \leq g_a(n_a)$, then, respectively, $\alpha_a \in \im(g_a)$ or $\omega_a \in \im(g_a)$.
\end{corollary}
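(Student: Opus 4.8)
The plan is to show that the inclusion $\Sp[I]\cup\dtr_{\leq d-1}[I]\to[I]$ is a Segal equivalence, which is exactly the assertion that $I$ is good. The extension hypothesis in the statement is precisely a \emph{fillability} condition on $I$ relative to the division pair $(K,J)$, so the corollary will follow from \ref{thm.division-pair-fillables}; what I describe below is the mechanism underlying that theorem in the form relevant here.

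The geometric starting point is \ref{prop.div-pair-boxes}(2). Writing $(\alpha_a,\omega_a)$ for the corners of $J$, it says that a box $(x,y)$ of $I$ lies in $K$ exactly when $y_a\leq\alpha_a$ or $x_a\geq\omega_a$ for some index $a$, whereas $J$ is the entire subshape carved out by $\alpha_a\leq x_a,y_a\leq\omega_a$. Hence the only boxes of $I$ lying in neither $K$ nor $J$ are those that straddle one of the hyperplanes $x_a=\alpha_a$ or $x_a=\omega_a$, and the point of the extension hypothesis is exactly to control them: starting from a grid $f:\square[m_1,\dotsc,m_d]\to I$ it yields an extension $g:\square[n_1,\dotsc,n_d]\to I$ with $\alpha_a\in\im(g_a)$ whenever $g_a(0)\leq\alpha_a\leq g_a(n_a)$, and similarly for $\omega_a$. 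For such a $g$ I would cut $\square[n_1,\dotsc,n_d]$ in each coordinate along whatever levels of $\im(g_a)$ equal $\alpha_a$ or $\omega_a$, subdividing $G:=\im(g)$ into a rectangular array of sub-grids. By the box description, each of these sub-grids lies entirely in $K$ — if in some direction its coordinate range sits wholly on one side of the corresponding corner value — or entirely in $J$ — if in every direction it sits in the middle band; and a sub-grid landing in $K$ meets $J$ only inside $\partial J=K\cap J$, so the pieces are compatible with the division pair. Since every non-degenerate $d$-box of $I$ is the cell of such a grid $g$, this exhibits $I$ as a union of $K$, $J$, and grids of this type.

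Next I would pass to nerves. Here $K$ and $J$ are good by hypothesis and every grid is good by \ref{prop.stgrid-good}, so the idea is to feed \ref{lemma.unions-seq} a family $S$ of pairs of $d$-uple simplicial subsets of $[I]$: the pairs $\bigl(\Sp[G']\cup\dtr_{\leq d-1}[G'],[G']\bigr)$ attached to the sub-grids $G'$ above, together with the analogous pairs attached to $K$, to $J$, and to their closed entire subshapes. Condition (1) of \ref{lemma.unions-seq} should hold because an intersection of two sub-grids is again a sub-grid and the hyperplane cuts survive intersection, while condition (2) — each inclusion being a Segal equivalence — is where goodness of $K$, $J$, and grids is used. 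The $(d-1)$-truncated layers of this bookkeeping would be handled by \ref{lemma.locally-good} and \ref{lemma.pasting-theorem}, with a downward induction on the dimension supplying their hypotheses. Tracking \ref{rem.generalized-spine}, the domains of all these inclusions reassemble to $\Sp[I]\cup\dtr_{\leq d-1}[I]$, which gives the result.

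The principal obstacle is organisational rather than a single hard estimate. The naive hope $[I]=[K]\cup[J]$ is already false for $d=1$: a simplex of $[I]$ need not factor through $K$, through $J$, or through a single sub-grid $G'$. So the delicate points are to choose the indexing family for \ref{lemma.unions-seq} so that its condition (1) holds exactly, to verify that the spine-type subobjects $\Sp[G']\cup\dtr_{\leq d-1}[G']$ genuinely recombine into $\Sp[I]\cup\dtr_{\leq d-1}[I]$ (matching \ref{rem.generalized-spine}), and to check that the hyperplane subdivisions of two overlapping grids of the above type agree on their intersection. Pinning down this organisation is the substance of \ref{thm.division-pair-fillables}, and the corollary then follows formally.
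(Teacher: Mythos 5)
Your high-level skeleton is the right one --- factor the spine inclusion through $[K]\cup[J]\cup\dtr_{\leq d-1}[I]$ via \ref{prop.division-pair-spine} and handle the second map via \ref{thm.division-pair-fillables} --- but the mechanism you describe for that second step does not work, and it skips the one verification that is the actual content of this corollary. Your subdivision of $\im(g)$ along the levels $\alpha_a,\omega_a$ produces sub-grids each of which lies entirely in $K$ or entirely in $J$; consequently the union of their nerves is contained in $[K]\cup[J]$, and feeding the pairs $\bigl(\Sp[G']\cup\dtr_{\leq d-1}[G'],[G']\bigr)$ to \ref{lemma.unions-seq} can at best recover the inclusion $\Sp[I]\cup\dtr_{\leq d-1}[I]\rightarrow[K]\cup[J]\cup\dtr_{\leq d-1}[I]$, i.e.\ essentially \ref{prop.division-pair-spine} again. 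The top-dimensional simplex of $[I]$ classified by $g$ itself (when $\im(g)$ straddles $\partial J$) factors through none of your pieces, so your union never reaches $[I]$. Two further problems: your pairs fail condition (1) of \ref{lemma.unions-seq}, since a vertebra of one grid intersected with another grid need not land in a vertebra of the latter (this is exactly why \ref{thm.division-pair-fillables} works with the pairs $([F\cap K]\cup[F\cap J]\cup\dtr_{\leq d-1}[F],[F])$, which are stable under intersection by \ref{prop.fillables-intersections}); and \ref{thm.division-pair-fillables} by itself only enlarges $[K]\cup[J]\cup\dtr_{\leq d-1}[I]$ by the nerves of \emph{given} fillable subshapes --- to conclude that $I$ is good you must still show these exhaust $[I]$.

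That last point is where the extension hypothesis enters, and it is the step your proposal leaves unproved. The paper's argument is: given an injective $f:\square[m_1,\dotsc,m_d]\rightarrow I$ with all $m_a\neq 0$, pass to the extension $g$; if $\max(g_a(0),\alpha_a)\geq\min(g_a(n_a),\omega_a)$ for some $a$ then $\im(g)\subset K$ by \ref{prop.div-pair-boxes}; otherwise the defining property of $g$ forces $\max(g_a(0),\alpha_a)$ and $\min(g_a(n_a),\omega_a)$ to lie in $\im(g_a)$ for every $a$, which is exactly condition (2) of \ref{lemma.factored-maps}, so $g$ underlies a map of division pairs $(\boxdot[i],A[i])\rightarrow(K,J)$ and its (undivided) image is a single $(K,J)$-fillable subshape. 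This yields $[I]=\bigcup_{F}[F]\cup[K]\cup[J]\cup\dtr_{\leq d-1}[I]$ over the fillables $F$, after which \ref{thm.division-pair-fillables} applies. You assert that the extension hypothesis ``is precisely a fillability condition'' but never make this identification via \ref{lemma.factored-maps}, and your decomposition points away from it: the fillable object is $\im(g)$ itself, not its sub-grids, and the relevant model is $(\boxdot[i],A[i])$, whose whole point is to retain the large simplices that your cutting discards.
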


Let us first show how we can finish the proof of the pasting theorem using this result.

\begin{proof}[Proof of \ref{thm.spine-admittable}]
Let $I$ be a $d$-dimensional admittable pasting shape. If $I$ is height $0$, then $I$ is a cell and there is nothing to prove. Suppose that the statement holds 
for any admittable pasting shape $I$ of height $h$.

Let $I$ now be an admittable pasting shape of height $h+1$ with decomposition $(A, \{J_i\}_{i=1}^m)$, where $A$ is determined by a map 
$f : \dtr_{\leq d-1}\square[n_1,\dotsc,n_d] \rightarrow A$. For $0 \leq k \leq m$, consider the subshape 
$$
I_{k} := A \cup \bigcup_{i=1}^k J_i \subset I.
$$
Then it suffices to show that $I_k$ is good for all $I$. We proceed by induction on $k$. Trivially, the pasting shape $I_0 = A$ is good since it is $(d-1)$-truncated. Suppose that $I_{k-1}$ is good.  
The subshape  $J_k$ has height $h$, hence is good by assumption. It follows that the pair 
$(I_{k-1}, J_k)$ is a good division pair for $I_k$. Note that $I_k$ has a decomposition given by $(A, \{J_0, \dotsc, J_k, \partial J_{k+1}, \dotsc, \partial J_m\})$. Thus, in light of the factorization property of \ref{lemma.decomp-fact-prop}, 
any map $h : \square[m_1,\dotsc, m_d] \rightarrow I_k$, with $m_1, \dotsc, m_d\neq 0$, admits an extension 
$ g : \square[p_1,\dotsc, p_d] \rightarrow I_k$ such that 
$$
\im(g_a) = \{t_a \in \im(f_a) \mid h_a(0) \leq t_a \leq h_a(m_a)\} \cup \im(h_a).
$$
This is a desired extension $g$ of $h$ as in \ref{cor.division-pair-good}, hence it follows from this result that $I_k$ is good.
\end{proof}

\begin{proposition}\label{prop.division-pair-spine}
Suppose that $(K,J)$ is a division pair for $I$. Then the spine inclusion for $I$ factors 
as the composite of inclusions
$$
\Sp[I] \cup \dtr_{\leq d-1}[I]  \rightarrow [K] \cup [J] \cup \dtr_{\leq d-1}[I] \rightarrow [I],
$$
and the left map is a Segal equivalence if $(K,J)$ is good.
\end{proposition}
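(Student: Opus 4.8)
The plan is to realize both inclusions in the claimed factorization as ordinary inclusions of $d$-uple simplicial subsets of $[I]$, and then to identify the left-hand map as a pushout (along a truncation inclusion) of a map that is manifestly a Segal equivalence when $(K,J)$ is good. First I would check that the factorization makes sense: by \ref{prop.div-pair-boxes}(2), every box of $I$ either lies in $K$ or lies in $J$ (a box not in $K$ has, for every index $a$, $y_a > \alpha_a$ and $x_a < \omega_a$, which together with closure under faces and the description of the boxes of $I$ forces it into $J$); hence $I = K \cup J$ at the level of pasting shapes, so $[I] = [K] \cup [J] \cup \dtr_{\leq d-1}[I]$ already as $d$-uple simplicial sets, and the right map in the statement is an equality. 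Similarly $\Sp[I] \cup \dtr_{\leq d-1}[I]$ is visibly contained in $[K] \cup [J] \cup \dtr_{\leq d-1}[I]$, since every vertebra of $I$ is a vertebra of $J$ (by the computation of vertebrae in the proof of \ref{thm.baby-pasting-theorem}, applied to the decomposition $(\dtr_{\leq d-1}K, \{J\})$-type situation, or directly from \ref{prop.div-pair-boxes}), so in particular $\Sp[I] \subset [J] \cup \dtr_{\leq d-1}[I]$.

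The heart of the argument is the claim that the left map is a Segal equivalence when $K$ and $J$ are good. Here I would use \ref{rem.generalized-spine} to rewrite
$$
\Sp[I] \cup \dtr_{\leq d-1}[I] = \bigl(\Sp[K]\cup\dtr_{\leq d-1}[K]\bigr) \cup \bigl(\Sp[J]\cup\dtr_{\leq d-1}[J]\bigr) \cup \dtr_{\leq d-1}[I],
$$
using that every injective $\square[1,\dotsc,1]\to I$ whose image lies in a vertebra of $I$ has image in a vertebra of $J$ (so the cells appearing already appear among those for $J$), while $\dtr_{\leq d-1}[I]$ absorbs the $(d-1)$-truncated pieces. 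Since $K$ and $J$ are good, the inclusions $\Sp[K]\cup\dtr_{\leq d-1}[K]\to [K]$ and $\Sp[J]\cup\dtr_{\leq d-1}[J]\to[J]$ are Segal equivalences. I would now assemble these via \ref{lemma.unions-seq}: take $S$ to consist of the pair $(\Sp[K]\cup\dtr_{\leq d-1}[K],[K])$, the pair $(\Sp[J]\cup\dtr_{\leq d-1}[J],[J])$, and all pairs of the form $(\dtr_{\leq d-1}[E],\dtr_{\leq d-1}[E])$ for $E$ a closed entire subshape of $I$ contained in $K$ or $J$ (these are identities, hence trivially Segal equivalences). The bookkeeping condition (1) of \ref{lemma.unions-seq} — that intersections $A\cap B'$ land in $A'$ and that $(A\cap B', B\cap B')$ lies again in $S$ — is where most of the care goes: I expect to need \ref{prop.div-pair-boxes}(2) again to control $[K]\cap[J] = [K\cap J] = [\partial J]$, together with \ref{prop.desc-trunc} to see that truncations of nerves intersect correctly, and the fact that $\partial J$, being a boundary of a grid-like entire subshape, is $(d-1)$-truncated so lands among the trivial pairs.

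The main obstacle I anticipate is precisely verifying condition (1) of \ref{lemma.unions-seq} for this collection $S$: one must check that the intersection of $[K]$ with $\Sp[J]\cup\dtr_{\leq d-1}[J]$ is contained in $\Sp[K]\cup\dtr_{\leq d-1}[K]$ and, crucially, that $(K\cap(\Sp[J]\cup\dtr_{\leq d-1}[J]),\,[K]\cap[J])$ is again of the listed form — this comes down to the geometric fact that $K\cap J = \partial J$ is an open vertebra of $K$, so it is $(d-1)$-truncated and hence its nerve equals its own $(d-1)$-truncation, landing it among the identity pairs. Once that is in place, \ref{lemma.unions-seq} applied to the finite list of pairs coming from $K$, $J$, and their common boundary gives that the union inclusion is a Segal equivalence, which is exactly the statement that the left map $\Sp[I]\cup\dtr_{\leq d-1}[I]\to [K]\cup[J]\cup\dtr_{\leq d-1}[I]$ is a Segal equivalence. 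I would close by remarking that this completes the proof, the right map being an identity as noted above.
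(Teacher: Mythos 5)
Your core argument for the actual content of the proposition --- that the left map is a Segal equivalence when $(K,J)$ is good --- is essentially the paper's. You rewrite $\Sp[I]\cup\dtr_{\leq d-1}[I]$ as the union of $\Sp[K]\cup\dtr_{\leq d-1}[K]$, $\Sp[J]\cup\dtr_{\leq d-1}[J]$ and $\dtr_{\leq d-1}[I]$ and then assemble the two good inclusions, using \ref{lemma.unions-seq}; the paper performs the same gluing by two successive pushout squares (first adjoining $[K]$, then $[J]$) and \ref{prop.pushout-seq}. The pushout route sidesteps most of the bookkeeping you anticipate: the only intersection one must control is $[K]\cap[J]=[\partial J]$, which is $(d-1)$-truncated, and that is exactly what makes each left vertical map in the paper's two squares the inclusion one wants. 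Your version can be completed, but be careful which identity pairs you admit into $S$: condition (1) of \ref{lemma.unions-seq} forces every $(C,C)\in S$ to satisfy $C\cap[K]\subset\Sp[K]\cup\dtr_{\leq d-1}[K]$ and $C\cap[J]\subset\Sp[J]\cup\dtr_{\leq d-1}[J]$, so you should take $C$ to range over $\dtr_{\leq d-1}[I]$ and its intersections with $[K]$, $[J]$ and $[\partial J]$, not over arbitrary closed entire subshapes.

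Two assertions in your write-up are, however, false. First, $[I]=[K]\cup[J]\cup\dtr_{\leq d-1}[I]$ does \emph{not} hold: $I=K\cup J$ as pasting shapes, but the nerve does not preserve unions. A $d$-dimensional simplex of $[I]$ is a map $\square[n_1,\dotsc,n_d]\rightarrow I$, and although every box in its image lies in $K$ or in $J$ by \ref{prop.div-pair-boxes}, the map itself need not factor through either; in the paper's example of a division pair the non-degenerate $2$-box $((0,0),(2,2))$ of $I$ gives such a simplex. Indeed, if the right-hand map were an equality, the entire machinery of fillable shapes, \ref{thm.division-pair-fillables} and \ref{cor.division-pair-good} --- whose sole purpose is to show that this inclusion is a Segal equivalence under additional hypotheses --- would be vacuous. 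The proposition only asserts that the right-hand map is an inclusion, so your proof of the stated claim survives, but the misconception is worth correcting. Second, it is not true that every vertebra of $I$ is a vertebra of $J$: the vertebrae of $I$ are the vertebrae of $J$ together with the vertebrae of $K$ other than $\partial J$, so the closed vertebrae of $K$ survive into $I$. The containment $\Sp[I]\subset[K]\cup[J]\cup\dtr_{\leq d-1}[I]$ that you want still follows, but from this corrected description rather than from yours; note also that $(\dtr_{\leq d-1}K,\{J\})$ need not be a decomposition in the sense of the paper, since $\dtr_{\leq d-1}K$ need not be an open grid.
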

\begin{proof}
Suppose that all the distinct vertebrae of $K$ are given by $V_1, \dotsc, V_k, \partial J$, and the vertebrae of $J$ are given by $W_1, \dotsc, W_l$. 
Then the vertebrae of $I$ are given by $V_1, \dotsc, V_k, W_1, \dotsc, W_l$. Hence, the spine inclusion factors through the inclusion 
$[K] \cup [J] \cup \dtr_{\leq d-1}[I] \subset [I]$. Note that we have pushout diagrams 
\[
	\begin{tikzcd}
		\Sp[K] \cup \dtr_{\leq d-1}[K] \arrow[r] \arrow[d] & \Sp[I] \cup \dtr_{\leq d-1}[I] \arrow[d] \\
		{[K]} \arrow[r] & \Sp[I] \cup {[K]} \cup \dtr_{\leq d-1}[I],
	\end{tikzcd}
\]
and
\[
	\begin{tikzcd}
		\Sp[J] \cup \dtr_{\leq d-1}[J] \arrow[r] \arrow[d] & \Sp[I] \cup [K] \cup \dtr_{\leq d-1}[I] \arrow[d] \\
		{[J]} \arrow[r] & {[K]} \cup {[J]} \cup \dtr_{\leq d-1}[I].
	\end{tikzcd}
\]
If $(K,J)$ is good, the left vertical maps in these diagrams are Segal equivalences, hence, the composite of the right vertical maps must then be a Segal equivalence as well.
\end{proof}

\begin{example}\label{ex.division-pair-boxdot}
	The division pair $(\boxdot[i], A[i])$ for $\square[n_1,\dotsc,n_d]$  associated to an injective map $$i:\square[1, \dotsc, 1] \rightarrow \square[n_1,\dotsc, n_d],$$ is good. Since 
	$\square[n_1,\dotsc, n_d]$ is good, \ref{prop.division-pair-spine} 
	implies that the inclusion 
	$$
	[\boxdot[i]] \cup [A[i]] \cup \dtr_{\leq d-1}[\square[n_1,\dotsc, n_d]] \rightarrow [\square[n_1,\dotsc, n_d]]
	$$
	is a Segal equivalence.
\end{example}

\begin{definition}
	Suppose that $(K,J)$ and $(K', J')$ are division pairs for, respectively, $d$-admittable pasting shapes $I$ and $I'$.
	Then a \textit{map of division pairs} $$f : (K,J) \rightarrow (K', J')$$
	is a map of pasting shapes $f : I \rightarrow I'$ such that $$f^{-1}(J') = J, \quad \text{ and, } \quad f^{-1}(K') \cup \dtr_{\leq d-1}I = K \cup \dtr_{\leq d-1}I.$$
\end{definition}

In view of \ref{prop.division-pair-spine}, we have to show that the map appearing on the right is a Segal equivalence. This amounts to filling simplices (or rather: nerves of subshapes) that 
do not appear in the domain of this map, using pushouts of Segal equivalences. We single out a class of pasting shapes that we will be able to fill: 

\begin{definition}\label{def.fillables}
	Suppose that $(K,J)$ is a division pair for $I$. We call a subshape $F \subset I$ 
	\textit{$(K,J)$-fillable} if $F$ is $(d-1)$-truncated or there exists a $d$-shaping map $$f : \square[n_1,\dotsc, n_d] \rightarrow F$$
	such that one of the two following conditions is met:
	\begin{enumerate}
		\item $f$ has image in $K$,
		\item the composite $$\square[n_1, \dotsc, n_d] \xrightarrow{f} F \rightarrow I$$ underlies a map of division pairs 
	$$(\boxdot[i], A[i]) \rightarrow (K,J)$$ for some injective map $i: \square[1,\dotsc,1] \rightarrow \square[n_1,\dotsc, n_d]$.
	\end{enumerate}
\end{definition}

\begin{example}
	The subgrid 
	\[
		F:= \begin{tikzcd}[column sep = 8, row sep = 8,execute at end picture={
			\scoped[on background layer]
			\fill[pamblue, opacity = 0.5] (a.center) -- (b.center) -- (d.center) -- (c.center);
		}]
			  |[alias=a]| (1,1)\arrow[r]\arrow[d] & (2,1) \arrow[r] &  |[alias=b]|(3,1)\arrow[d]\\
			  (1,2)\arrow[rr]\arrow[d] &  &  (3,2) \arrow[d] \\
			  |[alias=c]|(1,3) \arrow[rr] && |[alias=d]| (3,3)
		\end{tikzcd}
		\subset
		\begin{tikzcd}[column sep = 8, row sep = 8,execute at end picture={
			\scoped[on background layer]
			\fill[pamblue, opacity = 0.5] (a.center) -- (b.center) -- (d.center) -- (c.center);
		}]
		|[alias=a]|(0,0)\arrow[dd] \arrow[r, ""'name=f4] & (1,0) \arrow[d] \arrow[r, ""'name=f5] & (2,0)\arrow[d] \arrow[r] & |[alias=b]| (3,0)\arrow[d] \\
			& (1,1)\arrow[r]\arrow[d] & (2,1) \arrow[r] & (3,1)\arrow[d]\\
			(0,2)\arrow[d] \arrow[r,""name=t4] &(1,2)\arrow[d]\arrow[rr,""name=t6] &  & (3,2)\arrow[d] \\
			|[alias=c]| (0,3) \arrow[r] & (1,3) \arrow[rr] &  & |[alias=d]|(3,3)
		\end{tikzcd} =: I
	\]
	is fillable with respect to the division pair
	\[	
		(K,J) := \left( \begin{tikzcd}[column sep = 8, row sep = 8,execute at end picture={
			\scoped[on background layer]
			\fill[pamblue, opacity = 0.5] (a.center) -- (b.center) -- (d.center) -- (c.center);
			\scoped[on background layer]
			\fill[pamblue, opacity = 0.5] (e.center) -- (f.center) -- (g.center) -- (d.center);
		}]
		|[alias=a]|(0,0)\arrow[dd] \arrow[r, ""'name=f4] & |[alias=b]|(1,0) \arrow[d] \arrow[r, ""'name=f5] & (2,0) \arrow[r] & (3,0)\arrow[d] \\
			  & (1,1)\arrow[d] &  & (3,1)\arrow[d]\\
			 (0,2)\arrow[d] \arrow[r,""name=t4] &|[alias=e]|(1,2)\arrow[d]\arrow[rr,""name=t6] &  & |[alias=f]|(3,2)\arrow[d] \\
			 |[alias=c]| (0,3) \arrow[r] &|[alias=d]| (1,3) \arrow[rr] &  & |[alias=g]|(3,3),
		\end{tikzcd}\;\;
		\begin{tikzcd}[column sep = 8, row sep = 8,execute at end picture={
			\scoped[on background layer]
			\fill[pamblue, opacity = 0.5] (a.center) -- (b.center) -- (d.center) -- (c.center);
		}]
		|[alias=a]|(1,0) \arrow[d] \arrow[r, ""'name=f5] & (2,0)\arrow[d] \arrow[r] & |[alias=b]| (3,0)\arrow[d] \\
			(1,1) \arrow[r]\arrow[d] & (2,1) \arrow[r] & (3,1)\arrow[d]\\
			|[alias=c]| (1,2)\arrow[rr,""name=t6] &  & |[alias=d]|(3,2)
		\end{tikzcd}
		\right)
	\]
	for $I$. The unique injective map $\square[1,2] \rightarrow F$ underlies a map of division pairs $(\boxdot[i], A[i]) \rightarrow (K,J)$, where $i : \square[1,1] \rightarrow \square[1,2]$ selects 
	the top 2-box.

	In general, not each grid is necessarily fillable. Namely, the subgrid
	\[
		\begin{tikzcd}[column sep = 8, row sep = 8, execute at end picture={
			\scoped[on background layer]
			\fill[pamblue, opacity = 0.5] (a.center) -- (b.center) -- (d.center) -- (c.center);
		}]
			|[alias=a]|(0,0) \arrow[r]\arrow[d] & (0,1) \arrow[r] \arrow[d] &(2,0) \arrow[r] & |[alias=b]|(3,0)\arrow[d] \\
			(0,1) \arrow[r]\arrow[d] & (1,1) \arrow[d]\arrow[r] & (2,1) \arrow[r] & (3,1)\arrow[d] \\
			|[alias=c]|(0,2) \arrow[r] & (1,2) \arrow[rr] && |[alias=d]|(3,2)
		\end{tikzcd} \subset 
\begin{tikzcd}[column sep = 8, row sep = 8, execute at end picture={
	\scoped[on background layer]
	\fill[pamblue, opacity = 0.5] (a.center) -- (b.center) -- (d.center) -- (c.center);
}]
	|[alias=a]|(0,0) \arrow[r]\arrow[d] & (0,1) \arrow[r] \arrow[d] &(2,0)\arrow[d] \arrow[r] & |[alias=b]|(3,0)\arrow[d] \\
	(0,1) \arrow[r]\arrow[d] & (1,1) \arrow[d]\arrow[r] & (2,1) \arrow[r] & (3,1)\arrow[d] \\
	|[alias=c]|(0,2) \arrow[r] & (1,2) \arrow[rr] && |[alias=d]|(3,2)
\end{tikzcd}
\]
is \textit{not} fillable with respect to the division pair 
\[
	\left(\begin{tikzcd}[column sep = 8, row sep = 8, execute at end picture={
		\scoped[on background layer]
		\fill[pamblue, opacity = 0.5] (a.center) -- (b.center) -- (d.center) -- (c.center);
		\scoped[on background layer]
		\fill[pamblue, opacity = 0.5] (e.center) -- (f.center) -- (b.center) -- (g.center);
	}]
		(0,0) \arrow[r]\arrow[d] & (0,1) \arrow[r]  &  |[alias=e]| (2,0)\arrow[d] \arrow[r] & |[alias=f]|(3,0)\arrow[d] \\
		|[alias=a]| (0,1) \arrow[r]\arrow[d] & (1,1) \arrow[d]\arrow[r] &|[alias=g]|(2,1) \arrow[r] & |[alias=b]|(3,1)\arrow[d] \\
		|[alias=c]|(0,2) \arrow[r] & (1,2) \arrow[rr] && |[alias=d]|(3,2),
	\end{tikzcd}
	\begin{tikzcd}[column sep = 8, row sep = 8, execute at end picture={
		\scoped[on background layer]
		\fill[pamblue, opacity = 0.5] (a.center) -- (b.center) -- (d.center) -- (c.center);
	}]
		|[alias=a]|(0,0) \arrow[r]\arrow[d] & (0,1) \arrow[r] \arrow[d] &|[alias=b]|(2,0)\arrow[d] \\
		|[alias=c]|(0,1) \arrow[r] & (1,1) \arrow[r] & |[alias=d]|(2,1) 
	\end{tikzcd}
	\right).
\]	
\end{example}

The goal of this section is to prove the following theorem:

\begin{theorem}\label{thm.division-pair-fillables}
	Let $(K,J)$ be a division pair for $I$. Suppose that $F_1, \dotsc, F_n$ are $(K,J)$-fillable subshapes of $I$,
	then the inclusion 
	$$
	\textstyle [K] \cup [J] \cup \dtr_{\leq d-1}[I] \rightarrow \bigcup_{i=1}^n [F_i] \cup [K] \cup [J] \cup \dtr_{\leq d-1}[I]
	$$
	is a Segal equivalence.
\end{theorem}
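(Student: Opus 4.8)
The plan is to deduce the statement from \ref{lemma.unions-seq}. Write $B := [K] \cup [J] \cup \dtr_{\leq d-1}[I]$, so the map in question is $B \to \bigcup_{i=1}^{n} [F_i] \cup B$. Pushing out along the inclusion $B \cap \bigl(\bigcup_i [F_i]\bigr) \to B$ and invoking \ref{prop.pushout-seq}, it suffices to show that $B \cap \bigl(\bigcup_i [F_i]\bigr) \to \bigcup_i [F_i]$ is a Segal equivalence. I would get this from \ref{lemma.unions-seq}, applied inside the $d$-uple simplicial set $\bigcup_i [F_i]$ to the collection $S$ of pairs $\bigl(B \cap [G],\, [G]\bigr)$ with $G$ ranging over the finite intersections $F_{i_1} \cap \dotsb \cap F_{i_r}$ of the $F_i$. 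Since the nerve functor preserves intersections of subshapes — one has $[G \cap G'] = [G] \cap [G']$ inside $[I]$, immediately from \ref{prop.simplices-nerve} — condition (1) of \ref{lemma.unions-seq} holds automatically for $S$ (the family of such intersections being closed under intersection). Everything thus reduces to condition (2): for every finite intersection $G$ of the $F_i$, the inclusion $B \cap [G] \to [G]$ is a Segal equivalence.

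The heart of the argument is the case $r = 1$, i.e.\ a single $(K,J)$-fillable subshape $F$. If $F$ is $(d-1)$-truncated, or carries a $d$-shaping map $f \colon \square[n_1,\dotsc,n_d] \to F$ whose image lies in $K$, then $[F] \subseteq B$, so $B \cap [F] \to [F]$ is the identity: $d$-shaping of $f$ gives $[F] = f_*\Delta[n_1,\dotsc,n_d] \cup \dtr_{\leq d-1}[F]$, and $f_*\Delta[n_1,\dotsc,n_d] \subseteq [K]$ while $\dtr_{\leq d-1}[F] \subseteq \dtr_{\leq d-1}[I]$. The essential case is when $F$ carries a $d$-shaping map $f$ such that $\phi := \bigl(\square[n_1,\dotsc,n_d] \xrightarrow{f} F \hookrightarrow I\bigr)$ underlies a map of division pairs $(\boxdot[i], A[i]) \to (K,J)$. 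Since $f$, hence $\phi$, is injective, $\phi_* \colon \Delta[n_1,\dotsc,n_d] = [\square[n_1,\dotsc,n_d]] \to [I]$ is a monomorphism (using \ref{cor.cosimp-objs} for the identification). From $[F] = f_*\Delta[n_1,\dotsc,n_d] \cup \dtr_{\leq d-1}[F]$ and $\dtr_{\leq d-1}[F] \subseteq B$ one gets $[F] \cup B = \phi_*\Delta[n_1,\dotsc,n_d] \cup B$, so it is enough that $B \to \phi_*\Delta[n_1,\dotsc,n_d] \cup B$ be a Segal equivalence. Now the identities $\phi^{-1}(J) = A[i]$ and $\phi^{-1}(K) \cup \dtr_{\leq d-1}\square[n_1,\dotsc,n_d] = \boxdot[i] \cup \dtr_{\leq d-1}\square[n_1,\dotsc,n_d]$, together with the description of nerve simplices from \ref{prop.simplices-nerve}, give after a direct inspection
$$
\phi_*\Delta[n_1,\dotsc,n_d] \,\cap\, B \;=\; \phi_*\bigl([\boxdot[i]] \cup [A[i]] \cup \dtr_{\leq d-1}\Delta[n_1,\dotsc,n_d]\bigr) .
$$
Hence the square obtained by pushing the inclusion $[\boxdot[i]] \cup [A[i]] \cup \dtr_{\leq d-1}[\square[n_1,\dotsc,n_d]] \to [\square[n_1,\dotsc,n_d]]$ forward along the monomorphism $\phi_*$ is a genuine pushout square of $d$-uple simplicial sets, with left leg the inclusion of $B$ into $\phi_*\Delta[n_1,\dotsc,n_d] \cup B$. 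Its top row is a Segal equivalence by \ref{ex.division-pair-boxdot}, so \ref{prop.pushout-seq} settles the single-shape case.

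For general $G = F_{i_1} \cap \dotsb \cap F_{i_r}$ I would induct on $r$, with an auxiliary induction on the dimension $d$ to handle restrictions to hyperplanes (as in the proof of \ref{lemma.decomp-fact-prop-comp}). The main obstacle — where I expect the bulk of the technical work — is that a pairwise intersection of $(K,J)$-fillable subshapes need not again be $(K,J)$-fillable. What rescues the argument is that an intersection of grids is again a grid: if $f, f'$ are the $d$-shaping maps of $F_{i_1}, F_{i_2}$, then $F_{i_1} \cap F_{i_2} = G_0 \cup \dtr_{\leq d-1}(F_{i_1}\cap F_{i_2})$ where $G_0 := f(\square[m_1,\dotsc,m_d]) \cap f'(\square[m_1',\dotsc,m_d'])$ is a grid. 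Restricting the type-(2) data of $F_{i_1}$ to $G_0$ either presents $F_{i_1}\cap F_{i_2}$ once more as $(K,J)$-fillable, or the restricted $A$-part degenerates in some direction, and then — exactly as in the proof of \ref{prop.standard-fillable-good} via \ref{lemma.max-grids-boxdot} — one rewrites the would-be $\boxdot$-filling as a union of standard-grid fillings and concludes from \ref{prop.stgrid-good} and the single-shape case above. Feeding the pairs obtained in this way into \ref{lemma.unions-seq} establishes condition (2) for all $G$, and the theorem follows.
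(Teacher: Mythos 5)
Your overall skeleton is the same as the paper's: reduce by a pushout (via \ref{prop.pushout-seq}) to the inclusion $\bigcup_i \bigl([F_i\cap K]\cup[F_i\cap J]\cup\dtr_{\leq d-1}[F_i]\bigr)\rightarrow\bigcup_i[F_i]$, then feed a suitable family of pairs into \ref{lemma.unions-seq}. Your single-shape case is correct and is essentially the paper's \ref{prop.division-pair-fillables-base}: the $(d-1)$-truncated and image-in-$K$ cases are identities, and the condition-(2) case is a pushout along the monomorphism $\phi_*$ of the Segal equivalence $[\boxdot[i]]\cup[A[i]]\cup\dtr_{\leq d-1}[\square]\rightarrow[\square]$ from \ref{ex.division-pair-boxdot}.

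The genuine gap is exactly where you flag "the main obstacle". You assert that a pairwise intersection of $(K,J)$-fillable subshapes need not be $(K,J)$-fillable and then sketch an induction on $r$ (with an auxiliary induction on $d$ and an appeal to \ref{lemma.max-grids-boxdot} and \ref{prop.stgrid-good}) that is never carried out. In fact the premise is false: the paper's \ref{prop.fillables-intersections} proves that the $(K,J)$-fillables \emph{are} closed under intersection, which is precisely what makes condition (1) of \ref{lemma.unions-seq} hold for the family of pairs indexed by fillable subshapes and reduces condition (2) to the single-shape case you already handled. The proof of that closure uses the three ingredients you half-identify but do not assemble: \ref{lemma.pullback-standard-grids} shows the intersection of the two standard-grid images is again the image of a standard grid and that the induced map $h$ with $\im(h_a)=\im(f_a)\cap\im(g_a)$ is $d$-shaping; \ref{lemma.factored-maps} characterizes condition (2) of \ref{def.fillables} numerically as $\max(\min(f_a),\alpha_a)<\min(\max(f_a),\omega_a)\in\im(f_a)$; and \ref{cor.shaping-fillable} shows this numerical condition passes to $h$, with the degenerate cases ($\alpha'_a\geq\omega'_a$ for some $a$) landing in the image-in-$K$ or $(d-1)$-truncated clauses of fillability, which are still fillable. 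Your proposed detour through rewriting a "would-be $\boxdot$-filling as a union of standard-grid fillings" is both unnecessary and unverified as stated; as written, the argument for general intersections $G$ does not close.
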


Its demonstration requires some preparation. The following is one of the crucial observations:

\begin{proposition}\label{prop.division-pair-fillables-base}
	Suppose that $(K,J)$ is a division pair. Then for an $(K,J)$-fillable subshape $F$, the inclusion 
	$$[F\cap K] \cup [F \cap J] \cup \dtr_{\leq d-1}[F] \rightarrow [F]$$
	is a Segal equivalence.
\end{proposition}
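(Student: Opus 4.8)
\section*{Proof plan}

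The plan is to run a case analysis following \ref{def.fillables}. Two of the three cases are immediate. If $F$ is $(d-1)$-truncated then $\dtr_{\leq d-1}[F] = [F]$ and the map in question is the identity. If there is a $d$-shaping map $f : \square[n_1, \dotsc, n_d] \rightarrow F$ whose image lies in $K$, then $f$ factors through $F \cap K$, whence $f([\square[n_1, \dotsc, n_d]]) \subseteq [F \cap K]$; since $f$ is $d$-shaping we have $[F] = f([\square[n_1, \dotsc, n_d]]) \cup \dtr_{\leq d-1}[F]$, and again the inclusion is an identity. So all of the content lies in the remaining case: there is a $d$-shaping map $f : \square \rightarrow F$ (write $\square := \square[n_1, \dotsc, n_d]$) such that $\phi := (F \hookrightarrow I) \circ f$ underlies a map of division pairs $(\boxdot[i], A[i]) \rightarrow (K, J)$, and I abbreviate $\boxdot := \boxdot[i]$, $A := A[i]$.

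In this case I would transport the Segal equivalence of \ref{ex.division-pair-boxdot},
$$
[\boxdot] \cup [A] \cup \dtr_{\leq d-1}[\square] \xrightarrow{\ \simeq\ } [\square],
$$
along $f$. As $f$ is injective, the map $[\square] \rightarrow [F]$ is a monomorphism, so $f$ carries the above inclusion isomorphically onto $P \hookrightarrow f([\square])$, where $P := f([\boxdot]) \cup f([A]) \cup f(\dtr_{\leq d-1}[\square])$; in particular this is again a Segal equivalence. Since $f$ is injective it preserves the dimension of simplices, so $f([\square]) \cap \dtr_{\leq d-1}[F] = f(\dtr_{\leq d-1}[\square])$, and since $f$ is $d$-shaping the identity $[F] = f([\square]) \cup \dtr_{\leq d-1}[F]$ exhibits $[F]$ as the pushout of $f([\square])$ along $f(\dtr_{\leq d-1}[\square]) \rightarrow \dtr_{\leq d-1}[F]$. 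Pushing out $P \hookrightarrow f([\square])$ along $P \hookrightarrow P \cup \dtr_{\leq d-1}[F]$ --- whose pushout is $[F]$, because $f(\dtr_{\leq d-1}[\square]) \subseteq P$ --- then realises $P \cup \dtr_{\leq d-1}[F] \rightarrow [F]$ as a pushout of the displayed Segal equivalence, hence as a Segal equivalence by \ref{prop.pushout-seq}.

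It remains to identify the source $P \cup \dtr_{\leq d-1}[F]$ with $[F \cap K] \cup [F \cap J] \cup \dtr_{\leq d-1}[F]$. As $f(\dtr_{\leq d-1}[\square]) \subseteq \dtr_{\leq d-1}[F]$, the former equals $f([\boxdot]) \cup f([A]) \cup \dtr_{\leq d-1}[F]$, so it suffices to show that $f([\boxdot])$ and $[F \cap K]$ have the same simplices of dimension $d$, and likewise $f([A])$ and $[F \cap J]$, because two simplicial subsets of $[F]$ that agree in dimension $d$ become equal after adjoining $\dtr_{\leq d-1}[F]$. The $J$-side is easy: $f^{-1}(F \cap J) = \phi^{-1}(J) = A$, so a $d$-dimensional simplex of $[F \cap J]$, which by the $d$-shaping property of $f$ is of the form $f\tau'$ with $\tau'$ a simplex of $[\square]$, has $\im(\tau') \subseteq A$, and conversely. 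The $K$-side is the main obstacle, since the defining relation of a map of division pairs only gives $\phi^{-1}(K) \cup \dtr_{\leq d-1}\square = \boxdot \cup \dtr_{\leq d-1}\square$ --- an equality ``modulo $\dtr_{\leq d-1}$''. To promote this I would use that $\phi^{-1}(K)$ and $\boxdot$ are subshapes, hence closed under faces (for $\phi^{-1}(K)$, because $\phi$ is injective), together with the observation that the image of a $d$-dimensional simplex is a standard-grid-shaped subshape in which every box is a face of a non-degenerate $d$-box --- using \ref{lemma.max-grids-boxdot} to keep such a thickening inside $\boxdot$ when the simplex lands in $\boxdot$. This upgrades the relation to ``every non-degenerate $d$-box of $\boxdot$ lies in $\phi^{-1}(K)$'', and then face-closure of $\phi^{-1}(K)$ and of $\boxdot$ forces the entire image of any $d$-dimensional simplex of $[F \cap K]$, resp.\ of $f([\boxdot])$, into $\boxdot$, resp.\ into $F \cap K$. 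Combining this identification with the conclusion of the previous paragraph finishes the proof.
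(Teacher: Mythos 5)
Your proposal is correct and follows essentially the same route as the paper: dispose of the $(d-1)$-truncated and image-in-$K$ cases, then transport the Segal equivalence of \ref{ex.division-pair-boxdot} along the $d$-shaping map $f$ and conclude via \ref{prop.pushout-seq}. The only difference is one of detail: where the paper asserts that the relevant pushout square ``directly follows from the definitions,'' you verify it explicitly (in particular upgrading the mod-$\dtr_{\leq d-1}$ identity $f^{-1}(K)\cup\dtr_{\leq d-1}\square=\boxdot\cup\dtr_{\leq d-1}\square$ to an identification of $d$-dimensional simplices via face-closure and \ref{lemma.max-grids-boxdot}), and this verification is sound.
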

\begin{proof}
	If $F$ is $(d-1)$-truncated or it satisfies condition (1) of 
	\ref{def.fillables}, this is clear, since then the left-hand side is the whole $[F]$ and the inclusion in question is the identity. Thus we may focus on the case that there exists a $d$-shaping map $f : \square[n_1, \dotsc, n_d] \rightarrow F$ (in particular, $f$ is injective) that underlies 
	a map of division pairs $$(\boxdot[i], A[i]) \rightarrow (K,J),$$
	for a suitable injective map $i$. 
	It then directly follows from the definitions that we obtain a pushout square 
	\[
		\begin{tikzcd}[column sep = small]
			{[\boxdot]} \cup [A] \cup \dtr_{\leq d-1}[\square] \arrow[r]\arrow[d] & {[F\cap K]} \cup [F\cap J] \cup \dtr_{\leq d-1}[F] \arrow[d] \\
			{[\square]} \arrow[r] & {[F]}.
		\end{tikzcd}
	\]
	Since the map on the left is a Segal equivalence on account of \ref{ex.division-pair-boxdot}, the map on the right must also be a Segal equivalence.
\end{proof}

The final ingredient for the proof of \ref{thm.division-pair-fillables}, is the following:

\begin{proposition}\label{prop.fillables-intersections}
	The $(K,J)$-fillables for a division pair $(K,J)$ are closed under taking intersections.
\end{proposition}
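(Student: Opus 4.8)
The plan is to make the structure of a $(K,J)$-fillable subshape completely explicit and then run a short case analysis on the two clauses of \ref{def.fillables}, using that intersections of pasting shapes are computed box-wise, i.e. $B^d(F\cap F') = B^d(F)\cap B^d(F')$.

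First I would dispose of the degenerate cases. If $F$ or $F'$ is $(d-1)$-truncated, then $B^d(F\cap F')$ contains no non-degenerate $d$-box, so $F\cap F'$ is $(d-1)$-truncated and hence $(K,J)$-fillable. Thus I may assume $F$ and $F'$ carry $d$-shaping maps $f\colon\square[n_1,\dotsc,n_d]\to F$ and $f'\colon\square[n_1',\dotsc,n_d']\to F'$, so that $F = f(\square[n_1,\dotsc,n_d])\cup\dtr_{\leq d-1}F$ and similarly for $F'$. Since a map of pasting shapes out of a standard grid is determined by its components $f_a\colon[n_a]\to\N$ (as in the proof of \ref{prop.simplices-nerve}) and $f$ is injective, the subshape $f(\square[n_1,\dotsc,n_d])$ is exactly the \emph{full subgrid} of $I$ on $\prod_a\im(f_a)$, namely the set of boxes $(x,y)\in B^d(I)$ with $x_a,y_a\in\im(f_a)$ for all $a$. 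Writing $T_a := \im(f_a)\cap\im(f'_a)$, a box-wise computation then identifies the non-degenerate $d$-boxes of $F\cap F'$ with the non-degenerate $d$-boxes of $I$ all of whose coordinates lie in $\prod_a T_a$, while every degenerate $d$-box of $F\cap F'$ lies in $\dtr_{\leq d-1}(F\cap F')$. If some $T_a$ has at most one element we are again done; otherwise let $h\colon\square[p_1,\dotsc,p_d]\to I$ be the map onto the full subgrid of $I$ on $\prod_a T_a$, where $p_a = |T_a|-1\geq 1$. I would check $h$ is injective, factors through $F\cap F'$, and, by the previous identification, is $d$-shaping onto $F\cap F'$.

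Now the case analysis. Since $h(\square[p_1,\dotsc,p_d])\subseteq f'(\square[n_1',\dotsc,n_d'])$ and likewise with $f$, if $F$ or $F'$ satisfies clause (1) of \ref{def.fillables} then $h$ has image in $K$, so $h$ witnesses $F\cap F'$ fillable via clause (1). The remaining case is that $F$ and $F'$ both satisfy clause (2) — via maps of division pairs $(\boxdot[i],A[i])\to(K,J)$ and $(\boxdot[i'],A[i'])\to(K,J)$ — and $h(\square[p_1,\dotsc,p_d])\not\subseteq K$. Here I would invoke \ref{prop.div-pair-boxes} to describe $J$ through the corners $(\alpha_1,\omega_1),\dotsc,(\alpha_d,\omega_d)$ of the open vertebra $\partial J$ of $I$, namely $J = \{(x,y)\in B^d(I)\mid \alpha_a\leq x_a\leq y_a\leq\omega_a \text{ for all } a\}$. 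This gives $f(\square[n_1,\dotsc,n_d])\cap J = (\iota f)(A[i])$ as the full subgrid on $\prod_a(\im(f_a)\cap[\alpha_a,\omega_a])$, which (together with the second half of the map-of-division-pairs condition, $(\iota f)^{-1}(K)\cup\dtr_{\leq d-1}\square[n_1,\dotsc,n_d]=\boxdot[i]\cup\dtr_{\leq d-1}\square[n_1,\dotsc,n_d]$) pins down how $\square[n_1,\dotsc,n_d]$ sits relative to $J$ near $\partial J$; likewise for $F'$. Intersecting, $h(\square[p_1,\dotsc,p_d])\cap J$ is the full subgrid on $\prod_a(T_a\cap[\alpha_a,\omega_a])$, and the assumption $h(\square[p_1,\dotsc,p_d])\not\subseteq K$ forces each $T_a\cap[\alpha_a,\omega_a]$ to carry a non-degenerate edge, hence determines an injective map $i''\colon\square[1,\dotsc,1]\to\square[p_1,\dotsc,p_d]$ with $(\iota h)(A[i''])=h(\square[p_1,\dotsc,p_d])\cap J$. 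The final step is to verify, again via \ref{prop.div-pair-boxes}, that $(\iota h)^{-1}(J)=A[i'']$ and $(\iota h)^{-1}(K)\cup\dtr_{\leq d-1}\square[p_1,\dotsc,p_d]=\boxdot[i'']\cup\dtr_{\leq d-1}\square[p_1,\dotsc,p_d]$, i.e. that $\iota h$ underlies a map of division pairs $(\boxdot[i''],A[i''])\to(K,J)$, so that $h$ exhibits $F\cap F'$ as $(K,J)$-fillable via clause (2).

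The main obstacle is precisely this last verification: $\boxdot[i'']$ is strictly smaller than the complement of $A[i'']$, so one must show $(\iota h)^{-1}(K)$ agrees with $\boxdot[i'']$ only \emph{after} adjoining the $(d-1)$-truncated boxes, and this requires combining the map-of-division-pairs conditions for $f$ and $f'$ to see that their common refinement $h$ still sits relative to $J$ in the prescribed way. It is here that the ``$\cup\,\dtr_{\leq d-1}$'' in the definition of a map of division pairs is indispensable.
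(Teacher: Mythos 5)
Your overall plan coincides with the paper's: both arguments form the common refinement $h\colon\square[l_1,\dotsc,l_d]\to F\cap F'$ with $\im(h_a)=\im(f_a)\cap\im(f'_a)$ (the paper packages this as \ref{lemma.pullback-standard-grids}), check it is $d$-shaping, dispose of the clause-(1) and degenerate cases, and then must show that $h$ again witnesses fillability via clause (2). The construction of $h$ and the easy cases in your sketch are fine.

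The genuine gap is exactly the step you flag as ``the main obstacle'': you never actually verify that $\iota h$ underlies a map of division pairs $(\boxdot[i''],A[i''])\to(K,J)$, and attempting to check the two conditions of that definition directly for $h$ (in particular that $(\iota h)^{-1}(K)$ agrees with $\boxdot[i'']$ up to $(d-1)$-truncated boxes) is the hard part, not a routine computation. Along the way you also assert without proof that $h(\square)\not\subseteq K$ forces each $T_a\cap[\alpha_a,\omega_a]$ to contain two elements; this is true, but only because of the map-of-division-pairs conditions on $f$ and $f'$, and it needs an argument. The paper closes this gap by \emph{not} verifying the division-pair conditions directly: \ref{lemma.factored-maps} first proves that an injective $f$ underlies such a map if and only if $\max(\min(f_a),\alpha_a)<\min(\max(f_a),\omega_a)$ with both quantities in $\im(f_a)$, for all $a$. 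This purely numerical criterion is what transfers painlessly to the intersection: if it holds for $\im(f_a)$ and $\im(f'_a)$, a short case check (e.g.\ if $\alpha_a\geq\min(h_a)$ then $\alpha_a\geq\min(f_a)$, so $\alpha_a=\max(\min(f_a),\alpha_a)\in\im(f_a)$, and likewise for $f'_a$, whence $\alpha_a\in\im(h_a)$) shows $\max(\min(h_a),\alpha_a)$ and $\min(\max(h_a),\omega_a)$ lie in $\im(h_a)$, and \ref{cor.shaping-fillable} then absorbs the cases where the strict inequality fails (image in $K$, or $(d-1)$-truncated). To complete your proof you would either have to reprove the equivalence of \ref{lemma.factored-maps} inline, or cite it; as written, the decisive implication is assumed rather than established.
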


\begin{lemma}\label{lemma.pullback-standard-grids}
	Suppose that $I$ is a pasting shape and that we have injective maps $f : \square[n_1, \dotsc, n_d] \rightarrow I$ and $g : \square[m_1, \dotsc, m_d] \rightarrow I$ in $I$ with intersecting images, 
	then one can construct a pullback square 
	\[
		\begin{tikzcd}
			\square[l_1, \dotsc, l_d] \arrow[r]\arrow[d] & \square[n_1, \dotsc, n_d] \arrow[d,"f"] \\
			\square[m_1, \dotsc, m_d] \arrow[r, "g"] & I,
		\end{tikzcd}
	\]
	where all arrows appearing in the diagram are inclusions.
\end{lemma}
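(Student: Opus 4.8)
The plan is to reduce everything to the combinatorial description of maps out of standard grids furnished by \ref{prop.simplices-nerve}. First I would use that result to identify $f$ and $g$ with tuples of poset maps $(f_1, \dotsc, f_d)$ and $(g_1, \dotsc, g_d)$, where $f_a : [n_a] \to \N$ and $g_a : [m_a] \to \N$; since $f$ and $g$ are injective, each $f_a$ and $g_a$ is injective, so its image is a finite subset of $\N$ of cardinality $n_a + 1$, resp.\ $m_a + 1$. The key observation is that the image of $f$ has vertex set $\im(f_1) \times \dotsb \times \im(f_d)$ (by the argument in the proof of \ref{prop.simplices-nerve}, one has $f(x) = (f_1(x_1), \dotsc, f_d(x_d))$ on vertices), and likewise for $g$. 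Hence the hypothesis that the images of $f$ and $g$ intersect, which gives a common vertex of the two images, forces $\im(f_a) \cap \im(g_a) \neq \emptyset$ for every index $a$.

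Next I would set $l_a + 1 := \lvert \im(f_a) \cap \im(g_a) \rvert$ and let $h_a : [l_a] \to \N$ be the unique order-embedding with image $\im(f_a) \cap \im(g_a)$. Since $\im(h_a) \subset \im(f_a)$ for all $a$, any box $(x,y)$ with $x_a, y_a \in \im(h_a)$ is a box of $I$ (because $f$ is a well-defined map of pasting shapes), so the tuple $(h_1, \dotsc, h_d)$ lies in $\Hom_I(\coprod_a [l_a], \N)$ and, via \ref{prop.simplices-nerve}, defines an injective map $p : \square[l_1, \dotsc, l_d] \to I$. Because $f_a$ restricts to a bijection onto $\im(f_a) \supset \im(h_a)$, the assignment $\phi_a := f_a^{-1} \circ h_a$ is a well-defined injective poset map $[l_a] \to [n_a]$, and these assemble (again by \ref{prop.simplices-nerve}) to an injective map $\phi : \square[l_1, \dotsc, l_d] \to \square[n_1, \dotsc, n_d]$ with $f \circ \phi = p$; symmetrically one obtains $\psi : \square[l_1, \dotsc, l_d] \to \square[m_1, \dotsc, m_d]$ with $g \circ \psi = p$. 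All four maps in the resulting square are inclusions.

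Finally I would verify that this square is a pullback in $\Shape^d$ by checking the universal property directly on vertices. Given a pasting shape $T$ with maps $u : T \to \square[n_1, \dotsc, n_d]$ and $v : T \to \square[m_1, \dotsc, m_d]$ satisfying $f u = g v$, the equality $f_a(u(t)_a) = g_a(v(t)_a)$ shows this common value lies in $\im(f_a) \cap \im(g_a) = \im(h_a)$, so $w(t)_a := h_a^{-1}(f_a(u(t)_a))$ defines a candidate map $w : T \to \square[l_1, \dotsc, l_d]$. One checks that $w$ is a map of pasting shapes using that each $\phi_a$ is an injective poset map (hence reflects the order and the equalities among coordinates that are recorded by a box), and that $\phi w = u$ and $\psi w = v$; uniqueness follows since $h_a = f_a \circ \phi_a$ forces $w(t)_a = h_a^{-1}(f_a(u(t)_a))$. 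The step I expect to require the most care is precisely this last one — confirming that the coordinate-wise candidate $w$ is a legitimate map of pasting shapes and that the square genuinely enjoys the universal property in the somewhat rigid category $\Shape^d$ — but every point reduces to elementary facts about order-embeddings of finite subsets of $\N$.
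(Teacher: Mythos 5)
Your proposal is correct and constructs exactly the same square as the paper: the pullback grid is built coordinatewise from the intersections $\im(f_a)\cap\im(g_a)$, using the identification of maps out of $\square[n_1,\dotsc,n_d]$ with tuples of poset maps from \ref{prop.simplices-nerve}. The only (cosmetic) difference is in verifying the pullback property: you check the universal property directly in $\Shape^d$ by constructing the mediating map on vertices, whereas the paper transfers the question along the fully faithful nerve functor and checks level-wise that the resulting square of hom-sets is a pullback; both verifications are routine and reduce to the same elementary facts about order-embeddings of finite subsets of $\N$.
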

\begin{proof}
		Since the images of $f$ and $g$ intersect, we get that $\im(f_a) \cap \im(g_a)$ is non-empty for each index $a$. 
		Let $h_a : [l_a] \rightarrow \im(f_a) \cap \im(g_a)$ be the unique isomorphism for each $a$. These give 
		rise to maps $[l_a] \rightarrow \im(f_a) \cap \im(g_a) \rightarrow \im(f_a) \cong [n_a]$, inducing a map 
		$i : \square[l_1, \dotsc, l_d] \rightarrow \square[n_1, \dotsc, n_d]$. Similarly, we obtain a map 
		$j : \square[l_1, \dotsc, l_d] \rightarrow \square[n_1, \dotsc, n_d]$. It can be readily checked that this gives a commutative square 
		\[
			\begin{tikzcd}
				\square[l_1, \dotsc, l_d] \arrow[r, "i"]\arrow[d, "j"'] & \square[n_1, \dotsc, n_d] \arrow[d,"f"] \\
				\square[m_1, \dotsc, m_d] \arrow[r,"g"] & I.
			\end{tikzcd}
		\]
		To check that this square is a pullback square, it suffices to check this on the level of nerves as the nerve functor is fully faithful.
		Thus we must verify that for any $([k_1], \dotsc, [k_d]) \in \Delta^{\times d}$, the following 
		square is a pullback square 
		\[
			\begin{tikzcd}
				\prod_{1 \leq a \leq d} \Delta([k_a], [l_a]) \arrow[r, "\prod i_a \circ -"]\arrow[d, "\prod j_a \circ -"'] & \prod_{1 \leq a \leq d} \Delta([k_a], [n_a])\arrow[d, "\prod f_a \circ -"] \\
				 \prod_{1 \leq a \leq d} \Delta([k_a], [m_a])\arrow[r, "\prod g_a \circ -"] & \Hom_I(\coprod_{1 \leq a \leq d}[k_a], \N),
			\end{tikzcd}
		\]
		and this readily follows from the construction of the maps $i$ and $j$.
\end{proof}

\begin{lemma}\label{lemma.factored-maps}
	Let $(K,J)$ be a division pair for $I$. Suppose that the corners of $J$ are given by $(\alpha_1, \omega_1), \dotsc, (\alpha_d, \omega_d)$.
	Then for an injective map $f : \square[n_1, \dotsc, n_d] \rightarrow I$, the following are equivalent:
	\begin{enumerate}[noitemsep]
		\item $f$ underlies a map of division pairs 
		$$(\boxdot[i], A[i]) \rightarrow (K,J),$$
		\item for all $1 \leq a \leq d$, we have $\max(\min(f_a), \alpha_a) < \min(\max(f_a), \omega_a) \in \im(f_a)$.
	\end{enumerate}
\end{lemma}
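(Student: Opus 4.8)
The plan is to unwind condition (1) into explicit combinatorics on the poset maps $f_a \colon [n_a] \to \N$ and recognize the outcome as (2). By definition $f$ underlies a map of division pairs $(\boxdot[i],A[i]) \to (K,J)$ exactly when $f^{-1}(J) = A[i]$ and $f^{-1}(K) \cup \dtr_{\leq d-1}\square[n_1,\dotsc,n_d] = \boxdot[i] \cup \dtr_{\leq d-1}\square[n_1,\dotsc,n_d]$. First I would compute the two preimages: by \ref{prop.div-pair-boxes} a box $(x,y)$ of $I$ lies in $K$ iff $y_a \leq \alpha_a$ or $x_a \geq \omega_a$ for some $a$, while $J$ is entire with bounding box $(\alpha,\omega)$; hence $f^{-1}(J)$ is the ``full'' pasting shape on the product $\prod_a f_a^{-1}([\alpha_a,\omega_a])$, and $(x,y) \in f^{-1}(K)$ iff $f_a(y_a) \leq \alpha_a$ or $f_a(x_a) \geq \omega_a$ for some $a$. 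Comparing vertex sets, $f^{-1}(J) = A[i]$ is equivalent to $f_a^{-1}([\alpha_a,\omega_a]) = [i_a(0),i_a(1)]$ for all $a$ — a non-empty interval with at least two elements — so $i$, if it exists, is determined by $i_a(0) = \min f_a^{-1}([\alpha_a,\omega_a])$ and $i_a(1) = \max f_a^{-1}([\alpha_a,\omega_a])$.

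The heart of the argument is a rigidity claim: with $i$ so defined, $f^{-1}(K) \cup \dtr_{\leq d-1}\square[n_1,\dotsc,n_d] = \boxdot[i] \cup \dtr_{\leq d-1}\square[n_1,\dotsc,n_d]$ holds if and only if $f_a$ attains $\alpha_a$ unless $i_a(0) = 0$, and attains $\omega_a$ unless $i_a(1) = n_a$ — equivalently $\max(\min(f_a),\alpha_a) \in \im(f_a)$ and $\min(\max(f_a),\omega_a) \in \im(f_a)$ for all $a$. The inclusion $\boxdot[i] \subseteq f^{-1}(K) \cup \dtr_{\leq d-1}\square[n_1,\dotsc,n_d]$ is trivial on degenerate boxes and follows directly from $f_a^{-1}([\alpha_a,\omega_a]) = [i_a(0),i_a(1)]$ on non-degenerate ones; the reverse fails exactly when, say, $i_a(0) \geq 1$ and $\alpha_a \notin \im(f_a)$, since then the box obtained from $i$ by pushing the $a$-th coordinate one step below $i_a(0)$ while keeping every other coordinate strictly inside $\prod_b f_b^{-1}([\alpha_b,\omega_b])$ is non-degenerate, lies in $\boxdot[i]$, but its image under $f$ evades every defining inequality of $K$. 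I would moreover observe that when (2) is assumed the same box construction, now compared against $I = K \cup J$, forbids $f$ from ``jumping over'' a corner hyperplane of $J$, which forces precisely those image-membership statements; this is what makes $(2) \Rightarrow (1)$ go through.

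Granting the rigidity claim, the equivalence is a translation. For $(1) \Rightarrow (2)$: injectivity of $i$ gives $i_a(0) < i_a(1)$, and the membership facts identify $f_a(i_a(0)) = \max(\min(f_a),\alpha_a)$ and $f_a(i_a(1)) = \min(\max(f_a),\omega_a)$, so $\max(\min(f_a),\alpha_a) < \min(\max(f_a),\omega_a) \in \im(f_a)$, which is (2). For $(2) \Rightarrow (1)$: define $i$ as above; (2) together with the rigidity claim makes $i_a(0),i_a(1)$ well defined with $i_a(0) < i_a(1)$, so $i$ is an injective map $\square[1,\dotsc,1] \to \square[n_1,\dotsc,n_d]$; then $f^{-1}(J) = A[i]$ by the vertex-set description and $f^{-1}(K) \cup \dtr_{\leq d-1}\square[n_1,\dotsc,n_d] = \boxdot[i] \cup \dtr_{\leq d-1}\square[n_1,\dotsc,n_d]$ by the rigidity claim, so $f$ underlies the desired map of division pairs.

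I expect the main obstacle to be the rigidity claim, and within it the construction of the offending box: one must choose which coordinate to straddle and which to keep inside the central block so that the image lands neither in $K$ (tested against the inequalities $y_c \leq \alpha_c$ and $x_c \geq \omega_c$ for all $c$) nor in $J$, and one must verify that this box genuinely exists inside $\square[n_1,\dotsc,n_d]$ — which entails juggling side conditions such as $i_a(0) \geq 1$, $i_b(0) < i_b(1)$, and the several cases of where $\alpha_a$ and $\omega_a$ sit relative to $\min(f_a)$ and $\max(f_a)$. Everything else is formal unwinding of the definitions of $\boxdot[i]$, $A[i]$, and ``map of division pairs''.
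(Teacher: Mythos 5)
Your strategy is essentially the paper's: unwind the definition of a map of division pairs into the two preimage conditions, use \ref{prop.div-pair-boxes} to characterize membership in $K$ and $J$, observe that $f^{-1}(J)=A[i]$ forces $i_a(0)=\min f_a^{-1}([\alpha_a,\omega_a])$ and $i_a(1)=\max f_a^{-1}([\alpha_a,\omega_a])$, and then obtain the membership statements by exhibiting an explicit non-degenerate $d$-box straddling a corner hyperplane of $J$. The substance is correct, and your offending box ($x_a=i_a(0)-1$, $y_a=i_a(0)$, $(x_b,y_b)=(i_b(0),i_b(1))$ for $b\neq a$) does exactly what is needed; it plays the role of the paper's box with $x=0$, $y_a=i_a(0)$, $y_b=n_b$.

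You do, however, have the two inclusions in your rigidity claim swapped. The inclusion that holds unconditionally is $f^{-1}(K)\subseteq\boxdot[i]$ on non-degenerate $d$-boxes: if $f_a(y_a)\leq\alpha_a$ then $y_a\leq i_a(0)$ by monotonicity, since every element of $f_a^{-1}([\alpha_a,\omega_a])=[i_a(0),i_a(1)]$ has $f_a$-value at least $\alpha_a$. The inclusion that does \emph{not} ``follow directly'' is $\boxdot[i]\subseteq f^{-1}(K)\cup\dtr_{\leq d-1}\square[n_1,\dotsc,n_d]$: a non-degenerate box with $y_a=i_a(0)\geq 1$ has $f_a(y_a)=f_a(i_a(0))$, which exceeds $\alpha_a$ precisely when $\alpha_a\notin\im(f_a)$, and then index $a$ fails to witness membership in $K$. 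Your offending box lies in $\boxdot[i]$ and not in $f^{-1}(K)$, so it witnesses failure of this \emph{forward} inclusion, not of ``the reverse'' as you write. (Its image is a box of $I$ lying in neither $K$ nor $J$; this is consistent with $I=K\cup J$ because the union of pasting shapes is closed under joins.) Once the attributions are corrected the argument closes: under (2) the forward inclusion holds because $f_a(i_a(0))=\max(\min f_a,\alpha_a)$ and $f_a(i_a(1))=\min(\max f_a,\omega_a)$, the reverse is automatic, and the equivalence follows. Relatedly, the remark that under (2) the box construction ``forces precisely those image-membership statements'' is backwards: under (2) those statements are hypotheses, and the forcing argument belongs to the direction $(1)\Rightarrow(2)$.
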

\begin{proof}
	First, suppose that (1) is satisfied so that we have an injective map 
	$$
	i : \square[1,\dotsc, 1]\rightarrow \square[n_1,\dotsc, n_d]
	$$
	with the property that $fi$ has image in $J$ and $f$ carries every non-degenerate $d$-box of $\boxdot[i]$ to a non-degenerate 
	$d$-box of $K$.
	Define $\alpha_a' := f_a(i_a(0))$ and $\omega_a' := f_a(i_a(1))$. From \ref{prop.div-pair-boxes},
	it follows that 
	$\alpha_a \leq \alpha_a' < \omega_a' \leq \omega_a$.    
	If $f_a(0) < \alpha_a$, then the box $(x,y)$ given by $x_b = 0$ for all $b$ and $y_b = n_b$ if $a \neq b$ and $y_a = i_a(0)$, 
	is a non-degenerate $d$-box in $M^-_a \subset \boxdot$. 
	Hence $(f(x), f(y))$ is contained in $K$. Again by \ref{prop.div-pair-boxes},
	we necessarily must have that $f(y)_a = \alpha'_a \leq \alpha_a$.
 Hence $\alpha_a = \alpha_a'$.  Thus we deduce that $\max(f_a(0), \alpha_a) \in \im(f_a)$. 
	Similarly, 
	one deduces that $\min(f_a(n_a), \omega_a) \in \im(f_a)$. 

	Conversely, suppose that (2) is satisfied.
	Let $$i:\square[1,\dotsc,1] \rightarrow \square[n_1,\dotsc, n_d]$$ be the injective inclusion that satisfies $$f_a(i_a(0)) = \max(f_a(0), \alpha_a) <  \min(f_a(n_a),\omega_a) = f_a(i_a(1))$$ for all $a$. 
	If $(x,y)$ is a box in $\square$, then  $(f(x), f(y))$ is contained in $J$ if and only if $i_a(0) \leq x_a, y_a \leq i_a(1)$ for all $a$ on account of \ref{prop.div-pair-boxes}.
	Hence, $f^{-1}(J) = A[i]$. Let $(x,y)$ be a non-degenerate $d$-box of $\square$. Then we have to show 
	that $(f(x), f(y))$ is in $K$ if and only if $(x,y)$ is in $\boxdot$. If $(x,y) \in \boxdot$, then there 
	exists some index $a$ so that $x_a < y_a \leq i_a(0)$ (or $i_a(1) \leq x_a < y_a$, but this case is handled similarly). Thus $f_a(x_a) < f_a(y_a) \leq f_a(i_a(0))$. Hence, $f_a(i_a(0)) \neq f_a(0)$, so that we must have $f_a(i_a(0)) = \alpha_a$. Thus $(f(x), f(y)) \in K$. The converse implication is shown similarly, using \ref{prop.div-pair-boxes} and the assumption that the $f_a$'s are injective.
\end{proof}

\begin{corollary}\label{cor.shaping-fillable}
Let $(K,J)$ be a division pair  for $I$. Suppose that the corners of $J$ are given by $(\alpha_1, \omega_1), \dotsc, (\alpha_d, \omega_d)$. Let $F \subset I$ be a subshape that comes with a $d$-shaping map $f : \square[n_1, \dotsc, n_d] \rightarrow F$. 
Then $F$ is $(K,J)$-fillable if for each index $1 \leq a \leq d$, we have that $\max(\min(f_a), \alpha_a)$ and $\min(\max(f_a), \omega_a)$ are in the image of $f_a$.
\end{corollary}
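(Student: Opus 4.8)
The plan is to reduce the statement to \ref{lemma.factored-maps} and \ref{prop.div-pair-boxes}, via a case split according to whether the relevant intervals overlap. First I would dispose of the trivial case: if $F$ is $(d-1)$-truncated it is $(K,J)$-fillable by \ref{def.fillables}, so I may assume it is not. Since a map of pasting shapes collapses a coordinate of a box whenever the corresponding box coordinate is degenerate, a $d$-shaping map whose source $\square[n_1,\dotsc,n_d]$ has some $n_a=0$ has image containing no non-degenerate $d$-box, which would force $F$ to be $(d-1)$-truncated; hence $n_1,\dotsc,n_d\neq 0$. Moreover, since $f$ is $d$-shaping, every non-degenerate $d$-box of $F$ has the form $(f(x),f(y))$ with $x_b<y_b$ for all $b$, so each monotone map $f_a\colon[n_a]\to\N$ is non-constant, i.e.\ $\min(f_a)=f_a(0)<f_a(n_a)=\max(f_a)$. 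I also record that $\alpha_a<\omega_a$ for all $a$, since the corners of $J$ are those of the open vertebra $\partial J=K\cap J$ and therefore form a non-degenerate box.

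Suppose first that $\max(\min(f_a),\alpha_a)<\min(\max(f_a),\omega_a)$ for every index $a$. Together with the hypothesis that both $\max(\min(f_a),\alpha_a)$ and $\min(\max(f_a),\omega_a)$ lie in $\im(f_a)$, this is exactly condition (2) of \ref{lemma.factored-maps}. Hence the composite $\square[n_1,\dotsc,n_d]\xrightarrow{f}F\to I$ underlies a map of division pairs $(\boxdot[i],A[i])\to(K,J)$ for a suitable injective $i\colon\square[1,\dotsc,1]\to\square[n_1,\dotsc,n_d]$, so $F$ is $(K,J)$-fillable by condition (2) of \ref{def.fillables}.

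Otherwise $\max(\min(f_a),\alpha_a)\geq\min(\max(f_a),\omega_a)$ for some index $a$; fix it and put $p:=f_a(0)<f_a(n_a)=:q$, so $\im(f_a)\subseteq[p,q]$. If $\alpha_a\geq p$ then $\max(\min(f_a),\alpha_a)=\alpha_a\in\im(f_a)$, whence $\alpha_a\leq q$; thus either $\alpha_a<p$ or $p\leq\alpha_a\leq q$, and symmetrically either $\omega_a>q$ or $p\leq\omega_a\leq q$. Among the four combinations, $(\alpha_a<p,\ \omega_a>q)$ would force $p\geq q$ and $(p\leq\alpha_a\leq q,\ p\leq\omega_a\leq q)$ would force $\alpha_a\geq\omega_a$, both impossible; so the only surviving possibilities are $\alpha_a<p$ with $p\leq\omega_a\leq q$, which together with the case hypothesis gives $\omega_a=p$, or $p\leq\alpha_a\leq q$ with $\omega_a>q$, which gives $\alpha_a=q$. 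If $\omega_a=p=f_a(0)$, then $f(x)_a=f_a(x_a)\geq f_a(0)=\omega_a$ for every vertex $x$ of $\square[n_1,\dotsc,n_d]$, so every box $(x,y)$ has $(f(x),f(y))\in K$ by \ref{prop.div-pair-boxes}(2); if $\alpha_a=q=f_a(n_a)$, then $f(y)_a=f_a(y_a)\leq f_a(n_a)=\alpha_a$ for every vertex $y$, with the same conclusion. In either subcase $f$ has image in $K$, so $F$ is $(K,J)$-fillable by condition (1) of \ref{def.fillables}.

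The only delicate point is the last case analysis: one must observe that the hypothesis, namely that \emph{both} $\max(\min(f_a),\alpha_a)$ and $\min(\max(f_a),\omega_a)$ lie in $\im(f_a)$, rules out precisely the configurations in which $\im(f_a)$ and $[\alpha_a,\omega_a]$ are disjoint, so that the failure of the strict inequality in \ref{lemma.factored-maps}(2) can only occur when an endpoint of $[\alpha_a,\omega_a]$ coincides with an endpoint of $\im(f_a)$, and this immediately forces the whole image of $f$ into $K$ by \ref{prop.div-pair-boxes}. Everything else is a routine unwinding of the definitions of $d$-shaping maps and of $(K,J)$-fillability.
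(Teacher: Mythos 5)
Your proof is correct and follows essentially the same route as the paper's: both argue that if $\max(\min(f_a),\alpha_a)<\min(\max(f_a),\omega_a)$ for all $a$ then condition (2) of \ref{lemma.factored-maps} applies, and otherwise the image of $f$ lands in $K$ (or $F$ is $(d-1)$-truncated), so one of the two clauses of \ref{def.fillables} is met. The paper's version is just a three-sentence compression of your case analysis; your expansion of why the degenerate case forces $\omega_a=\min(f_a)$ or $\alpha_a=\max(f_a)$ and hence image in $K$ via \ref{prop.div-pair-boxes} is exactly the omitted detail.
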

\begin{proof}
Let us write $\alpha'_a := \max(\min(f_a), \alpha_a)$ and  $\omega_a' := \min(\max(f_a), \omega_a)$. 
If $\alpha'_a \geq \omega'_a$ for some $a$, then we must have that the image of $f$ is either in $K$, or $f_a(0) = f_a(n_a)$, in which case $F$ is $(d-1)$-truncated. Otherwise, 
we deduce that $f$ meets condition (2) of \ref{lemma.factored-maps}.
\end{proof}
\begin{proof}[Proof of \ref{prop.fillables-intersections}]	
	If $F$ and $F'$ have empty intersection, or one of these subshapes is $(d-1)$-truncated, then we are done. Consequently,
	we may assume there exist $d$-shaping maps $f : \square[n_1, \dotsc, n_d] \rightarrow I$ and $g : \square[m_1, \dotsc, m_d] \rightarrow I$ 
	that witness, respectively, $F$ and $F'$ to be $(K,J)$-fillable, and which have intersecting images.
	On account of \ref{lemma.pullback-standard-grids}, there is a pullback square 
	\[
		\begin{tikzcd}
			\square[l_1, \dotsc, l_d] \arrow[r]\arrow[d] & \square[n_1, \dotsc, n_d] \arrow[d] \\
			\square[m_1, \dotsc, m_d] \arrow[r] & I,
		\end{tikzcd}
	\]
	where all maps are injective.
	Note that this implies that the map
	$$
	h : \square[l_1,\dotsc, l_d] \rightarrow F \cap F'
	$$
	is $d$-shaping. Indeed, suppose that $\square[1,\dotsc, 1] \rightarrow F \cap F'$ is an injective map. Then 
	the composite $\square[1, \dotsc, 1] \rightarrow F \cap F' \rightarrow I$ factors through $f$ and $g$ by assumption, 
	yielding a commutative square
	\[
		\begin{tikzcd}
			\square[1,\dotsc,1] \arrow[r]\arrow[d] & \square[n_1,\dotsc,n_d] \arrow[d] \\
			\square[m_1, \dotsc, m_d] \arrow[r] & I,
		\end{tikzcd}
	\]
	thus the desired factorization $\square[1, \dotsc, 1] \rightarrow \square[l_1, \dotsc, l_d]$ exists in view of the universal property of the pullback.
	
	If $f$ or $g$ has image in $K$, then $h$ has so as well and we are done. Otherwise, condition (2) of \ref{lemma.factored-maps} is met 
	for both $f$ and $g$. Recall that by construction, 
	$$\im(h_a) = \im(f_a) \cap \im(g_a).$$
	One now readily checks that $\max(\min(h_a), \alpha_a)$ and  $\min(\max(h_a), \omega_a)$ are in the image of $h_a$ again, where the $(\alpha_a,\omega_a)$'s are as in the statement of the cited lemma.  
	Thus \ref{cor.shaping-fillable} now assures that $h$ again witnesses $F \cap F'$ to be $(K,J)$-fillable.
\end{proof}

We may now assemble our results to prove \ref{thm.division-pair-fillables}:

\begin{proof}[Proof of \ref{thm.division-pair-fillables}]
The inclusion $$\textstyle [K] \cup [J] \cup \dtr_{\leq d-1}[I] \rightarrow \bigcup_{i=1}^n [F_i] \cup [K] \cup [J] \cup \dtr_{\leq d-1} [I]$$ 
is a pushout of the map 
$$
\textstyle \bigcup_{i=1}^n [F_i \cap K] \cup [F_i \cap J] \cup {\dtr}_{\leq d-1}[F_i] \rightarrow \bigcup_{i=1}^n [F_i].
$$
Thus it suffices to show that the latter inclusion is a Segal equivalence.
This follows if we can show that the set $S$ of pairs of subsets of $[I]$ consisting of
$$
([F\cap K] \cup [F \cap J] \cup {\dtr}_{\leq d-1}[F], [F]),
$$
where $F \subset I$ is $(K,J)$-fillable,
 satisfies the conditions (1) and (2) of \ref{lemma.unions-seq}. But condition (2) is precisely \ref{prop.division-pair-fillables-base} and 
condition (1) follows from \ref{prop.fillables-intersections}.
\end{proof}

We conclude by demonstrating how \ref{cor.division-pair-good} follows from \ref{thm.division-pair-fillables}, which then completes the proof 
of all ingredients that went into the proof of pasting theorem \ref{thm.pasting-theorem}.

\begin{proof}[Proof of \ref{cor.division-pair-good}]
	Let $\mathscr{F}$ be the set of all subshapes of $I$ which are $(K,J)$-fillable. In light of \ref{prop.division-pair-spine} and \ref{thm.division-pair-fillables}, it is enough to show 
	that
	$$\textstyle [I] =\bigcup_{F \in \mathscr{F}} [F] \cup  [K] \cup [J]  \cup \dtr_{\leq d-1}[I].$$
	To this end, we have to show any injective map $f : \square[m_1, \dotsc, m_d] \rightarrow I$ with $m_1, \dotsc,m_d\neq 0$ has image 
	in either $K$, $J$, or a fillable $F \in \mathscr{F}$.
	By assumption, it suffices to check this for an extension $g : \square[n_1, \dotsc, n_d] \rightarrow I$ of $f$
	such that for each index $a$, $\im(f_a) \subset \im(g_a)$, and if 
	$g_a(0) \leq \alpha_a \leq g_a(n_a)$ or $g_a(0) \leq \omega_a \leq g_a(n_a)$ then, respectively, $\alpha_a \in \im(g_a)$ or $\omega_a \in \im(g_a)$. 
	If $\max(g_a(0), \alpha_a) \geq \min(g_a(n_a),\omega_a)$ for some index $a$, then $g$ must have image in $K$. 
	Hence, we may assume that $\max(g_a(0), \alpha_a) < \min(g_a(n_a),\omega_a)$ for all $a$. In this case, it follows from 
	the properties of the extension $g$ that $\max(g_a(0), \alpha_a), \min(g_a(n_a),\omega_a) \in \im(g_a)$. 
	Thus the image of $g$ is $(K,J)$-fillable in light of \ref{lemma.factored-maps}.
\end{proof}

\section{Outlook: an \texorpdfstring{$(\infty,d)$}{(∞,d)}-categorical pasting theorem}\label{section.outlook}
 
Recall that $d$-uple Segal spaces may be used to model $(\infty,d)$-categories. For $d=1$, this model is due to Rezk \cite{RezkSeg}, and the generalization 
to higher dimensions $d > 1$ is due to Barwick \cite{BarwickPhD}. The pasting theorem that we have proven in this article, may thus 
be specialized to a pasting theorem for $(\infty,d)$-categories. In this final section, we sketch an idea of how this can be achieved. 

Following \cite{Haugseng}, let us first 
recall Barwick's model of $(\infty,d)$-categories via $d$-uple Segal spaces. A feature of $d$-uple Segal spaces is its many different directions of cells: there 
are $d \choose k$ directions for $k$-cells. However, a $d$-uple Segal space that models an $(\infty,d)$-category should only have non-trivial 
$k$-cells in one preferred direction. This is captured in the following definition:

\begin{definition}
Every (1-uple) Segal space is by definition a \textit{1-fold Segal space}. Inductively, a $d$-uple Segal space $X$ 
is called a \textit{$d$-fold Segal space} if $X_{0, \bullet, \dotsc, \bullet}$ is essentially constant and additionally, $X_{1, \bullet, \dotsc, \bullet}$ is a $(d-1)$-fold Segal space. We denote 
the full subcategory of $\Cat^d(\S)$ spanned by the $d$-fold Segal space by $\Seg^d(\S)$.
\end{definition}

It follows that the non-trivial $k$-cells of a $d$-fold Segal space $X$ are concentrated in $X_{1,\dotsc, 1, 0, \dotsc, 0}$. By definition, equivalences 
in $\Seg^d(\S)$ are level-wise equivalences, and one can show that there are
 $d$-categorical equivalences (see \cite[Remark 7.19]{Haugseng}) between $d$-uple Segal spaces that are not level-wise equivalences. 
To remedy this, one needs to impose a further \textit{completeness} 
condition:

\begin{definition}
A Segal space $X$ is said to be \textit{complete} if the commutative square
\[
	\begin{tikzcd}
		X_0 \arrow[r]\arrow[d] & X_3 \arrow[d, "{(\{0 \leq 2\}^*, \{1\leq 3\}^*)}"] \\ 
		X_0 \times X_0 \arrow[r] & X_1 \times X_1
	\end{tikzcd}
\]
of spaces is a pullback square. Here the bottom and top arrows are degeneracy maps, and the left arrow is the diagonal. Inductively, a $d$-fold Segal space 
$X$ is called \textit{complete} if $X_{\bullet, 0, \dotsc, 0}$ and $X_{1, \bullet, \dotsc, \bullet}$ are complete $1$- and $(d-1)$-fold Segal spaces, respectively.
\end{definition}

\begin{remark}
	In the case that $d = 1$, the definition of complete Segal spaces is due to Rezk \cite{RezkSeg}.
	For $d > 1$, the definition of completeness coincides with Barwick's original criterion \cite{BarwickPhD} (see also \cite[Definition 14.1]{BarwickSchommerPries})  on account of \cite[Lemma 7.22]{Haugseng}.
\end{remark}

It is a celebrated result of Joyal and Tierney that complete Segal spaces  model  $\infty$-categories:

\begin{theorem}[\cite{JoyalTierney}]\label{thm.joyaltierney}
	The restriction of the Yoneda embedding $$\Cat_\infty \rightarrow \fun(\Delta^\op, \S) : \C \mapsto ([n] \mapsto \map_{\Cat_\infty}([n], \C))$$
	is an equivalence onto the full subcategory of $\fun(\Delta^\op, \S)$ spanned by the complete Segal spaces.
\end{theorem}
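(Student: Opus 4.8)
The plan is to analyze the functor $N \colon \Cat_\infty \to \fun(\Delta^\op, \S)$, $N(\C) := ([n] \mapsto \map_{\Cat_\infty}([n], \C))$, in three stages: full faithfulness, inclusion of the essential image into the complete Segal spaces, and essential surjectivity onto that class. The one nontrivial input for the first stage is the standard fact that $\Delta$ generates $\Cat_\infty$ under colimits, i.e.\ that for every $\infty$-category $\C$ the canonical comparison $\colim_{([n] \to \C) \in \Delta_{/\C}} [n] \to \C$ is an equivalence. Combined with the standard description of mapping spaces in the presheaf $\infty$-category $\fun(\Delta^\op, \S)$ as a limit over the $\infty$-category of elements, this yields, naturally in $\infty$-categories $\C, \D$,
\[
\map_{\fun(\Delta^\op,\S)}(N\C, N\D) \;\simeq\; \lim_{([n] \to \C) \in \Delta_{/\C}} \map_{\Cat_\infty}([n], \D) \;\simeq\; \map_{\Cat_\infty}(\C, \D),
\]
which is the claim.

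Next I would check that each $N\C$ is a complete Segal space. The Segal condition holds because the spine inclusion realizes $[n]$ as the iterated pushout $[1] \sqcup_{[0]} \dotsb \sqcup_{[0]} [1]$ in $\Cat_\infty$, so the limit-preserving functor $\map_{\Cat_\infty}(-, \C)$ sends the Segal map of $N\C$ to an equivalence. For completeness I would use Rezk's reformulation of the condition appearing in the definition above: a Segal space $X$ is complete exactly when restriction along $\{0\} \hookrightarrow E$ is an equivalence $\map_{\fun(\Delta^\op,\S)}(E, X) \xrightarrow{\ \sim\ } X_0$, where $E$ is the (discrete) simplicial space underlying the nerve of the free-living isomorphism $\{0 \cong 1\}$. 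For $X = N\C$, writing $E$ as the colimit of its simplices and invoking the full faithfulness just established, this map is identified with the map $\map_{\Cat_\infty}([0], \C) \to \map_{\Cat_\infty}(\{0 \cong 1\}, \C)$ induced by $\{0 \cong 1\} \to [0]$; since the latter is an equivalence in $\Cat_\infty$, $N\C$ is complete.

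Finally, essential surjectivity. Since $\Cat_\infty$ and $\fun(\Delta^\op, \S)$ are presentable and $N$ preserves limits (computed pointwise in the target, with each $\map_{\Cat_\infty}([n], -)$ limit-preserving) and filtered colimits (each $[n]$ is a compact object of $\Cat_\infty$), $N$ admits a left adjoint $L'$; being fully faithful, $N$ then exhibits its essential image as a reflective subcategory of $\fun(\Delta^\op, \S)$ with reflector $L'$. It remains to show that every complete Segal space $X$ lies in this image, i.e.\ that the unit $X \to N L' X$ is an equivalence — and this is the step that is not formal. I would prove it by constructing directly the \emph{homotopy $\infty$-category} $\mathrm{ho}(X)$ of a complete Segal space $X$ (objects the points of $X_0$, mapping spaces the fibers of $X_1 \to X_0 \times X_0$, composition and its higher coherences read off from the Segal equivalences) and verifying, \emph{crucially using completeness}, that $N(\mathrm{ho}(X)) \simeq X$ naturally, whence $L' X \simeq \mathrm{ho}(X)$ and $X$ lies in the image of $N$; alternatively one imports the Quillen equivalence of \cite{JoyalTierney} between the Joyal model structure on $\sSet$ and Rezk's model structure for complete Segal spaces and passes to underlying $\infty$-categories. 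I expect the round-trip identification $N(\mathrm{ho}(X)) \simeq X$ to be the main obstacle — in particular the verification that completeness of $X$ is exactly what forces $X_0$ to agree with the space of objects and equivalences recorded by $\mathrm{ho}(X)$; the two preceding steps are essentially formal once the generation-under-colimits fact is granted.
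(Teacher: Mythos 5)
The paper does not actually prove this statement: it is imported verbatim from Joyal--Tierney, and the citation \cite{JoyalTierney} is the entirety of its ``proof''. So the comparison can only be between your sketch and the standard arguments in the literature.

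Your first two stages are the usual formal argument and are essentially correct once one grants the density of $\Delta$ in $\Cat_\infty$, which you rightly flag as the one substantive input (note also that you silently replace the paper's completeness condition --- the pullback square against $X_3$ --- by the equivalent formulation via the walking isomorphism $E$; this equivalence holds for Segal spaces but deserves a word). The genuine gap is in the third stage. Having reduced essential surjectivity to showing that the unit $X \rightarrow NL'X$ is an equivalence for every complete Segal space $X$, you propose to ``construct directly'' a homotopy $\infty$-category $\mathrm{ho}(X)$ with ``composition and its higher coherences read off from the Segal equivalences'' and then to verify $N(\mathrm{ho}(X)) \simeq X$. But producing a coherently associative composition --- that is, an actual object of $\Cat_\infty$ --- from the Segal maps, and then identifying its nerve with $X$ using completeness, is precisely the content of the Joyal--Tierney theorem; it cannot be ``read off'', and no argument is supplied. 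Your stated fallback, importing the Quillen equivalence of \cite{JoyalTierney}, is not a fallback but a citation of the very result being proved --- which, to be fair, is exactly what the paper itself does. As written, the proposal correctly isolates where the difficulty lives but does not close it.
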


In \cite{BarwickSchommerPries}, Barwick and Schommer-Pries present an axiomatization of the $\infty$-category of $(\infty,d)$-categories and show 
that the full subcategory of $\Seg^d(\S)$ spanned by the complete $d$-fold Segal spaces, satisfies these axioms. Henceforth, we will 
write $\Cat_{(\infty,d)}$ for this subcategory. Direct comparisons with a selection of other models for $(\infty,d)$-categories can be found in \cite{BergnerRezk1}, \cite{BergnerRezk2}, and \cite{OzornovaRovelli} accompanied by \cite{Loubaton}. 

\begin{proposition}
	The inclusion of the full subcategory
	$$
	 \Seg^d(\S) \rightarrow \Cat^d(\S)
	$$
	admits a left adjoint. In other words, $\Seg^d(\S)$ is a reflective subcategory of $\Cat^d(\S)$.
\end{proposition}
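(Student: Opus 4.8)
The plan is to exhibit $\Seg^d(\S)$ as the subcategory of local objects for a small set of maps in the presentable $\infty$-category $\Cat^d(\S)$, and then invoke the reflective-localization machinery of \cite[Proposition 5.5.4.15]{HTT} exactly as in the proof that $\Cat^d(\C)$ is reflective in $\fun(\Delta^{\op,\times d},\C)$. First I would observe that $\Cat^d(\S)$ is presentable: it is a left-exact (indeed accessible) localization of the presentable $\infty$-category $\fun(\Delta^{\op,\times d},\S)$, as established earlier in the excerpt. So it suffices to find a set $S$ of morphisms of $d$-uple Segal spaces whose $S$-local objects are precisely the $d$-fold Segal spaces.

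The defining conditions of a $d$-fold Segal space are inductive, so the set $S$ will be assembled by induction on $d$. In the base case $d=1$ there is nothing to do. For the inductive step, the condition ``$X_{0,\bullet,\dots,\bullet}$ is essentially constant'' is the requirement that $X$ be local with respect to the maps obtained by applying the functoriality in the dimension (the adjunction $(p_h^*, j_h^*)$ of \ref{prop.cat-k-d-incl}, composed with the tensoring $-\otimes x$ over a set of compact generators $x$ of $\S$, exactly as in the proof of the reflectivity of $\Cat^d$) that collapse the first simplicial direction at level $0$; concretely these are pushouts/tensors of the maps $\square[0,0,\dots,0]\to\square[0,n_2,\dots,0]$ detecting that $X_{0,\bullet,\dots}\to X_{0,0,\dots}$ is an equivalence. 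The condition ``$X_{1,\bullet,\dots,\bullet}$ is a $(d-1)$-fold Segal space'' is handled by taking the inductively-constructed set $S_{d-1}$ of generating maps for $\Seg^{d-1}(\S)\subset\Cat^{d-1}(\S)$, transporting each such map $u\colon A\to B$ of $(d-1)$-uple simplicial spaces along the fully faithful $p_h^*$ (for $h$ the inclusion of the last $d-1$ coordinates) and then precomposing with the functor $([n_2],\dots,[n_d])\mapsto([1],[n_2],\dots,[n_d])$ so that the maps detect the $(d-1)$-fold Segal condition on $X_{1,\bullet,\dots,\bullet}$. One must also throw in the maps witnessing that $X_{1,\bullet,\dots}$ is even a $(d-1)$-uple categorical object when restricted, but those are already among the Segal maps that hold since $X\in\Cat^d(\S)$, so no new maps are needed there. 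Taking $S_d$ to be the (small) union of these families, a $d$-uple Segal space $X$ is $S_d$-local if and only if it is a $d$-fold Segal space, by unwinding the definition and using that $p_h^*$ is fully faithful and preserves the mapping spaces in question.

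Having produced the small set $S_d$, I would conclude: by \cite[Proposition 5.5.4.15]{HTT} the full subcategory of $S_d$-local objects of the presentable $\infty$-category $\Cat^d(\S)$ is reflective, with reflector given by $S_d$-localization; and by the previous paragraph this subcategory is $\Seg^d(\S)$. Hence the inclusion $\Seg^d(\S)\to\Cat^d(\S)$ admits a left adjoint. The main obstacle I expect is purely bookkeeping: verifying carefully that the transported maps really do cut out the ``essentially constant'' and ``inductively $(d-1)$-fold'' conditions on the nose — i.e.\ that being local with respect to the chosen generators is equivalent to, and not merely implied by, the inductive definition — and checking that all the constructions stay within a single universe so that $S_d$ is genuinely a set. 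Neither is deep, but both require threading the functoriality-in-$d$ adjunctions of \ref{prop.cat-k-d-incl} through the tensoring construction used in the earlier reflectivity proof.
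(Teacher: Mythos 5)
Your proposal is correct and is essentially the paper's argument: the paper's proof is a one-line deferral to the proof of \cite[Lemma 14.2]{BarwickSchommerPries}, which is exactly the localization argument you describe (exhibiting $d$-fold Segal spaces as the local objects for a small set of maps in the presentable $\infty$-category $\Cat^d(\S)$, built inductively from the essential-constancy and iterated Segal conditions, and then applying \cite[Proposition 5.5.4.15]{HTT}). Your sketch just spells out the details that the paper leaves to the citation.
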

\begin{proof}
	This follows from the same arguments as in the proof of\cite[Lemma 14.2]{BarwickSchommerPries}.
\end{proof}

We may now consider the composite functor 
$$
\{-\} : \Shape^d \xrightarrow{[-]} \Cat^d(\S) \rightarrow \Seg^d(\S),
$$
where the latter functor is the left adjoint to the inclusion $\Seg^d(\S) \subset \Cat^d(\S)$. In order to get a meaningful pasting theorem for $(\infty,d)$-categories, 
one should compute this functor. If $I$ is a $d$-dimensional pasting shape, we expect that the coherence data of a map of $d$-uple Segal spaces from the nerve $[I]$ to a  $d$-fold Segal space, 
is encoded by a complete and discrete $d$-fold Segal space associated to $I$. The simplices of this $d$-fold Segal space 
should admit an explicit description that is reminiscent of \ref{prop.simplices-nerve}. This would then imply the following:

\begin{conjecture}\label{conj.categorical-nerve}
	Let $I$ be a $d$-dimensional pasting shape. Then the $d$-categorical nerve $\{I\}$ of $I$ is a complete and discrete $d$-fold Segal space.
\end{conjecture}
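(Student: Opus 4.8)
The plan is to produce an explicit \emph{discrete} $d$-fold Segal set $N(I)$, together with a natural comparison map $[I]\to N(I)$, and then to establish two facts: that $N(I)$ is complete, and that the comparison map is carried to an equivalence by the reflection $\ell\colon\Cat^d(\S)\to\Seg^d(\S)$. Granting both, $\{I\}=\ell[I]\simeq N(I)$ is complete and discrete. The object $N(I)$ should be the nerve of a gaunt $d$-categorical set $c(I)\in\Cat^d(\cat{Set})$ attached to $I$, built recursively: $c(I)_{0,\bullet,\dots,\bullet}$ is the constant $(d-1)$-uple simplicial set on the quotient of $V(I)$ by the equivalence relation generated by the $1$-boxes of $I$ pointing in directions $2,\dots,d$, and $c(I)_{n_1,\dots,n_d}$ is defined analogously for $n_1\geq 1$ by recursion on the first coordinate. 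Its simplices should then admit a flag-type description reminiscent of \ref{prop.simplices-nerve}: tuples $(f_1,\dots,f_d)$ of monotone maps $f_a\colon[n_a]\to\N$ satisfying the box condition of that proposition, with the $f_a$ for $a\geq 2$ recorded only up to the identifications imposed by the folding. As a first step I would verify directly --- arguing with closure under faces and joins exactly as in \ref{prop.nerve-cosegal} and \ref{prop.simplices-nerve} --- that $N(I)$ is a $d$-uple Segal set, that it meets the essential-constancy conditions defining a $d$-fold Segal set, and that $[I]\to N(I)$ is natural in $I$.

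For completeness, note that since $N(I)$ is discrete the completeness square appearing in the definition of a complete ($d$-fold) Segal space is a pullback of sets, which one checks by induction on $d$ to reduce to the statement that the $d$-category $c(I)$ has no nontrivial invertible cells. This last point is forced by the rigidity of pasting shapes: every box $(x,y)$ of $I$ has $x\leq y$ coordinatewise in $\N^{\times d}$, so the relations imposed in forming $c(I)$ can never produce a nontrivial automorphism or an oriented cycle. In the base case $d=1$ the shape $I$ is a subposet of $\N$ and $N(I)=[I]$ is the nerve of a poset, hence complete; the inductive step uses that $c(I)_{1,\bullet,\dots,\bullet}$ is again of the same gaunt type in one lower dimension.

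The substantive part, and the step I expect to be the main obstacle, is showing that $[I]\to N(I)$ is inverted by $\ell$, equivalently that $\ell[I]$ is already discrete and hence equal to $N(I)$. I would argue by induction on $d$, and within a fixed $d$ by induction on the complexity of $I$, using the machinery of \ref{section.proof} (coverings, decompositions, \ref{prop.pushout-seq}, \ref{lemma.unions-seq}, and ultimately \ref{thm.pasting-theorem}) to reduce to cells and standard grids, where the computation can be done by hand. The delicate issue is the interplay between the Segal localization in direction $1$ and the essential-constancy (``folding'') localizations in directions $2,\dots,d$: making a simplicial object essentially constant is a Bousfield localization that is \emph{not} a transfinite composite of pushouts of monomorphisms, so \ref{prop.pushout-seq} does not govern it, and in general forcing essential constancy on the nerve of a $1$-category replaces it by its classifying space, which need not be discrete. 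The way out should be that for a pasting shape $I$ the slices that actually get collapsed --- namely $[I]_{0,\bullet,\dots,\bullet}$ and its recursive analogues --- are nerves of posets that are disjoint unions of subposets of $\N$, hence have discrete classifying spaces; pinning down the right inductive statement that carries this discreteness through the alternation of Segal- and constancy-localizations, likely by passing to the model-categorical presentations mentioned in the conventions, is where the real work lies. Once $\ell[I]$ is known to be discrete, identifying it with the explicitly constructed $N(I)$ is just a comparison of universal properties, and the conjecture follows.
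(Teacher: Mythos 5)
The statement you are proving is stated in the paper as \ref{conj.categorical-nerve}, i.e.\ as a \emph{conjecture}: the paper offers no proof of it, only the heuristic that the coherence data of maps out of $[I]$ into a $d$-fold Segal space ``should'' be encoded by a discrete gaunt object with simplices described as in \ref{prop.simplices-nerve}. So there is no argument in the paper to compare yours against; the relevant question is whether your proposal closes the gap the paper leaves open. It does not. Your plan is reasonable and your diagnosis of the difficulty is accurate, but the decisive step --- showing that the reflection $\ell[I]$ is discrete, equivalently that the comparison map $[I]\to N(I)$ is inverted by $\ell$ --- is exactly the content of the conjecture, and you explicitly defer it (``where the real work lies''). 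Everything else in your sketch (the recursive construction of $c(I)$, the flag description of its simplices, gauntness from the coordinatewise order on $\N^{\times d}$, completeness of a discrete gaunt object) is the comparatively routine part; none of it touches the obstruction you yourself identify, namely that the essential-constancy localization is not generated by monomorphisms and so is not controlled by \ref{prop.pushout-seq}, \ref{lemma.unions-seq}, or the pasting theorem, all of which concern only the Segal localization.

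Concretely, the missing ingredient is an inductive statement of the form: for every $d$-dimensional pasting shape $I$, the $d$-uple simplicial space $\ell[I]$ has discrete spaces of cells in every multidegree, proved in a way that survives the alternation of Segal- and constancy-localizations across the $d$ simplicial directions. Your proposed reduction to cells and standard grids via coverings only applies to \emph{locally composable} shapes (the hypotheses of \ref{thm.pasting-theorem}), whereas the conjecture is asserted for \emph{arbitrary} pasting shapes --- including, e.g., the pinwheel --- so even the reduction step is unavailable in the stated generality. Until you can either (a) prove the discreteness claim directly, say by exhibiting $N(I)$ as local with respect to the generating maps of the reflection onto $\Seg^d(\S)$ and showing $[I]\to N(I)$ lies in the saturation of those maps, or (b) restrict the statement to a class of shapes where the covering machinery applies and carry out the base cases, what you have is a program rather than a proof.
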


\begin{remark}
	Complete and discrete $d$-fold Segal spaces correspond to \textit{gaunt $d$-categories} on account of \cite[Corollary 12.3]{BarwickSchommerPries}. Gaunt $d$-categories are $d$-categories in which every invertible cell is an identity, see also \cite[Section 2]{BarwickSchommerPries}. 
\end{remark}

\begin{remark}
	As a stepping stone towards proving \ref{conj.categorical-nerve}, one could first consider the case that $d=2$. Then 
	a $2$-fold Segal space is precisely a $2$-uple Segal space where all vertical arrows are degenerate. Pictorially, if one starts with a 2-dimensional pasting shape $I$ 
	that is represented by a graph as in \ref{ex.pshapes-graph}, then the 2-categorical nerve of $I$ would be the 2-category that is represented by the graph (with colorings) 
	that is obtained by contracting the vertical edges. For instance, one can compute that 
	\[
		\left\{\begin{tikzcd}[execute at end picture={
			\scoped[on background layer]
			\fill[pamblue, opacity = 0.5] (a.center) -- (b.center) -- (d.center) -- (c.center);
		}]
			|[alias=a]|(0,0) \arrow[rr]\arrow[d] && |[alias=b]|(2,0)\arrow[d]  \\
			(0,1) \arrow[r]\arrow[d] & (1,1) \arrow[d]\arrow[r] & (2,1)\arrow[d] \\
			|[alias=c]|(0,2) \arrow[r] & (1,2) \arrow[r] & |[alias=d]|(2,2)
		\end{tikzcd}\right\} \simeq 
		\begin{tikzcd}[column sep = large]
			0\arrow[rr, bend left = 50, ""'name=02]\arrow[r, bend right = 70, ""name=01_1] \arrow[r, ""'name=01_0] &|[""name=1]| 1\arrow[r, bend right = 70, ""name=12_1] \arrow[r, ""'name=12_0] & 2. 
			\arrow[from=02,to=1, end anchor={[yshift=-8pt]}, Rightarrow] 
			\arrow[from=01_0, to=01_1, Rightarrow]
			\arrow[from=12_0, to=12_1, Rightarrow]
		\end{tikzcd}
	\]
\end{remark}

Combining the conjecture with the pasting theorem we have proven in this article,  we would gain a meaningful pasting theorem for $(\infty,d)$-categories:

\begin{theorem}
	Suppose that \ref{conj.categorical-nerve} holds. 
	Let $I$ be a $d$-dimensional locally composable pasting shape covered by subshapes $I_1, \dotsc, I_n$. Then $I$ can be written as 
	$$
	I = \bigcup_{i=1}^n I_i,
	$$
	and this union is preserved by the $d$-categorical nerve functor so that the canonical map
	$$
\textstyle \colim_{J \in \mathcal{P}(\{I_1, \dotsc, I_n\})} \{J\} \rightarrow  \{I\}
$$
	is an equivalence of $d$-fold Segal spaces between complete and discrete $d$-fold Segal spaces. In particular, it is an equivalence of 
	$(\infty,d)$-categories.
\end{theorem}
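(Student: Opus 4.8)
The plan is to deduce this from the pasting theorem \ref{thm.pasting-theorem} essentially formally, using that the $d$-categorical nerve factors through a left adjoint. Recall that, by construction, $\{-\}$ is the composite
$$
\{-\} : \Shape^d \xrightarrow{[-]} \Cat^d(\S) \xrightarrow{L} \Seg^d(\S),
$$
where $L$ is the reflector onto the full subcategory $\Seg^d(\S) \subset \Cat^d(\S)$ exhibited above. Being a left adjoint, $L$ preserves all colimits. On the other hand, \ref{thm.pasting-theorem} tells us that $I = \colim_{J \in \mathcal{P}(\{I_1, \dotsc, I_n\})} J$ in $\Shape^d$ (the union $\bigcup_i I_i$, described as this colimit), and that the ordinary nerve functor $[-]$ preserves this colimit, i.e.\ the comparison map $\colim_J [J] \to [I]$ is an equivalence in $\Cat^d(\S)$. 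Applying $L$ then yields at once that
$$
\colim_{J \in \mathcal{P}(\{I_1, \dotsc, I_n\})} \{J\} \longrightarrow \{I\}
$$
is an equivalence in $\Seg^d(\S)$; this already establishes that the union $I = \bigcup_i I_i$ is preserved by the $d$-categorical nerve functor.

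It then remains only to interpret the above colimit inside $\Cat_{(\infty,d)}$. This is where \ref{conj.categorical-nerve} enters: it guarantees that $\{I\}$, as well as each $\{J\}$ for $J \in \mathcal{P}(\{I_1, \dotsc, I_n\})$ — note that every such $J$ is again a $d$-dimensional pasting shape — is a complete and discrete $d$-fold Segal space, hence an object of $\Cat_{(\infty,d)}$. Since $\Cat_{(\infty,d)} \subset \Seg^d(\S)$ is a full subcategory (and, by the argument of \cite[Lemma 14.2]{BarwickSchommerPries}, even reflective), and the colimit $\colim_J \{J\}$ computed in $\Seg^d(\S)$ is already equivalent to the object $\{I\}$ lying in $\Cat_{(\infty,d)}$, this colimit also computes the colimit in $\Cat_{(\infty,d)}$. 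Therefore the displayed comparison map is an equivalence of $(\infty,d)$-categories, and by \cite[Corollary 12.3]{BarwickSchommerPries} also an equivalence of gaunt $d$-categories.

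I expect there to be no genuinely hard step: all the work is already contained in \ref{thm.pasting-theorem} and in \ref{conj.categorical-nerve}. The only point requiring a line of care is the last one — that a colimit taken in $\Seg^d(\S)$ which happens to land in the full subcategory $\Cat_{(\infty,d)}$ is also the colimit computed there — which follows immediately from fullness of the inclusion (for any $Z \in \Cat_{(\infty,d)}$ the mapping space out of the colimit agrees with the limit of the mapping spaces, whether computed in $\Seg^d(\S)$ or in $\Cat_{(\infty,d)}$), once one has observed that \ref{conj.categorical-nerve} applies to all the pasting shapes $J$ appearing in the indexing poset $\mathcal{P}(\{I_1, \dotsc, I_n\})$, not merely to the locally composable ones.
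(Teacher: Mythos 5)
Your proposal is correct and follows essentially the same route as the paper: the main equivalence is obtained by applying the colimit-preserving reflector $L : \Cat^d(\S) \rightarrow \Seg^d(\S)$ to the conclusion of \ref{thm.pasting-theorem}, and the final assertion follows because the fully faithful inclusion $\Cat_{(\infty,d)} \rightarrow \Seg^d(\S)$ reflects colimits once \ref{conj.categorical-nerve} places all the objects $\{J\}$ and $\{I\}$ in $\Cat_{(\infty,d)}$. Your added remark that the conjecture must be invoked for every $J \in \mathcal{P}(\{I_1,\dotsc,I_n\})$, not only for locally composable shapes, is a correct and worthwhile point of care that the paper leaves implicit.
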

\begin{proof}
	The main part follows from \ref{thm.pasting-theorem} and the fact that 
	left adjoints preserve colimits. The final assertion follows from the fact that the inclusion $\mathrm{Cat}_{(\infty,d)} \rightarrow \Seg^d(\S)$ is fully faithful, and hence 
	reflects colimits.
\end{proof}

\nocite{*}
\bibliographystyle{amsalpha}
\bibliography{Segal}

\end{document}